\author{Matthew D. Kvalheim}
\address[Kvalheim]{School of Engineering and Applied Science, University of Pennsylvania, Philadelphia, PA 19104, USA}
\author{Anthony M. Bloch}
\address[Bloch]{Department of Mathematics, University of Michigan, Ann Arbor, MI 48109, USA}
\email{kvalheim@seas.upenn.edu, abloch@umich.edu}
\title{Families of periodic orbits: closed 1-forms and global continuability}
\newcommand{\concept}[1]{\textit{#1}}
\newcommand{\N}{\mathbb{N}}
\newcommand{\Z}{\mathbb{Z}}
\newcommand{\R}{\mathbb{R}}
\newcommand{\C}{\mathbb{C}}
\newcommand{\slot}{\,\cdot\,} 
\newcommand{\T}{\mathsf{T}}
\newcommand{\D}{\mathsf{D}}
\newcommand{\id}{\textnormal{id}}
\newcommand{\interior}{\textnormal{int}}
\newcommand{\cl}{\textnormal{cl}}
\newcommand{\x}{\mathbf{x}}
\newcommand{\vv}{\mathbf{v}}
\newcommand{\1}{\mathbf{1}}
\newcommand{\cf}{\eta}
\newcommand{\dom}{\textnormal{dom}}
\newcommand{\domcf}{\textnormal{dom}(\cf)}
\newcommand{\Hdr}{H_{\textnormal{dR}}}
\newcommand{\cc}{\mathcal{C}}
\newcommand{\dco}{d_{C^1}}
\newcommand{\K}{\mathcal{K}}
\newcommand{\SO}{\mathsf{SO}}
\newcommand{\GL}{\mathsf{GL}}
\DeclarePairedDelimiter\norm{\lVert}{\rVert}
\newtheorem{Lem}{Lemma}
\newtheorem{Th}{Theorem}
\newtheorem{Co}{Corollary}
\newtheorem{Prop}{Proposition}
\newcommand{\thistheoremname}{}
\newtheorem*{genericthm}{\thistheoremname}
{\renewcommand{\thistheoremname}{Theorem~\ref{#1}$'$}%
	\begin{genericthm}}
	{\end{genericthm}}
\theoremstyle{definition}
\newtheorem{Def}{Definition}
\newtheorem*{Def*}{Definition}
\newtheorem{Rem}{Remark}
\begin{document}
	
	\maketitle
	\begin{abstract}	
	We investigate global continuation of periodic orbits of a differential equation depending on a parameter, assuming that a closed 1-form satisfying certain properties exists. We begin by extending the global continuation theory of Alexander, Alligood, Mallet-Paret, Yorke, and others to this situation, formulating a new notion of global continuability and a new global continuation theorem tailored for this situation. In particular, we show that the existence of such a 1-form ensures that local continuability of periodic orbits implies global continuability. Using our general theory, we then develop continuation-based techniques for proving the existence of periodic orbits. In contrast to previous work, a key feature of our results is that existence of periodic orbits can be proven (i) without finding trapping regions for the dynamics and (ii) without establishing a priori upper bounds on the periods of orbits. We illustrate the theory in examples inspired by the synthetic biology literature.	
	\end{abstract}	
	
	\tableofcontents
	
	\section{Introduction}\label{sec:intro}
    In this paper, we study families of periodic orbits of a $C^1$ autonomous ordinary differential equation (ODE) with one parameter 
	\begin{equation}\label{eq:ode-intro}
	\dot{x}=f(x,\mu) \eqqcolon f_\mu(x),\qquad (x,\mu)\in Q\times \R
	\end{equation} 
	on a smooth manifold $Q$.
	Our primary contributions are (i) a theorem on the \concept{global continuation} of periodic orbits as the parameter is varied and (ii) theorems on existence of periodic orbits based on our global continuation theory.   
	A key hypothesis for our theorems is the existence of a closed differential 1-form $\cf$ on $Q\times \R$ satisfying certain properties.
	(Appendix~\ref{app:closed-1-forms} recalls some standard results concerning closed $1$-forms.)
	
	Several authors have previously studied the global continuation of periodic orbits of \eqref{eq:ode-intro}.
	Some important early efforts are represented by \cite{fuller1967index,alexander1978global,chow1978fuller}. 
	These authors study connected components of periodic orbits in $(x,\mu,\tau)$-space, where $\tau$ is the period of a periodic orbit.
	Subsequently several authors showed that more refined information could be obtained by studying components of periodic orbits in $(x,\mu)$-space using other techniques \cite{alligood1981families,mallet1982snakes,chow1983periodic,alligood1983index,alligood1984families}. 
	We mention also \cite{fiedler1988global} who refined and extended many of these global continuation results to families of differential equations which are equivariant under certain groups of symmetries.
	
	The motivation for the present paper was to obtain useful techniques for proving existence of periodic orbits for concrete ODEs.
	In particular, the results in this paper grew out of our attempts to prove existence of periodic orbits for the following ODE
	\begin{equation}\label{eq:sprott-intro}
	\begin{split}
	\dot{x} &= y^2 - z - \mu x\\
	\dot{y} &= z^2 - x - \mu y\\
	\dot{z} &= x^2 - y - \mu z
	\end{split}
	\end{equation}
	on $\R^3$ which depend on the parameter $\mu\in \R$.   
	The system without damping ($\mu = 0$) was considered by Sprott \cite[Eq.~4.7]{sprott2010elegant} as an example of an ``elegant chaotic'' system, so we refer to \eqref{eq:sprott-intro} as the ``Sprott system''; Figure \ref{fig:sprott-traj} displays some of its intrinsically rich dynamical structure.		
   	Our interest in this system was originally inspired by various systems that have been analyzed in the synthetic biology literature such as the \concept{repressilator} and its generalizations, see e.g. 
   	\cite{elowitz2000synthetic, mallet1990poincare, rajapakse2017mathematics, ronquist2017algorithm}. 
   	The repressilator is a model of a synthetic genetic regulatory network consisting of a ring oscillator, and a reduced-order model for this system is given \cite{buse2009existence,buse2010dynamical} by the ODE     \begin{equation}\label{eq:repressilator-intro}
    \begin{split}
    \dot{x} &= \frac{\mu}{1+y^s} - x\\
    \dot{y} &= \frac{\mu}{1+z^s} - y\\
    \dot{z} &= \frac{\mu}{1+x^s} - z
    \end{split}
    \end{equation}	
    on $\R^3$, where $s> 2$ and $\mu>0$ are parameters.    	
    Both \eqref{eq:sprott-intro} and \eqref{eq:repressilator-intro} are symmetric with respect to the cyclic permutation $(x,y,z)\mapsto (y,z,x)$ (see \cite{maria2006homogeneous} for other work on cyclic systems).
    However, in many ways \eqref{eq:sprott-intro} is more subtle to analyze, and many of the standard techniques applied to such systems fail.
    For example, the periodic orbit existence proof for \eqref{eq:repressilator-intro} in \cite{buse2009existence} does not work for \eqref{eq:sprott-intro}; additionally, \eqref{eq:repressilator-intro} has the structure of a \concept{monotone cyclic feedback system} \cite{mallet1990poincare} while \eqref{eq:sprott-intro} does not.    
	Using a single technique based on our results we give proofs that both \eqref{eq:sprott-intro} and \eqref{eq:repressilator-intro} have nonstationary periodic orbits for all $\mu \in (-0.25,0.5)$ and all $\mu \in (\mu_c(s),\infty)$, respectively, where $s> 2$ and $\mu_c(s)$ is a certain parameter value at which a Hopf bifurcation for \eqref{eq:repressilator-intro} occurs.
	While the result for \eqref{eq:repressilator-intro} was already established in \cite{buse2009existence}, our proof is new, and the result we establish for \eqref{eq:sprott-intro} appears to be the first of its kind.	
	
	\begin{figure}
		\centering
		\includegraphics[width=0.32\linewidth]{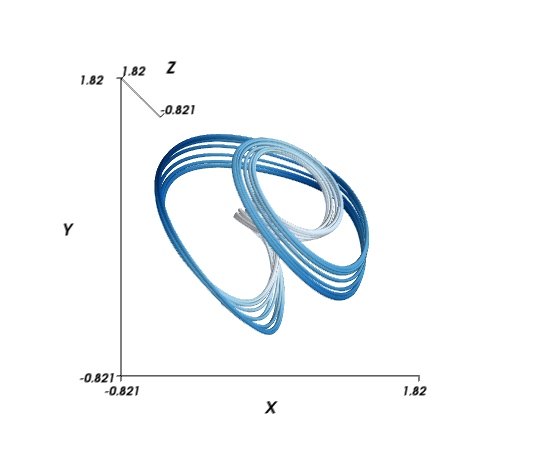}
		\includegraphics[width=0.32\linewidth]{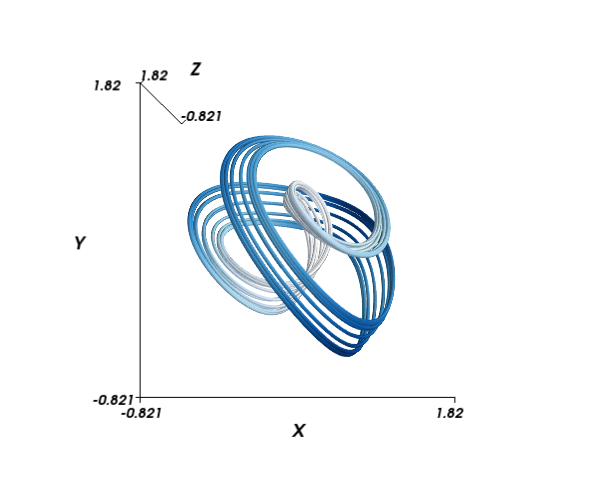}
		\includegraphics[width=0.32\linewidth]{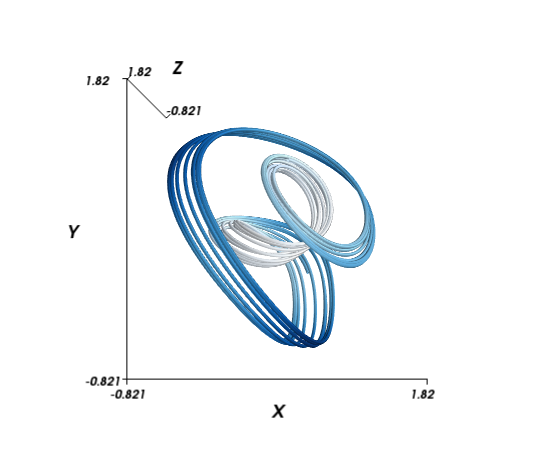}
		\includegraphics[width=0.32\linewidth]{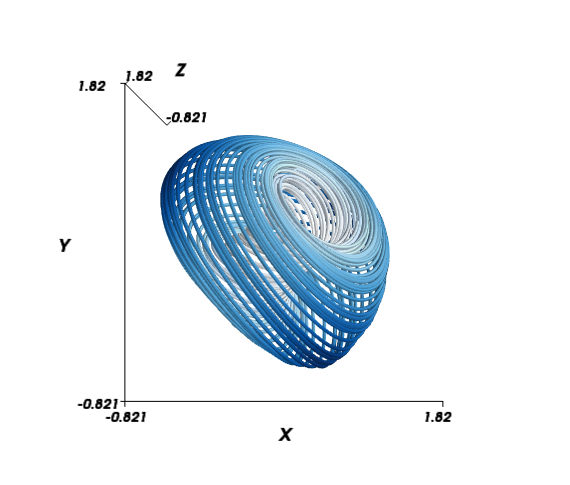}
		\includegraphics[width=0.32\linewidth]{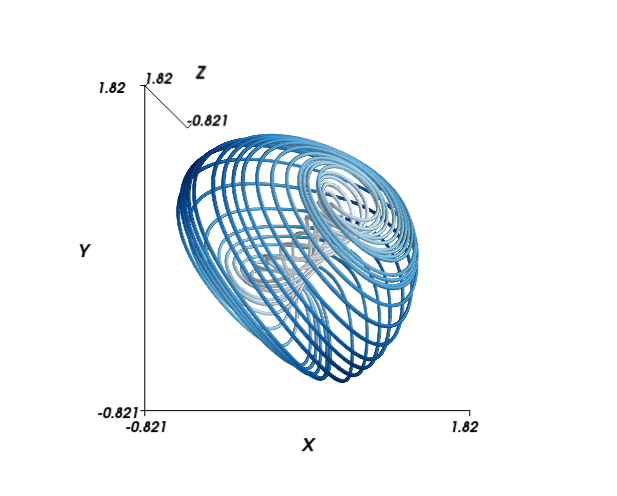}		
		\caption{Shown here are trajectory segments (each of length $150$ time units) of the Sprott system \eqref{eq:sprott-intro} for $\mu = 0$. Each of the top three figures consists of a single trajectory segment, with initial condition $(x_0,y_0,z_0)$ given from left to right by: $(1.2,0.7,0.6)$, $(0.7,0.6,1.2)$, $(0.6,1.2,0.7)$.
		These three trajectory segments are superimposed in the bottom left figure. The bottom right figure consists of a single trajectory segment with initial condition $(x_0,y_0,z_0) = (0.3,0.2,-0.3)$.
		Light portions of trajectory segments indicate where the sum $x+y+z$ is decreasing as a function of time, and dark segments indicate where $x+y+z$ is increasing.
		As an application of our theory, in \S \ref{sec:examples} we prove that this dynamical system has a nonstationary periodic orbit  (see Theorem \ref{th:sprott}).}\label{fig:sprott-traj}	
	\end{figure}

	Perhaps the most famous technique to prove that periodic orbits exist is the Poincar\'{e}-Bendixson theorem \cite{poincare1881memoire,bendixson1901courbes} for autonomous ODEs on the plane.
	More recently, some authors have proven existence theorems for $n$-dimensional ODEs by finding conditions under which an $n$-dimensional system can be projected onto a two-dimensional one so that the Poincar\'{e}-Bendixson theorem can be applied \cite{grasman1977periodic,smith1980existence}.
	Another example of this approach includes a Poincar\'{e}-Bendixson theorem for the class of monotone cyclic feedback systems \cite{mallet1990poincare} which is relevant for various applications in biology; in particular, this theorem yields an alternative proof that the repressilator \eqref{eq:repressilator-intro} has periodic orbits.
	There is also a rich literature on periodic orbit existence for Hamiltonian systems; we mention \cite{rabinowitz1978periodic,weinstein1979hypotheses} as notable examples, and also the solution \cite{conley1983birkhoff,conley1984morse,floer1989symplectic} of the celebrated Arnold conjecture \cite{zehnder1986arnold,zehnder2019beginnings}. 
	For the case of general $n$-dimensional ODEs, the ``torus principle'' \cite{li1981periodic} based on Brouwer's fixed point theorem is widely used to prove existence of periodic orbits; application of this principle is made easier by recent work of Brockett and Byrnes \cite{byrnes2007differential,byrnes2010topological} which utilizes Lyapunov 1-forms \cite{farber2003smooth,farber2004lyapunov,farber2004topology}, results on the topology of Lyapunov function level sets \cite{wilson1967structure}, and various advances in topology including the solution of the Poincar\'{e} conjecture \cite{morgan2007ricci}.	
    The torus principle is generalized by periodic orbit existence theorems based on the Conley and Lefschetz indices, which allow the toroidal trapping region to be replaced with an isolating neighborhood having the Conley index of a hyperbolic periodic orbit \cite{mccord1995zeta,conley1978isolated}; one body of work has focused on rigorous computer-assisted periodic orbit existence proofs based on these topological results \cite{pilarczyk1999computer,baker2005topological}, with applications including the aforementioned class of monotone cyclic feedback systems as well as more general cyclic systems \cite{gedeon1995structure}.

	In this paper, we are interested in proving existence of periodic orbits for families of ODEs depending on a parameter, but the existence results just mentioned are formulated for a single ODE.
	Additionally, applying these existence results is often easier said than done, and we experienced difficulties in applying these results to the Sprott system \eqref{eq:sprott-intro}: for example, we were unable to find ``by hand'' a toroidal trapping region or suitable Conley index pair to prove periodic orbit existence for \eqref{eq:sprott-intro}; equation \eqref{eq:sprott-intro} does not satisfy the ``point-dissipative'' or ``ultimately bounded'' hypothesis of \cite[Thm~4.3]{byrnes2010topological}; and as previously mentioned the Poincar\'{e}-Bendixson theorem for monotone cyclic feedback systems does not apply.
	Inspired by a suggestion of Rajapakse and Smale \cite[p.~1214]{rajapakse2017mathematics}, we set out to find continuation-based techniques to prove existence results---tailored to parametric families of ODEs---which do not require finding trapping regions or index pairs, and which therefore might prove easier to apply to systems such as  \eqref{eq:sprott-intro}.
	We found that one difficulty in using the previously mentioned continuation results \cite{fuller1967index,alexander1978global,mallet1982snakes,chow1983periodic,alligood1983index,alligood1984families,fiedler1988global} to prove existence is that a priori upper bounds on the periods (or \concept{virtual periods}, to be defined in \S \ref{sec:previous-continuation-results}) of periodic orbits of \eqref{eq:ode-intro} are required, and it seems that there are few general techniques to obtain such bounds.	
	However, we show that the existence of a closed 1-form $\cf$ on $Q\times \R$ satisfying certain properties enables a priori period upper bounds to be replaced with conditions such as $\cf((f,0))> 0 $ which are in principle computable.\footnote{Note that $\cf$ satisfying this last condition can be viewed as a Lyapunov 1-form in the sense of \cite{farber2003smooth,farber2004lyapunov,farber2004topology}.}  
	Our first such existence result is Theorem \ref{th:existence}, stated in \S \ref{sec:main-results}.
	Using Theorem \ref{th:existence} we also prove a rather specific existence result in Theorem \ref{th:hopf}, which we use in our applications.
	These theorems are essentially corollaries of our most general result, Theorem \ref{th:main-thm}.
	
	We state our main results in \S \ref{sec:main-results}.
	In order to motivate the statement of our results, in \S \ref{sec:previous-continuation-results} we first discuss in more detail related work of \cite{mallet1982snakes,alexander1983continuability,alligood1984families}.
	In the sequel, for notational simplicity we often identify the image $\Gamma$ of a periodic orbit $\gamma$ of $f_\mu$ with the set $\Gamma \times \{\mu\} \subset Q\times \R$ when there is no risk of confusion.	
	
	\subsection{Discussion of related continuation results}\label{sec:previous-continuation-results}
	Our first main result (Theorem \ref{th:main-thm}) concerns \concept{global continuability}. 
	Multiple notions of global continuability have appeared in the literature; the following definition of \concept{P-global continuability} (called global continuability in \cite{alligood1981families,alexander1983continuability}) is essentially taken from \cite{alligood1983index,alligood1984families}. 
	
	\begin{Def}[P-global continuability]\label{def:p-global-cont}
		Let $A\subset Q \times \R$ be a connected component of nonstationary periodic orbits of \eqref{eq:ode-intro}, and let $\gamma$ be a periodic orbit with image $\Gamma\subset A$.
		We say that $\gamma$ is \concept{P-globally continuable} if at least one of the following holds.
		\begin{itemize}
			\item $A\setminus \Gamma$ is connected,
		\end{itemize}
		or each connected component $A^i$ of $A\setminus \Gamma$ satisfies one of the following:
		\begin{enumerate}
			\item $A^i$ is not contained in any compact subset of $Q\times \R$,
			\item the closure $\cl(A^i)$ of $A^i$ in $Q\times \R$ contains a generalized center (i.e., a stationary point $(x,\mu)$ such that $\D_{x} f_{\mu}$ has some purely imaginary eigenvalues), or 
			\item the periods of orbits in $A^i$ are unbounded.
		\end{enumerate}		
	\end{Def}	
	Mallet-Paret and Yorke considered a certain ``generic'' subset (i.e., containing a residual subset) $\K$ of families \eqref{eq:ode-intro}---discussed in more detail in \S \ref{sec:background-generic-gamilies}---and proved several results involving the continuation of periodic orbits \cite{mallet1982snakes}.
	Necessary for the statement of these results is the concept of a \concept{M\"{o}bius orbit}, which is a periodic orbit having an odd number of Floquet multipliers in $(-\infty,1)$ and no multipliers equal to $-1$.
	The following result is a special case of \cite[Thm~4.2]{mallet1982snakes}; a direct proof appears in \cite[Thm~2.2]{alligood1983index}.
	\begin{Prop}[Mallet-Paret and Yorke]\label{prop:p-global-cont}
    	Let $f\in  \K$ be a generic family of vector fields.
    	Let $\gamma$ be a periodic orbit of some $f_{\mu_0}$.
    	Assume that $\gamma$ is not a M\"{o}bius orbit, and assume that  $\pm 1$ are not Floquet multipliers of $\gamma$. 
    	Then $\gamma$ is P-globally continuable.			
	\end{Prop}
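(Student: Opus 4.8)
The plan is to deduce this from the global continuation machinery of Mallet-Paret--Yorke \cite[Thm~4.2]{mallet1982snakes}, of which the statement is a special case (a more self-contained argument appears in \cite[Thm~2.2]{alligood1983index}); I will argue by contradiction, using the genericity hypothesis $f\in\K$ together with the \concept{orbit index} $\sigma$ of \cite{mallet1982snakes,alligood1983index}. Recall that $\sigma$ assigns to every periodic orbit with no Floquet multiplier equal to $\pm 1$ an integer which (i) is locally constant on the set of such orbits, (ii) obeys a conservation law across the codimension-one bifurcations occurring in a $\K$-generic family---saddle-node bifurcations of periodic orbits (a multiplier crossing $+1$), period-doublings (a multiplier crossing $-1$, with the bifurcating branch of doubled orbits included in the bookkeeping), and Hopf bifurcations at generalized centers---and (iii) is nonzero whenever the orbit is not a M\"{o}bius orbit.

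First I will record the local picture near $\Gamma$. Because $\pm 1$ are not Floquet multipliers of $\gamma$, the linearized Poincar\'{e} return map of $\gamma$ (with respect to $f_{\mu_0}$) has no eigenvalue $1$, so by the implicit function theorem there is, for each $\mu$ near $\mu_0$, a unique nearby periodic orbit $\gamma_\mu$ of $f_\mu$ depending smoothly on $\mu$; hence a neighborhood of $\Gamma$ in the component $A$ is a smoothly embedded cylinder $\Gamma\times(-\varepsilon,\varepsilon)$, and $\Gamma$ locally separates $A$ into the two half-cylinders $\Gamma\times(0,\varepsilon)$ and $\Gamma\times(-\varepsilon,0)$. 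If $A\setminus\Gamma$ is connected then $\gamma$ is P-globally continuable and we are done, so I may assume $A\setminus\Gamma$ is disconnected. Let $A^1$ be any connected component of $A\setminus\Gamma$; since $A$ is connected and, by the cylinder structure and $\K$-genericity, locally connected, the frontier of $A^1$ in $A$ is nonempty and contained in $\Gamma$, whence $\cl(A^1)=A^1\cup\Gamma$. Suppose, toward a contradiction, that $A^1$ satisfies none of the alternatives (1)--(3) of Definition~\ref{def:p-global-cont}: it lies in a compact set $K\subset Q\times\R$, the periods of its orbits are bounded by some $T<\infty$, and $\cl(A^1)$ contains no generalized center.

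Next I will observe that $\cl(A^1)$ is compact and consists of nonstationary periodic orbits of period at most $T$: a Hausdorff limit of periodic orbits of uniformly bounded period cannot be a homoclinic-type loop (these have infinite period), while a stationary point appearing in such a limit would, for $f\in\K$, force a Hopf bifurcation point---a generalized center---into $\cl(A^1)$, contrary to assumption. By $\K$-genericity, $\cl(A^1)$ then contains no Hopf point and, being compact, only finitely many saddle-node and period-doubling orbits; away from these it is a one-parameter family of nondegenerate periodic orbits whose only ``free boundary'' is a collar about $\Gamma$. The orbit index $\sigma$ is defined on this family: by (i) it is locally constant there, and on the collar it equals $\sigma(\gamma)$ by continuity; applying the conservation law (ii) at each of the finitely many saddle-node and period-doubling orbits---noting that the period-doubled branches sprouting off remain inside the compact region, have period at most $T$, and so enter the bookkeeping rather than escaping---and summing $\sigma$ over $\cl(A^1)$ forces the net index carried by the free boundary collar to vanish, i.e.\ $\sigma(\gamma)=0$. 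But $\gamma$ is not a M\"{o}bius orbit, so $\sigma(\gamma)\neq 0$ by (iii), a contradiction. Hence every connected component of $A\setminus\Gamma$ satisfies one of (1)--(3), and $\gamma$ is P-globally continuable. I expect the main obstacle to be property (ii), specifically the bookkeeping at period-doublings: one must make precise how $\sigma$ jumps at a period-doubling while respecting the period bound (the doubled branch has roughly twice the period) and rule out the ``snake'' configurations of \cite{mallet1982snakes} inside the compact region---and it is exactly there that the full strength of the genericity hypothesis $f\in\K$, together with the compactness and period constraints on $A^1$, must be used.
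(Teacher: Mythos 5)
First, note that the paper does not prove Proposition~\ref{prop:p-global-cont} at all: it is quoted as a special case of \cite[Thm~4.2]{mallet1982snakes}, with a direct proof in \cite[Thm~2.2]{alligood1983index}. So the only meaningful question is whether your reconstruction of that cited argument is sound, and as written it has a genuine gap exactly where you flag it. The properties (i)--(iii) of the orbit index that you list are correct, and the reduction is fine up to and including the observation that $\cl(A^1)$ consists of nonstationary periodic orbits of uniformly bounded period with only finitely many type~1 and type~2 orbits (the ``stationary limit forces a generalized center'' step does not even need genericity; it is the bounded-period argument of \cite{chow1983periodic,mallet1982snakes} that this paper also invokes). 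The problem is the final step: ``summing $\sigma$ over $\cl(A^1)$'' is not a well-defined operation ($\cl(A^1)$ is a continuum of orbits), and the natural way to make it precise --- summing orbit indices over a parameter slice of $A^1$ and letting the slice move --- does not yield $\sigma(\gamma)=0$: a slice sum can vanish by cancellation among several orbits of $A^1$ at the same parameter value, so constancy of the slice sum gives no contradiction with $\sigma(\gamma)\neq 0$. The rigorous versions of the bookkeeping are either the ``snake''-following construction of \cite{mallet1982snakes} (one follows a path through the orbit diagram, with a prescribed rule for passing through type~1 turning points and for switching onto the period-doubled branch at type~2 vertices, and shows such a path cannot terminate inside a compact, bounded-period, center-free set) or the index of relatively isolated sets of orbits in \cite{alligood1983index}. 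Both of these are precisely the content of the theorems you are citing, so your sketch in effect assumes the result it sets out to prove; the admitted ``main obstacle'' is not a technical footnote but the entire proof.

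Two smaller points. The claim that $A$ is locally connected (used to get $\cl(A^1)=A^1\cup\Gamma$) is not automatic even for $f\in\K$, because orbits of very long period in the same component can accumulate on $\Gamma$; the correct statement, which suffices, is that $\cl(A^1)\cap A\subset A^1\cup\Gamma$ (components of $A\setminus\Gamma$ are closed in $A\setminus\Gamma$), together with the bounded-period/no-center argument excluding limit points outside $A$. Second, be careful with the local picture at a type~2 orbit: the period-doubled branch attached to it lies in $A^1$ and has period bounded by roughly twice that of the short branch, so ``period at most $T$'' must be read as a bound coming from the hypothesis on $A^1$ itself, not propagated through the bifurcation --- this is harmless here but matters if one tries to run the snake argument, where the jump in period at type~2 vertices is exactly the phenomenon (cf.\ Figure~\ref{fig:lem-mfld-b} and \cite{alligood1981families}) that makes the M\"{o}bius hypothesis necessary and that the present paper's $(\cf,\ell)$-machinery is designed to sidestep.
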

	
	Although the subset $\K$ is generic, given a specific family \eqref{eq:ode-intro} it is usually difficult to determine whether this specific family belongs to $\K$ (c.f. \cite[p.~5]{sander2012connecting}).
	Therefore, it would be desirable to extend Proposition \ref{prop:p-global-cont} to a result valid for arbitrary (i.e., ``non-generic'') $C^1$ families.
	By extending to periodic orbits the notion of virtual periods, previously defined for stationary points of ODEs \cite{mallet1982snakes} and fixed points of maps \cite{chow1983periodic}, Alligood and Yorke introduced a modification of Definition \ref{def:p-global-cont} to prove such a generalization in \cite{alligood1984families}; see \cite{alligood1983index,fiedler1988global} for more general results. 
	Briefly, if $\tau$ is the minimal period of $\gamma$, then $\bar{\tau}=k\tau$ is a \concept{virtual period} of order $k\in \N_{\geq 1}$ for $\gamma$ if the linearization of a Poincar\'{e} map for $\gamma$ has a periodic point of minimal period $k$ \cite{alligood1983index,alligood1984families,fiedler1988global}.
	The following definition is essentially \cite[Def.~1.3]{alligood1984families} and is obtained from Definition \ref{def:p-global-cont} by simply replacing ``periods'' with ``virtual periods''.
	\begin{Def}[Global continuability]\label{def:global-cont}
		Let $A\subset Q \times \R$ be a connected component of nonstationary periodic orbits of \eqref{eq:ode-intro}, and let $\gamma$ be a periodic orbit with image $\Gamma\subset A$.
		We say that $\gamma$ is \concept{globally continuable} if at least one of the following holds.
		\begin{itemize}
			\item $A\setminus \Gamma$ is connected,
		\end{itemize}
		or each connected component $A^i$ of $A\setminus \Gamma$ satisfies one of the following:
		\begin{enumerate}
			\item $A^i$ is not contained in any compact subset of $Q\times \R$,
			\item the closure $\cl(A^i)$ of $A^i$ in $Q\times \R$ contains a generalized center (i.e., a stationary point $(x,\mu)$ such that $\D_{x} f_{\mu}$ has some purely imaginary eigenvalues), or 
			\item the virtual periods of orbits in $A^i$ are unbounded.
		\end{enumerate}		
	\end{Def}	
	The following result is \cite[Thm~3.1]{alligood1984families}; it generalizes Proposition \ref{prop:p-global-cont} to the case of arbitrary $C^1$ families of vector fields.
	\begin{Prop}[Alligood and Yorke]\label{prop:global-cont}
		Let $f\in  C^1(Q\times\R,\T Q)$ be a family of vector fields.
		Let $\gamma$ be a periodic orbit of some $f_{\mu_0}$.
		Assume that $\gamma$ is not a M\"{o}bius orbit, and assume that  $\gamma$ has no Floquet multipliers which are roots of unity. 
		Then $\gamma$ is globally continuable.			
	\end{Prop}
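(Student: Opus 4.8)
\emph{Proof idea.} The plan is to argue by contradiction and to reduce to the generic situation already covered by Proposition~\ref{prop:p-global-cont} by means of a $C^1$-small perturbation of the family $f$; the notion of virtual period is exactly the device that allows information to be transferred across this perturbation, and the analytic tool throughout is the fixed-point index of Poincar\'{e} return maps (equivalently, Fuller's index of isolated families of periodic orbits \cite{fuller1967index,chow1978fuller}). Suppose, for contradiction, that $\gamma$ is not globally continuable. Then $A\setminus\Gamma$ is disconnected and \emph{every} connected component $A^i$ of $A\setminus\Gamma$ fails all three alternatives of Definition~\ref{def:global-cont}: $A^i$ is contained in a compact subset of $Q\times\R$, the closure $\cl(A^i)$ contains no generalized center, and the virtual periods of orbits in $A^i$ are bounded, say by $T_i$.

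The first step is to show that $\cl(A^i)$ is compact and consists of nonstationary periodic orbits together with part of $\Gamma$; precisely, $\cl(A^i)\setminus A^i\subseteq\Gamma$. Since the minimal period of a periodic orbit is itself a virtual period of order~$1$, the minimal periods of orbits in $A^i$ lie in $(0,T_i]$. Let $(x_n,\mu_n)\in A^i$ converge to $(x_*,\mu_*)$, with $(x_n,\mu_n)$ lying on a periodic orbit of minimal period $\tau_n\le T_i$. The diameter of such an orbit is at most $\tau_n M$, where $M$ bounds $\norm{f}$ on the (compact) region in question, so if $\tau_n\to 0$ these orbits would eventually be confined to a small ball, where the classical lower bound on the period of a periodic orbit in terms of a Lipschitz constant of the vector field yields a contradiction; hence $\liminf_n\tau_n>0$, and after passing to a subsequence $\tau_n\to\tau_*\in(0,T_i]$. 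By continuity of the flow in all its arguments, these orbits (suitably reparametrized) converge uniformly to a $\tau_*$-periodic loop through $x_*$; this loop is either nonconstant, so $(x_*,\mu_*)$ lies on a nonstationary periodic orbit, or it is the constant loop $x_*$, in which case $x_*$ is an equilibrium to which these orbits shrink --- impossible for a hyperbolic equilibrium by Hartman--Grobman, and forcing a generalized center otherwise, which $\cl(A^i)$ excludes. Thus $(x_*,\mu_*)$ lies on a nonstationary periodic orbit, hence in the closure of the set $\mathcal{P}$ of points on nonstationary periodic orbits of the family; as $A$ is a connected component of $\mathcal{P}$ (hence closed in $\mathcal{P}$), $(x_*,\mu_*)\in A$, and as components of $A\setminus\Gamma$ are closed in $A\setminus\Gamma$, any such limit point not already in $A^i$ lies in $\Gamma$.

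Next, choose generic families $f^{(n)}\in\K$ with $f^{(n)}\to f$ in $C^1_{\mathrm{loc}}$. Since $\gamma$ has no Floquet multiplier equal to a root of unity, in particular $1$ is not a multiplier, so $\gamma$ is nondegenerate; by the implicit function theorem applied to a Poincar\'{e} map, for large $n$ the family $f^{(n)}$ has a nondegenerate periodic orbit $\gamma^{(n)}$ near $\gamma$ with multipliers close to those of $\gamma$, hence not a M\"{o}bius orbit and with no multiplier equal to $\pm 1$; by Proposition~\ref{prop:p-global-cont}, $\gamma^{(n)}$ is P-globally continuable. The contradiction should now come from showing that the ``trapping'' of each $A^i$ survives the perturbation, with virtual periods replaced by genuine periods: a virtual period $\bar\tau=k\tau$ of an orbit of $f$ is accompanied, under a $C^1$-small perturbation, by genuine periodic orbits of $f^{(n)}$ of period near $\bar\tau$ --- the mechanism being that the corresponding iterate of a Poincar\'{e} map carries a localized fixed-point index (after subtracting the contributions of lower iterates, à la Fuller) that is nonzero and stable under small perturbations; combined with the rigidity of the local picture near $\Gamma$ --- nondegeneracy makes $A$ an annulus $\bigcup_{|\mu-\mu_0|<\delta}\gamma_\mu$ near $\Gamma$, so $\Gamma^{(n)}$ splits the component of $f^{(n)}$ in the same way $\Gamma$ splits $A$ --- this forces some component of the perturbed periodic set minus $\Gamma^{(n)}$ to be contained in a compact set, to have closure free of generalized centers, and to have bounded periods, contradicting the P-global continuability of $\gamma^{(n)}$. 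The main obstacle is precisely this transfer: one must control how the connected components of the set of periodic orbits deform under the perturbation, and it is here that Fuller's index theory and the virtual-period bookkeeping of \cite{alligood1984families} do the essential work. A more self-contained alternative avoids the perturbation entirely and computes Fuller indices directly, exploiting that the non-M\"{o}bius hypothesis fixes the sign of the index attached to $\Gamma$ while the no-root-of-unity hypothesis makes $\Gamma$ isolated among periodic orbits.
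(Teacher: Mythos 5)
The paper does not reprove this proposition---it is quoted from Alligood--Yorke \cite[Thm~3.1]{alligood1984families}---but its argument is exactly the scheme your sketch gestures at, and it is mirrored in this paper's proof of Theorem~\ref{th:main-thm}. Measured against that, your proposal has the right skeleton (contradiction, compactness of $\cl(A^i)$, generic $C^1$-approximation $f^{(n)}\in\K$, appeal to the generic continuation result) but leaves the genuinely hard step unproven, and you say so yourself: ``the main obstacle is precisely this transfer.'' That transfer is the entire content of \cite[Lem.~3.2]{alligood1984families}. Concretely, one must build an explicit ``approximate periodicity'' functional $F(x,\mu)=\min_{\frac12 p_0\le t\le 2p_1}d(\Phi^t_\mu(x),x)$, isolate $A^i$ inside a small sublevel set $N_\epsilon$, and verify a list of quantitative conditions (a unique nondegenerate continuation $\gamma_g$ of $\gamma$ in a slice near $\Gamma$; no orbits of any $g$ with $\dco(f,g)$ small having periods in certain forbidden ranges inside $N_\epsilon$; no zeros of $f$ in $\cl(N_\epsilon)$; closeness of $F$ and its analogue $G$ for $g$) before one can conclude that the perturbed component is trapped with \emph{genuine} periods bounded, contradicting Proposition~\ref{prop:p-global-cont}. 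None of this is routine, and your sketch does not supply it. (Compare the proof of Theorem~\ref{th:main-thm} in \S\ref{sec:non-generic-families}, where precisely these conditions \ref{item:ng-1}--\ref{item:ng-min-per} and \ref{item:ng-g-cond-f-g-eps-close}--\ref{item:ng-g-cond-2} occupy most of the argument.)

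Two specific points in your sketch are also off. First, the role of virtual periods is the \emph{opposite} of the mechanism you describe: one does not need (and cannot in general have) the claim that a virtual period $k\tau$ of an orbit of $f$ forces genuine orbits of $f^{(n)}$ of period near $k\tau$ via a nonvanishing localized fixed-point index; what the argument actually uses is lower semicontinuity---if orbits of $g_n\to f$ with periods $\le T$ accumulate on an orbit (or equilibrium) of $f$, then that limit acquires a virtual period $\le T$ (respectively becomes a generalized center), see \cite[Prop.~3.2]{chow1983periodic}. It is this direction that lets the bounded-virtual-period hypothesis on $A^i$ exclude unwanted moderate-period orbits of the perturbed family near $\cl(A^i)$, and it is why the statement must be phrased with virtual rather than genuine periods at all. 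Second, in your first step the dichotomy ``hyperbolic equilibrium is excluded by Hartman--Grobman, otherwise we get a generalized center'' is too quick: a non-hyperbolic equilibrium whose only critical eigenvalue is $0$ is not a generalized center, and ruling out accumulation of bounded-period orbits on such a point again requires the virtual-period/center argument of \cite[Prop.~3.2]{chow1983periodic} (cf.\ the footnote in the proof of Lemma~\ref{lem:cf-ell-cont-generic-fam}), not just linearization. Your suggested ``self-contained alternative'' via Fuller indices is a different route in spirit (closer to \cite{alexander1978global,chow1978fuller}), but as stated it is only a remark; note that Fuller-index arguments naturally live in $(x,\mu,\tau)$-space and deliver weaker component information than the $(x,\mu)$-space statement being proved, so substantial work would be needed to make it yield Definition~\ref{def:global-cont}.
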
	
	
	The assumption that $\gamma$ is not M\"{o}bius in Proposition \ref{prop:p-global-cont} is important: as shown in \cite{alligood1981families}, there are examples of hyperbolic M\"{o}bius orbits $\gamma$ whose components $A\subset Q\times \R$ satisfy none of the conditions of either Definition \ref{def:p-global-cont} or \ref{def:global-cont}.
	In other words, such an orbit $\gamma$ is not globally continuable even if it is locally continuable (via, say, the implicit function theorem applied to a Poincar\'{e} map).
	The reason is related to the possibility that $A$ can contain branches of periodic orbits emanating from a period-doubling bifurcation at one parameter value which annihilate each other at another parameter value.	
	If $\gamma$ is M\"{o}bius, this possibility implies that the ``orbit diagram'' of $A$ (orbit diagrams are discussed in detail in \S \ref{sec:background-generic-gamilies}) can look like that of Figure \ref{fig:lem-mfld-b}, so that $A$ satisfies none of the conditions of Definitions \ref{def:p-global-cont} or \ref{def:global-cont}.
	
	For families of periodic orbits in $\R^3$, however, Alexander and Yorke \cite{alexander1983continuability} showed that, in the presence of a certain additional assumption, M\"{o}bius orbits \emph{are} globally continuable.\footnote{Without this certain additional assumption, a slightly more complicated variant of the orbit diagram in Figure \ref{fig:lem-mfld-b} can still occur; see \cite[Fig.~2.1]{alexander1983continuability}.}
	The basic idea is that, in three dimensions, linking numbers (and also, e.g., knot types) of periodic orbits provide topological obstructions to various bifurcations \cite{ghrist1993knots,ghrist1997knotsBook}, including the phenomenon of orbit annihilation following period-doubling mentioned above.
	This motivates the following basic  observation which generalizes to higher dimensions: the linking number of a periodic orbit with another submanifold of state space also provides an obstruction to the same phenomenon, as long as periodic orbits do not intersect this submanifold (so that the linking number is defined).
	Now one way to compute such a linking number is to integrate a certain closed 1-form over the periodic orbit \cite[pp.~227--234]{bott1982differential}, and in fact the preceding  observation generalizes to yield an obstruction in the situation that one has \emph{any} closed 1-form having nonzero integral over the M\"{o}bius orbit.
	This observation led to the formulations of Definition \ref{def:cf-ell-global-cont} and Theorem \ref{th:main-thm} below and is crucial to the periodic orbit existence Theorems \ref{th:existence} and \ref{th:hopf}.

    \subsection{Main results}\label{sec:main-results}    
    In this section we give statements of our main results.
    In order to state Theorem \ref{th:main-thm}, we first define our own variant of global continuability---\concept{$(\cf,\ell)$-continuability}---which is motivated by the discussion at the end of \S \ref{sec:previous-continuation-results}.
	Definition \ref{def:cf-ell-global-cont} below should be compared with the very similar Definitions \ref{def:p-global-cont} and \ref{def:global-cont} of P-global continuability and global continuability, respectively.
	
	The reader unfamiliar with closed $1$-forms might wish to consult Appendix~\ref{app:closed-1-forms} before proceeding further.
	
	In Definition \ref{def:cf-ell-global-cont} and in the rest of the paper, for each $\mu \in \R$ we let $\iota_\mu\colon Q\hookrightarrow  Q \times \R$ be the inclusion $\iota_\mu(x)=(x,\mu)$ and $\iota_\mu^*\cf$ the pullback of the 1-form $\cf$ on $Q\times \R$ by $\iota_\mu$.
	
	\begin{Def}[$(\cf,\ell)$-global continuability]\label{def:cf-ell-global-cont}
		Let $\ell > 0$, $\cf$ be a $C^1$ closed 1-form on an open subset $\domcf\subset Q\times \R$, and $A\subset \domcf$ be a connected component of nonstationary periodic orbits of $f|_{\domcf}$.
        Define $A_\ell\subset A$ to be the subset of points on periodic orbits $\alpha_\mu$ with $\left|\int_{\alpha_\mu} \iota_\mu^*\cf\right| = \ell$ and $A_{\leq \ell}$ the subset with $\left|\int_{\alpha_\mu} \iota_\mu^*\cf\right| \leq \ell$.

        Let $\gamma$ be a periodic orbit with image $\Gamma\subset A_\ell$.
		Let $\widetilde{A}_{\leq \ell}\subset A_{\leq \ell}$, $\widetilde{A}_{\ell}\subset A_\ell$ be the connected components of $A_{\leq\ell},A_\ell$ containing $\gamma$.        
        We say that $\gamma$ is \concept{$(\cf,\ell)$-globally continuable} if at least one of the following holds.		
		\begin{itemize}
			\item $\widetilde{A}_{\leq \ell} \setminus \Gamma$ is connected,
		\end{itemize}
		or each connected component $\widetilde{A}_{\leq \ell}^i$ of $\widetilde{A}_{\leq \ell}\setminus \Gamma$ containing a connected component $\widetilde{A}_{ \ell}^i$ of $\widetilde{A}_{ \ell}\setminus \Gamma$ satisfies one of the following:
		\begin{enumerate}				
			\item $\widetilde{A}^i_{\ell}$ is not contained in any compact subset of $\domcf$,
				
			\item the closure $\cl(\widetilde{A}^i_{\ell})\subset \domcf$ of $\widetilde{A}^i_{\ell}$ in $\domcf$ contains a generalized center (i.e., a stationary point $(x,\mu)$ such that $\D_{x} f_{\mu}$ has some purely imaginary eigenvalues),

			\item the periods of $\widetilde{A}^i_{\ell}$ are unbounded, or
			
			\item 	$\widetilde{A}^i_{\ell} \neq \widetilde{A}^i_{\leq \ell}$.
		\end{enumerate}		
		
	\end{Def}
	The following theorem is our most general result and should be compared with Propositions \ref{prop:p-global-cont} and \ref{prop:global-cont}.
	
    \begin{restatable}[$(\cf,\ell)$-global continuability for non-generic families]{Th}{ThmContinuability}
    	\label{th:main-thm}   	
    	Let $f\in  C^1(Q\times \R,\T Q)$ be a family of vector fields, and let $\cf$ be a $C^1$ closed 1-form on an open subset $\domcf\subset Q\times \R$.
    	Let $\gamma$ be a periodic orbit of some $f_{\mu_0}$ with image $\Gamma$ satisfying $\Gamma\times \{\mu_0\}\subset \domcf$, and assume that $\gamma$ does not have $+1$ as a Floquet multiplier.
    	Define $\ell\coloneqq \left|\int_{\gamma}\iota_{\mu_0}^*\cf\right|$, and assume $\ell > 0$.		
    	Then $\gamma$ is $(\cf,\ell)$-globally continuable.	
    \end{restatable}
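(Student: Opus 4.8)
The plan is to argue by contradiction, reduce to a ``tame'' compact piece of the family, and then derive a contradiction from (an adaptation of) the global continuation machinery of Mallet-Paret--Yorke and Alligood--Yorke; the new ingredient, which replaces their non-M\"obius hypothesis, is a rigidity property of the integral of the closed $1$-form $\cf$. That property is a Stokes-theorem observation: if $\{\alpha_s\}$ is a continuous one-parameter family of nonstationary periodic orbits in $\domcf$, with $\alpha_s$ an orbit of $f_{\mu(s)}$, then the orbits sweep out a cylinder in $\domcf$ whose boundary is $\alpha_{s_1}-\alpha_{s_0}$, so $d\cf=0$ forces $\int_{\alpha_{s_1}}\iota_{\mu(s_1)}^*\cf=\int_{\alpha_{s_0}}\iota_{\mu(s_0)}^*\cf$. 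Hence $s\mapsto|\int_{\alpha_s}\iota_{\mu(s)}^*\cf|$ is locally constant along every path of periodic orbits; moreover, when a branch of orbits of period $\approx\tau$ undergoes a period-$k$-tupling bifurcation with $k\geq 2$, the bifurcating branch --- whose orbits limit onto the $k$-fold cover of the parent --- carries integral identically equal to $k$ times that of the parent. Together with $\ell>0$ this also shows that no orbit in $A_\ell$ can shrink onto a stationary point, since such orbits have integral tending to $0$.

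Next I would pin down the local structure of the periodic-orbit set near $\Gamma$. Since $+1$ is not a Floquet multiplier of $\gamma$, $\det(\D P_{\mu_0}-I)\neq 0$ for a Poincar\'e return map $P_{\mu_0}$, so the implicit function theorem shows that near $\Gamma$ the set of periodic orbits of the family with period near $\tau$ is a single arc $\mathcal A=\{\gamma_\mu\}$ transverse to the slice $\{\mu=\mu_0\}$; the only other periodic orbits near $\Gamma$ are period-$k$-tupled ones with $k\geq 2$, which by the Stokes observation have integral $\equiv k\ell>\ell$ and so lie outside $A_{\leq\ell}$. Thus the set of periodic orbits in $A_{\leq\ell}$ near $\Gamma$ is exactly $\mathcal A$, which lies in $A_\ell$, and $\mathcal A\setminus\Gamma$ has the two components $\mathcal A_-,\mathcal A_+$ (the parts with $\mu<\mu_0$ and $\mu>\mu_0$). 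Since every connected component of $\widetilde A_{\leq\ell}\setminus\Gamma$ has closure meeting $\Gamma$, each such component contains all of $\mathcal A_-$ or all of $\mathcal A_+$; hence $\widetilde A_{\leq\ell}\setminus\Gamma$ has at most two components, and if it has exactly one then $\gamma$ is $(\cf,\ell)$-globally continuable via the first bullet of Definition~\ref{def:cf-ell-global-cont}.

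So suppose $\widetilde A_{\leq\ell}\setminus\Gamma=C_-\sqcup C_+$ with $\mathcal A_\pm\subset C_\pm$ and $C_-\neq C_+$, and suppose toward a contradiction that, say, $C_+$ (together with the component $\widetilde A^i_\ell$ of $\widetilde A_\ell\setminus\Gamma$ it contains, namely the one containing $\mathcal A_+$) violates all of conditions (1)--(4). Negating (4) gives $C_+=\widetilde A^i_\ell$, so every orbit in $C_+$ lies at level exactly $\ell$; negating (1) and (3) makes $\K:=\cl(C_+)$ a compact subset of $\domcf$ with bounded periods; negating (2), with the remark above, shows $\K$ contains no stationary point. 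Using local connectedness of the periodic-orbit set near nondegenerate orbits one gets $\K\setminus C_+\subset\Gamma$, so $\K$ is a compact connected set of nonstationary periodic orbits of bounded period, lying in $\domcf$, all at level $\ell$, meeting $\Gamma$ only in $\gamma$ --- and near $\gamma$, by the previous paragraph, $\K$ is the half-open arc $\mathcal A_+\cup\Gamma$, so $\gamma$ is a \emph{loose end} of $\K$. I would then invoke the principle underlying \cite{mallet1982snakes,alligood1984families,fiedler1988global}: a continuation-invariant index --- the fixed-point index of a Poincar\'e map, or the orbit index of Alligood--Mallet-Paret--Yorke --- is $\pm1\neq0$ at $\gamma$ because $\det(\D P_{\mu_0}-I)\neq0$, and it is conserved across every bifurcation occurring within $\K$; the two phenomena that obstruct such conservation arguments in general, namely orbits dying at generalized centers and period-doubling branches that later annihilate, are both absent here (the first by $\ell>0$, the second because period-doubled branches sit at level $2\ell$ and so never enter $A_{\leq\ell}$). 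Conservation then forbids the index from ``terminating'' at the loose end $\gamma$, a contradiction; the same argument applied to $C_-$ completes the proof.

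The step I expect to be the main obstacle is this last one for \emph{non-generic} families. When $\K$ is a one-manifold the ``no loose ends'' assertion is essentially the observation that a compact one-manifold with boundary has an even number of boundary points, contradicting $\gamma$ being the unique loose end; but for an arbitrary $C^1$ family some orbit of $\K$ may have $+1$ as a Floquet multiplier with a higher-order degeneracy, so that several branches of periodic orbits meet there and $\K$ fails to be a manifold. Handling this requires importing, and adapting to the presence of $\cf$, the virtual-period techniques of Alligood and Yorke \cite{alligood1984families} (and the refinements of \cite{fiedler1988global}): one must verify that the index bookkeeping still closes up once the continuation is confined to the level set $A_\ell$ --- which is precisely where, as the linking-number heuristic of Alexander--Yorke recalled in \S\ref{sec:previous-continuation-results} suggests, the M\"obius/period-doubling pathology that defeats the argument in general cannot occur.
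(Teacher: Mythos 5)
Your first two paragraphs capture real ingredients of the argument: homotopy invariance of $\int\cf$ along families of orbits, the factor-of-$k$ jump across period-multiplying bifurcations (the paper only needs, and only proves, the doubling case, via the M\"obius band computation of Lemma~\ref{lem:mobius-c1f-integrals}), the exclusion of collapse onto stationary points when $\ell>0$, and the local arc structure at $\gamma$ coming from the implicit function theorem. But the proof has a genuine gap exactly where you flag "the main obstacle": for an arbitrary $C^1$ family you appeal to a "continuation-invariant index confined to the level set $A_\ell$" whose bookkeeping you say must be obtained by "importing and adapting" the virtual-period techniques of Alligood--Yorke --- and that adaptation is never carried out. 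That adaptation \emph{is} the theorem; asserting that the pathologies "cannot occur" on the level set does not substitute for it, because in a non-generic family the set $\K=\cl(C_+)$ need not have any branch/arc structure at all (orbits with $+1$ as a degenerate multiplier, accumulation of orbits with periods tending to a finite bound, etc.), so there is no a priori index or parity argument to run on it. The paper resolves this very differently: it first proves the generic case (Lemma~\ref{lem:cf-ell-cont-generic-fam}) with no index theory whatsoever, by showing (Lemma~\ref{lem:A-ell-manifold}) that for $f\in\K$ with bounded periods the level set $\widetilde{A}^i_\ell$ is a topological $1$-manifold, closed in the orbit space --- period-doubling points do not create branch points because the doubled branch sits at level $2\ell$ --- and then derives a contradiction from the classification of compact $1$-manifolds. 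The non-generic case is then handled by a quantitative perturbation argument modeled on Alligood--Yorke's proof but with virtual periods entirely eliminated: one builds the gap function $F$ of \eqref{eq:F-def}, which penalizes both failure of periodicity and deviation of $\bigl|\int\iota_\mu^*\cf\bigr|$ from $\ell$, chooses $\epsilon$-neighborhoods with a list of uniformity properties, and uses the semicontinuity statement of Lemma~\ref{lem:linking-number-lower-semi-cont} (integral converges iff period converges, and integrals can only jump up by integer factors) to show that a sufficiently close generic family $g_\epsilon\in\mathcal{K}$ would have a non-$(\cf,\ell)$-continuable orbit, contradicting the generic lemma. None of this approximation scheme, nor any workable substitute, appears in your proposal.

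A secondary problem: your claim that "the only other periodic orbits near $\Gamma$" in $A_{\leq\ell}$ are period-$k$-tupled ones, and hence that $\widetilde{A}_{\leq\ell}\setminus\Gamma$ has at most two components, is not justified. Orbits passing arbitrarily close to $\Gamma$ but of very long period need not stay in a tubular neighborhood of $\Gamma$, so their integrals are unconstrained and they may well lie in $A_{\leq\ell}$; the correct local statement (which the paper uses, with the explicit caveat "except, perhaps, for orbits of very long period") only excludes nearby orbits of \emph{bounded} period, and this is one of the reasons the quantitative control furnished by Lemma~\ref{lem:linking-number-lower-semi-cont} and the function $F$ is needed rather than a purely topological "loose end" count. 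Note also that Definition~\ref{def:cf-ell-global-cont} nowhere requires $\widetilde{A}_{\leq\ell}\setminus\Gamma$ to have at most two components, so this reduction is both unproved and unnecessary to the statement being targeted.
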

    
    The following theorem is our most general result for proving existence of periodic orbits and is essentially a straightforward corollary of Theorem \ref{th:main-thm}.     
    Given a subset $X\subset Q\times \R$ and any interval $J\subset \R$, in the statement of Theorem \ref{th:existence} we use the notation $X_J\coloneqq X\cap (Q\times J)$.
    
    \begin{restatable}[Global existence of periodic orbits]{Th}{ThmExistence}\label{th:existence}
    
    Assume the hypotheses of Theorem \ref{th:main-thm} and notation of Definition \ref{def:cf-ell-global-cont}.
    Assume that $\widetilde{A}_{\leq\ell}\setminus \Gamma$ is disconnected, let $\widetilde{A}_{\leq\ell}^1$ be one of its connected components, and assume that $\widetilde{A}_{\leq\ell}^1$ is equal to a connected component $\widetilde{A}_{\ell}^1$ of $\widetilde{A}_{\ell}\setminus \Gamma$.
    Further assume that there exists $\cc\subset \domcf \subset Q\times \R$ and $\mu^* < \mu_0$ (resp. $\mu^* > \mu_0$) satisfying the following properties:
    \begin{enumerate}
    	\item\label{item:th-ex-1} $\widetilde{A}_\ell^1\cap (Q \times \{\mu^*\}) = \varnothing$,
    	\item\label{item:th-ex-2} $\widetilde{A}_\ell^1\subset \cc$, 
    	\item\label{item:th-ex-3}  $\iota_\mu^*\cf(f_\mu(x)) > 0 $ for all $(x,\mu)\in \cc_{[\mu^*,\infty)}$ (resp. $(x,\mu)\in \cc_{(-\infty,\mu^*]}$), and
    	\item\label{item:th-ex-4} for every $\mu \geq \mu^*$ (resp. $\mu \leq \mu^*$), $\cc_{[\mu^*,\mu]}$ (resp. $\cc_{[\mu,\mu^*]}$) is compact.

    \end{enumerate}

    Then for all $\mu > \mu_0$ (resp. $\mu<\mu_0$), $\widetilde{A}_{\ell}^1 \cap (Q\times \{\mu\}) \neq \varnothing$.
    In particular, $f_\mu$ has a nonstationary periodic orbit for all $\mu \geq \mu_0$.    
    \end{restatable}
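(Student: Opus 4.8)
The plan is to argue by contradiction, playing the $(\cf,\ell)$-global continuability of $\gamma$ supplied by Theorem~\ref{th:main-thm} against hypotheses \eqref{item:th-ex-1}--\eqref{item:th-ex-4}. I treat the case $\mu^* < \mu_0$; the case $\mu^*>\mu_0$ is symmetric. Assume for contradiction that there is $\hat\mu > \mu_0$ with $\widetilde{A}_{\ell}^1 \cap (Q\times\{\hat\mu\}) = \varnothing$. Let $P\subset\R$ be the image of the connected set $\widetilde{A}_{\ell}^1$ under the projection $Q\times\R\to\R$ onto the second factor, so $P$ is a nonempty interval. Two elementary observations locate $P$: first, $\mu^*\notin P$ by \eqref{item:th-ex-1}; second, $\mu_0\in\cl(P)$, because removing the nonempty closed set $\Gamma$ from the connected set $\widetilde{A}_{\leq\ell}$ leaves components each of whose closure meets $\Gamma$, and $\widetilde{A}_{\leq\ell}^1=\widetilde{A}_{\ell}^1$ by hypothesis, while $\Gamma$ has second coordinate $\mu_0$. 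Since $\mu^*<\mu_0$ and $P$ is an interval, $P\subset(\mu^*,\infty)$; and since moreover $\hat\mu>\mu_0$, $\hat\mu\notin P$, and $P$ contains points arbitrarily close to $\mu_0$, the interval property gives $\mu_+:=\sup P\leq\hat\mu<\infty$.

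The crux is then to verify that the component $\widetilde{A}_{\leq\ell}^1=\widetilde{A}_{\ell}^1$ satisfies none of the four alternatives of Definition~\ref{def:cf-ell-global-cont}. By \eqref{item:th-ex-2} and $P\subset(\mu^*,\mu_+]$ we have $\widetilde{A}_{\ell}^1\subset\cc\cap(Q\times[\mu^*,\mu_+])=\cc_{[\mu^*,\mu_+]}$, which is compact by \eqref{item:th-ex-4} and hence, as $\domcf$ is Hausdorff, a compact subset of $\domcf$ containing $\cl(\widetilde{A}_{\ell}^1)$. This immediately rules out alternative~(1). It also shows any generalized center in $\cl(\widetilde{A}_{\ell}^1)$ would be a stationary point $(x,\mu)\in\cc_{[\mu^*,\infty)}$ with $f_\mu(x)=0$, hence with $\iota_\mu^*\cf(f_\mu(x))=0$, contradicting \eqref{item:th-ex-3}; so alternative~(2) is ruled out. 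For alternative~(3), \eqref{item:th-ex-3} and compactness give a minimum $c:=\min_{\cc_{[\mu^*,\mu_+]}}\iota_\mu^*\cf(f_\mu(x))>0$, so any periodic orbit $\alpha_\mu\subset\widetilde{A}_{\ell}^1$ of period $T$, parametrized by time, satisfies $\ell=\left|\int_{\alpha_\mu}\iota_\mu^*\cf\right|=\int_0^T\iota_\mu^*\cf(f_\mu(\alpha_\mu(t)))\,dt\geq cT$, whence $T\leq\ell/c$ and the periods of $\widetilde{A}_{\ell}^1$ are bounded. Finally alternative~(4) is ruled out by the standing hypothesis $\widetilde{A}_{\ell}^1=\widetilde{A}_{\leq\ell}^1$.

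With this established I invoke Theorem~\ref{th:main-thm}: since $\widetilde{A}_{\leq\ell}\setminus\Gamma$ is disconnected by hypothesis, $(\cf,\ell)$-global continuability of $\gamma$ forces every component of $\widetilde{A}_{\leq\ell}\setminus\Gamma$ containing a component of $\widetilde{A}_{\ell}\setminus\Gamma$ to satisfy one of alternatives~(1)--(4); but $\widetilde{A}_{\leq\ell}^1$ contains $\widetilde{A}_{\ell}^1$ and satisfies none of them. This contradiction shows no such $\hat\mu$ exists, i.e.\ $\widetilde{A}_{\ell}^1\cap(Q\times\{\mu\})\neq\varnothing$ for every $\mu>\mu_0$. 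The final sentence follows because $\widetilde{A}_{\ell}^1\subset A_\ell\subset A$ consists of points on nonstationary periodic orbits, while at $\mu=\mu_0$ the orbit $\gamma$ itself is a nonstationary periodic orbit of $f_{\mu_0}$ (its integral has absolute value $\ell>0$).

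I expect the main obstacle to be the topological bookkeeping of the first paragraph rather than the estimates: one must be careful that $P$ really is an interval, that $\mu_0\in\cl(P)$ together with $\mu^*\notin P$ pins $P$ inside $(\mu^*,\mu_+]$, and that ``not contained in any compact subset of $\domcf$'' is negated correctly---which requires knowing that $\cl(\widetilde{A}_{\ell}^1)$, taken in $\domcf$, lies inside the compact set $\cc_{[\mu^*,\mu_+]}$ (using that $\domcf$ is Hausdorff, so compact subsets are closed). Once the set-up is in place, the steps ruling out alternatives~(1)--(4) are routine and the appeal to Theorem~\ref{th:main-thm} is immediate.
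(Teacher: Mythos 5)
Your argument is essentially the paper's own proof: assume some $\mu_1>\mu_0$ (your $\hat\mu$) is missed by $\widetilde{A}_\ell^1$, use hypotheses \eqref{item:th-ex-1}, \eqref{item:th-ex-2}, \eqref{item:th-ex-4} to trap $\widetilde{A}_\ell^1$ (and hence its closure) in the compact set $\cc_{[\mu^*,\mu_1]}$, use \eqref{item:th-ex-3} plus compactness to exclude generalized centers and to bound periods --- your inline estimate is exactly Lemma~\ref{lem:cf-period-bound} --- note alternative (4) of Definition~\ref{def:cf-ell-global-cont} is excluded by the standing hypothesis $\widetilde{A}_\ell^1=\widetilde{A}_{\leq\ell}^1$, and contradict the $(\cf,\ell)$-global continuability furnished by Theorem~\ref{th:main-thm}. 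The only difference is that you make explicit why the parameter projection of $\widetilde{A}_\ell^1$ is pinned between $\mu^*$ and $\mu_1$ (via a boundary-bumping-type claim that, strictly speaking, needs more than bare connectedness of $\widetilde{A}_{\leq\ell}$, though the paper elides the very same point by asserting the containment in $\cc_{[\mu^*,\mu_1]}$ directly from connectedness and hypotheses \eqref{item:th-ex-1}--\eqref{item:th-ex-2}), so the two proofs coincide in substance.
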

   
    \begin{Rem}\label{rem:classical-result-difficulties}
    	Theorem~\ref{th:existence} is a direct consequence of Theorem~\ref{th:main-thm}.
        Three key points are that the hypotheses of Theorem \ref{th:existence} do not require:
        \begin{itemize}
        	\item verification that the family of vector fields belong to $\K$ as in Proposition~\ref{prop:p-global-cont} (\cite[Thm~4.2,~Thm~2.2]{mallet1982snakes,alligood1983index}),
        	\item any a priori upper bounds on the (virtual) periods of all periodic orbits to be established, as one might hope to do in order to directly apply Propositions \ref{prop:p-global-cont} or \ref{prop:global-cont} \cite[Thm~3.1]{alligood1984families}, or
        	\item a trapping region to be found, as the hypotheses only require that periodic orbits in $\widetilde{A}^1_\ell$ do not meet the boundary of $\cc$.
        \end{itemize}
        We have found that, in particular, the second requirement represents serious difficulties in proving that periodic orbits exist (on large parameter intervals) for both the repressilator \eqref{eq:repressilator-intro} and Sprott \eqref{eq:sprott-intro} systems using the classical Propositions~\ref{prop:p-global-cont} and \ref{prop:global-cont}.
        To establish a priori upper bounds on the (virtual) periods as in the second requirement, one would need to somehow rule out, e.g., the possibility of an infinite cascade of period-doubling bifurcations.
        Theorems~\ref{th:main-thm} and \ref{th:existence} sidestep this difficulty by studying only $\widetilde{A}_{\leq\ell}^1$ rather than the larger $A^1\supset \widetilde{A}_{\leq \ell}^1$ of Definitions~\ref{def:p-global-cont} and \ref{def:global-cont} which, as will follow from Lemma~\ref{lem:mobius-c1f-integrals}, effectively ignores periodic orbits in $A^1$ resulting from period-doubling bifurcations.
        This is illustrated in Figure~\ref{fig:lem-mfld-a} for the case of a generic family of ODEs; our methods amount to studying periodic orbits represented by points in the thick curve while systematically ignoring periodic orbits represented by points in the thin curves resulting from period-doubling bifurcations (period-doubling bifurcations are represented by points where branching occurs in the figure).
    \end{Rem}

    The following result is proven using Theorem \ref{th:existence}.
    Although its statement appears rather specific and complicated, Theorem \ref{th:hopf} represents the formalization of a common argument we have used to apply Theorem \ref{th:existence} in multiple concrete examples.
    Specifically, in \S \ref{sec:examples} we use Theorem \ref{th:hopf} to prove global existence results for periodic orbits in both the Sprott system \eqref{eq:sprott-intro} and repressilator \eqref{eq:repressilator-intro}.

    Given a subset $X\subset Q\times \R$ and any interval $J\subset \R$, we again use the notation $X_J\coloneqq X\cap (Q\times J)$ in Theorem \ref{th:hopf}.   
    By a point $(x,\mu)\in Q\times \R$ of generic Hopf bifurcation, we mean a point satisfying the hypotheses of the standard Hopf bifurcation theorem (see \cite{guckenheimer1983nonlinear, ruelle1989elements, robinson1999ds, kuznetsov2013elements}).
    In Theorem \ref{th:hopf} we refer to the first de Rham cohomology $\Hdr^1(\slot)$ \cite[Ch.~17]{lee2013smooth} and to the Poincar\'{e} dual of a submanifold \cite[pp.~50--53, p.~69]{bott1982differential}; see Appendix~\ref{app:closed-1-forms} for a brief discussion.
    The de Rham version of Poincar\'{e} duality is  typically discussed in the setting of $C^\infty$ closed forms, but (as noted in Appendix~\ref{app:closed-1-forms}) every $C^1$ closed form is cohomologous to a $C^\infty$ closed form \cite[pp.~61--70]{deRham1984differentiable}, so   
    no distinctions need to be made.

    \begin{restatable}[Global existence of periodic orbits following a Hopf bifurcation]{Th}{ThHopf}\label{th:hopf}
    Assume that $Q$ is oriented, and let $N\subset Q\times \R$ be a properly embedded, smooth, oriented, codimension-1 submanifold with boundary $M=\partial N$. 
    Let $f\in  C^1(Q\times \R,\T Q)$ be a family of vector fields, and let $\cf\in [\cf]\in \Hdr^1((Q\times \R)\setminus M)$ be a $C^1$ closed 1-form representing the (closed) Poincar\'{e} dual $[\cf]$ of $N\setminus M$.
    Further assume that there exists $\cc\subset Q\times \R$, $(x_c,\mu_c)\in M\cap \interior(\cc)$ and $\mu^* < \mu_c$ (resp. $\mu^* > \mu_c$) satisfying the following properties:
    \begin{enumerate}
    	\item\label{item:th-hopf-1} $f_{\mu^*}$ has no periodic orbits contained in $\cc_{\{\mu^*\}}$, 
    	\item\label{item:th-hopf-2} no periodic orbits of $f$ intersect $(\partial \cc)_{[\mu^*,\infty)}$ (resp. $\partial \cc_{(-\infty,\mu^*]})$, 
    	\item\label{item:th-hopf-3}  For every $\mu_1 > \mu^*$ (resp. $\mu_1 < \mu^*$), there exists $\epsilon > 0$ such that $\iota_\mu^*\cf(f_\mu(x)) \geq \epsilon$ for all $(x,\mu)\in (\cc \setminus M)_{[\mu^*,\mu_1]}$ (resp. $(x,\mu)\in (\cc \setminus M)_{[\mu_1,\mu^*]}$) ,
    	\item\label{item:th-hopf-4} for every $\mu \geq \mu^*$ (resp. $\mu \leq \mu^*$), $\cc_{[\mu^*,\mu]}$ (resp. $\cc_{[\mu,\mu^*]}$) is compact, 
    	\item\label{item:th-hopf-5}  $f$ is $C^3$ on a neighborhood of $(x_c,\mu_c)$,  $(x_c,\mu_c)\in M \cap \interior(\cc)$ is a point of generic Hopf bifurcation for $f$, and $\cc_{[\mu^*,\infty)}$ (resp. $\cc_{(-\infty,\mu^*]}$) contains no other generalized centers, 
    	\item\label{item:th-hopf-6} no nonstationary periodic orbits of $f$ intersect $(\cc\cap M)_{[\mu^*,\infty)}$ (resp. $(\cc\cap M)_{(-\infty,\mu^*]}$), and
    	\item\label{item:th-hopf-7} letting $E^c\subset \T_{x_c}Q$ be the two-dimensional center subspace for $\D_{x_c}f_{\mu_c}$,
    	$$\T_{(x_c,\mu_c)}(Q\times \R) = (\D_{(x_c,\mu_c)}\iota_\mu E^c) \oplus \T_{(x_c,\mu_c)}M.$$
    \end{enumerate}
    
    Then for all $\mu > \mu_c$ (resp. $\mu < \mu_c$), $f_\mu$ has a nonstationary periodic orbit contained in $(\cc\setminus M)_{\{\mu\}}$.     
    \end{restatable}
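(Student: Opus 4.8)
The plan is to deduce Theorem~\ref{th:hopf} from Theorem~\ref{th:existence} by manufacturing, from the Hopf bifurcation at $(x_c,\mu_c)$, exactly the configuration that Theorem~\ref{th:existence} requires: a periodic orbit $\gamma$ at some parameter $\mu_0$ near $\mu_c$, lying on a component $A$ of nonstationary periodic orbits of $f|_{\domcf}$ with $\domcf = (Q\times\R)\setminus M$, whose $\cf$-integral $\ell$ is positive, which has no $+1$ Floquet multiplier, and for which $\widetilde A^1_{\le\ell} = \widetilde A^1_\ell$ is cut off at $\mu^*$ while sitting inside a set $\cc$ on which hypotheses \eqref{item:th-ex-3}--\eqref{item:th-ex-4} of Theorem~\ref{th:existence} hold. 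First I would invoke the standard Hopf bifurcation theorem at $(x_c,\mu_c)$ (using the $C^3$ hypothesis \eqref{item:th-hopf-5}): it produces a one-parameter family of small-amplitude nonstationary periodic orbits emanating from $(x_c,\mu_c)$, whose parameter $\mu$ and period are smooth functions of the amplitude, with period tending to $2\pi/|\omega_c|$; choosing the amplitude small, pick one such orbit $\gamma$ at a parameter $\mu_0 \neq \mu_c$ with $|\mu_0 - \mu_c|$ as small as we like, so that $\gamma$ is a small loop near $(x_c,\mu_c)$ and $\gamma$ lies in $\interior(\cc)$ by \eqref{item:th-hopf-5}. Genericity of the Hopf bifurcation gives that the Floquet multipliers of the bifurcating orbits are $1$ together with a complex-conjugate pair near but not equal to $1$ (since the transversality/nondegeneracy conditions control the crossing), so $\gamma$ has $+1$ as a Floquet multiplier with algebraic multiplicity exactly one coming from the flow direction — but Theorem~\ref{th:main-thm} requires $\gamma$ to \emph{not} have $+1$ as a Floquet multiplier at all. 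This is the first real subtlety: I expect the resolution is that "Floquet multiplier" here refers to the eigenvalues of the linearized Poincar\'e return map (which excludes the trivial flow direction), and genericity of the Hopf point guarantees that return map has no eigenvalue equal to $1$; I would make this precise.

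Next I must check that $\ell := |\int_\gamma \iota_{\mu_0}^*\cf| > 0$. Here is where the Poincar\'e-duality hypothesis does its work: $\cf$ represents the Poincar\'e dual of $N\setminus M$, so $\int_\gamma \iota_{\mu_0}^*\cf$ equals the intersection number of the loop $\gamma$ (pushed into $Q\times\R$, where it bounds because $\gamma$ is a small loop emanating from a Hopf point) with $N$. By the transversality hypothesis \eqref{item:th-hopf-7}, the $2$-disk in $Q\times\R$ spanned by the bifurcating family as the amplitude varies is transverse to $M=\partial N$ at $(x_c,\mu_c)$ and meets it in exactly one point, so the small loop $\gamma$ links $M$ once; hence this intersection number is $\pm 1$, giving $\ell = 1 > 0$. (I would spell out the linking-number/intersection-number computation carefully, as this is the geometric heart of why the Hopf point forces $\ell\neq 0$; the transversality condition \eqref{item:th-hopf-7} is exactly what is needed.) With $\ell$ in hand, form $A \ni \gamma$ the component of nonstationary periodic orbits of $f$ restricted to $\domcf=(Q\times\R)\setminus M$ — note hypothesis \eqref{item:th-hopf-6} ensures no nonstationary periodic orbit in the relevant region meets $M$, so restricting to $\domcf$ loses nothing — and then $\widetilde A_{\le\ell}, \widetilde A_\ell, \widetilde A^1_{\le\ell}, \widetilde A^1_\ell$ as in Definition~\ref{def:cf-ell-global-cont}. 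I would argue that near the Hopf point, $\widetilde A_{\le\ell}\setminus\Gamma$ is disconnected (the bifurcating branch goes off to one side $\mu>\mu_c$ while, on the other, we can cut things off), choosing $\widetilde A^1_{\le\ell}$ to be the component containing the genuinely bifurcating orbits; hypotheses \eqref{item:th-hopf-5} (no other generalized centers) and the constant-$\ell$ structure should force $\widetilde A^1_{\le\ell} = \widetilde A^1_\ell$ — this uses that along a period-doubling the $\cf$-integral doubles, as alluded to in Remark~\ref{rem:classical-result-difficulties}, so the "thin curves" are in $A_{2\ell}$, not $A_\ell$, and do not contaminate $\widetilde A^1_\ell$.

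Finally I would verify the remaining hypotheses of Theorem~\ref{th:existence} for the given $\cc$ and $\mu^*$: property~\eqref{item:th-ex-1}, $\widetilde A^1_\ell \cap (Q\times\{\mu^*\}) = \varnothing$, follows from \eqref{item:th-hopf-1} (no periodic orbits at all in $\cc_{\{\mu^*\}}$) once one knows $\widetilde A^1_\ell\subset\cc$; property~\eqref{item:th-ex-2}, $\widetilde A^1_\ell\subset\cc$, follows because by \eqref{item:th-hopf-2} periodic orbits in the region cannot cross $\partial\cc$, and $\widetilde A^1_\ell$ is connected and meets $\interior(\cc)$ near the Hopf point — so it stays inside $\cc$; property~\eqref{item:th-ex-3}, $\iota_\mu^*\cf(f_\mu(x))>0$ on $\cc_{[\mu^*,\infty)}$, is immediate from \eqref{item:th-hopf-3} (which gives it on $\cc\setminus M$, and $\widetilde A^1_\ell$ avoids $M$ by \eqref{item:th-hopf-6}); property~\eqref{item:th-ex-4} is verbatim \eqref{item:th-hopf-4}. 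Then Theorem~\ref{th:existence} yields $\widetilde A^1_\ell\cap(Q\times\{\mu\})\neq\varnothing$ for all $\mu>\mu_0$; since $\mu_0$ may be taken arbitrarily close to $\mu_c$ and since between $\mu_c$ and $\mu_0$ the bifurcating branch itself supplies orbits, one concludes a nonstationary periodic orbit in $(\cc\setminus M)_{\{\mu\}}$ for all $\mu>\mu_c$ (the orbit lies in $\cc$ by \eqref{item:th-hopf-2} and off $M$ by \eqref{item:th-hopf-6}). \emph{The main obstacle} I anticipate is bridging the small interval between $\mu_c$ and $\mu_0$ cleanly — i.e., making rigorous the claim that one can choose $\mu_0$ as close to $\mu_c$ as desired with a bona fide $\gamma$ on the right side, and patching the Hopf-theorem-provided orbits for $\mu\in(\mu_c,\mu_0]$ together with the Theorem~\ref{th:existence} orbits for $\mu>\mu_0$ into a single "for all $\mu>\mu_c$" statement — together with the bookkeeping needed to confirm $\widetilde A^1_{\le\ell}=\widetilde A^1_\ell$ rather than merely $\widetilde A^1_\ell\subset\widetilde A^1_{\le\ell}$, which is where the period-doubling/$\cf$-integral-doubling phenomenon must be controlled.
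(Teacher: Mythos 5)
Your overall route is the same as the paper's: invoke the Hopf theorem at $(x_c,\mu_c)$, use hypothesis \ref{item:th-hopf-7} and Poincar\'e duality to show the bifurcating orbit $\gamma$ at a nearby $\mu_0$ satisfies $\left|\int_\gamma \iota_{\mu_0}^*\cf\right| = 1$, take $\gamma$ hyperbolic (so no nontrivial multiplier equals $+1$), and feed everything into Theorem~\ref{th:existence}, letting $\mu_0\to\mu_c$ at the end. However, two steps have genuine problems. First, your mechanism for $\widetilde{A}^1_{\leq \ell} = \widetilde{A}^1_{\ell}$ is the wrong one: orbits born in a period-doubling have $\cf$-integral $2\ell$, so they are excluded from $A_{\leq\ell}$ by definition and were never the threat; the threat is orbits with integral strictly \emph{less} than $\ell$. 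The paper closes this by integrality: because $\cf$ represents the Poincar\'e dual of $N\setminus M$, its integral over any periodic orbit is an integer, and by hypothesis \ref{item:th-hopf-3} together with Lemma~\ref{lem:cf-period-bound} that integer is positive, hence $\geq 1 = \ell$; so no orbit in the relevant component has integral below $\ell$ and the two sets coincide. Your proposal sets $\ell=1$ but never uses this integrality, and the ``doubling'' argument you offer would not rule out the problematic orbits.

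Second, your verification of the hypotheses of Theorem~\ref{th:existence} ``with the given $\cc$'' does not go through as stated. Theorem~\ref{th:existence} requires $\cc\subset\domcf$, positivity of $\iota_\mu^*\cf(f_\mu)$ at \emph{every} point of $\cc_{[\mu^*,\infty)}$ (this is what yields the uniform $\epsilon$ on the compact slabs and hence the period bound via Lemma~\ref{lem:cf-period-bound}), and compactness of $\cc_{[\mu^*,\mu]}$. But the $\cc$ of Theorem~\ref{th:hopf} meets $M$ (indeed $(x_c,\mu_c)\in M\cap\interior(\cc)$), where $\cf$ is not even defined, and the obvious fix $\cc\setminus M$ destroys compactness since $M$ is closed. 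One must therefore prove that $\widetilde{A}^1_{\ell}$ stays a \emph{uniform} distance from $M$ --- not merely that it is disjoint from $M$ --- so that one can delete an open neighborhood $U_1$ of $M$ and apply Theorem~\ref{th:existence} to $\widetilde{\cc} = \cc\setminus U_1$. That separation is a real argument in the paper: a sequence in $\widetilde{A}^1_{\ell}$ approaching $M$ would have uniformly bounded periods (hypothesis \ref{item:th-hopf-3} plus Lemma~\ref{lem:cf-period-bound}), so its limit would be a generalized center or lie on a nonstationary periodic orbit on $M$, contradicting hypotheses \ref{item:th-hopf-5} and \ref{item:th-hopf-6}. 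Your ``immediate from \ref{item:th-hopf-3}'' and ``verbatim \ref{item:th-hopf-4}'' claims skip exactly this step. (Minor: your description of the bifurcating orbit's multipliers as a complex pair near $1$ is off, but your intended conclusion --- no nontrivial multiplier equal to $+1$, via genericity/hyperbolicity near the Hopf point --- is correct and is how the paper handles it.)
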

    
    \subsection{Outline of the sequel}
    The remainder of the paper is organized as follows.
    
    In \S \ref{sec:generic-family-results} we develop the theory for the  so-called generic families of vector fields, i.e., those families belonging to a certain generic subset $\K\subset C^5(Q\times \R,\T Q)$ of the $C^5$ one-parameter families.
    We begin in \S \ref{sec:background-generic-gamilies} by discussing $\K$ and giving the relevant background on periodic orbits for $f\in \K$.
    Along the way we introduce \concept{orbit diagrams}, which are very useful in the generic setting.
    Section \ref{sec:generic-family-continuation} introduces some of the key ideas and proves Theorem \ref{th:main-thm} in the special case that the vector field family is generic (Lemma \ref{lem:cf-ell-cont-generic-fam}).
    
    In \S \ref{sec:non-generic-families} we extend the results of \S \ref{sec:generic-family-continuation} to prove Theorem \ref{th:main-thm} for the general case of an arbitrary family $f\in C^1(Q\times \R,\T Q)$.
    The proof is by generic approximation and was inspired by techniques of \cite{alligood1984families}.
    As a straightforward corollary of Theorem \ref{th:main-thm} we obtain Theorem \ref{th:existence}, which is a fairly general theorem for proving existence of periodic orbits.
    We then record as Theorem \ref{th:hopf} a systematic argument involving Theorem \ref{th:existence} for proving existence of periodic orbits on large parameter intervals following a Hopf bifurcation, in a setting which appears common in certain applications.
    
    In \S \ref{sec:examples} we illustrate the utility of our results in some specific ODEs.
    In \S \ref{sec:repressilator} we give a periodic orbit existence proof for the repressilator \eqref{eq:repressilator-intro}. 
    Our proof is distinct from the proof of \cite{buse2009existence} and does not use techniques of monotone systems \cite{mallet1990poincare}.
    \S \ref{sec:sprott} is more involved and uses our results to give a periodic orbit existence proof for the Sprott system \eqref{eq:sprott-intro}. 
    The proofs in both \S \ref{sec:repressilator} and \S \ref{sec:sprott} amount to showing that the repressilator and Sprott system satisfy the hypotheses of Theorem \ref{th:hopf}.
    
    Finally, Appendix~\ref{app:closed-1-forms} recalls some standard results concerning closed $1$-forms.	
            			
	\section{Generic families}\label{sec:generic-family-results}

	 \begin{figure}
	 	\centering
	 	\def\svgwidth{.5\columnwidth}
	 	\import{figs/}{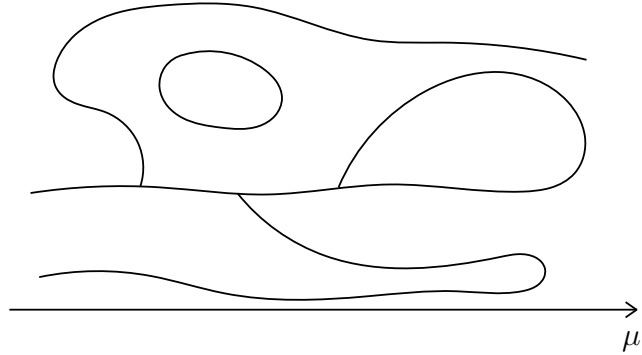}
	 	\caption{An example orbit diagram for a generic family $f\in \K$ possessing periodic orbits having a uniform upper bound on their periods. Each point corresponds to a single periodic orbit of $f$.
	 		The specific diagram above includes periodic orbits of all three types: 0, 1, and 2 (see Figure \ref{fig:orbit-types}). }\label{fig:ex-orbit-diagram}
	 \end{figure}	
	
    \begin{figure}
    	\centering
    	\def\svgwidth{.5\columnwidth}
    	\import{figs/}{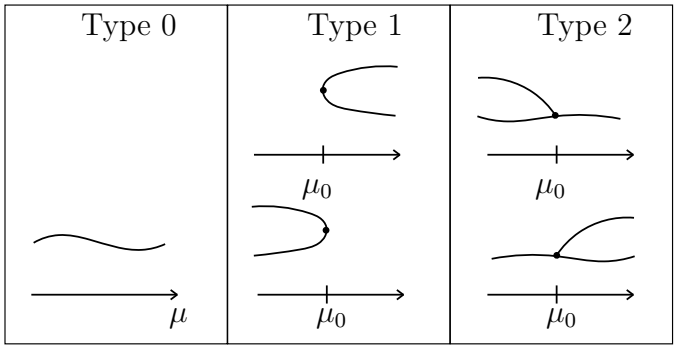}
    	\caption{Portions of orbit diagrams containing the three types of periodic orbits occurring in generic one-parameter families, assuming that periods are bounded on a neighborhood within the containing component of periodic orbits. In the second and third columns, the dots correspond to type 1 and type 2 orbits occurring at $\mu = \mu_0$. All other points in all three columns correspond to type 0 orbits. }\label{fig:orbit-types}
    \end{figure}

	\subsection{Background on generic families}\label{sec:background-generic-gamilies}
	
	Sotomayor showed that, in the $C^5$ Whitney (or strong $C^5$) topology, there is a residual subset $\mathcal{K}'\subset C^5(Q\times \R,\T Q)$ of vector field families such that all periodic orbits of $f\in \K'$ are either hyperbolic or are ``quasi-hyperbolic'', meaning that they possess one of three normal forms \cite[Thm~A]{sotomayor1973generic}; similar results emphasizing diffeomorphisms rather than flows were obtained by Brunovsk\`{y} \cite{brunovksy1971one,brunovsky1971one2}.\footnote{Actually, Sotomayor assumed that $Q$ is compact, replaced the parameter space $\R$ with the circle $S^1$, and considered the weak $C^5$ (or $C^5$ compact-open) topology.
	However, the same proofs work for noncompact $Q$ and parameter space $\R$ if the $C^5$ Whitney topology is used.
	Sotomayor does point out that the parameter space can be taken to be $\R$ if the Whitney topology is used in \cite[p.~572,~Rem.~4]{sotomayor1973generic}.}
    Sotomayor's results in particular imply that every periodic orbit either (i) has no Floquet multipliers which are roots of unity, (ii) is a point of generic saddle-node bifurcation, or (iii) is a point of generic period-doubling bifurcation.
	An outline of another proof is given in the appendix of \cite{alligood1984families}, where the hyperbolic and quasi-hyperbolic periodic orbits of \cite{sotomayor1973generic} are referred to simply as types $0'$, $1'$, and $2'$.
    Type $1'$ and $2'$ orbits have no multipliers other than $\pm 1$ on the unit circle, but utilizing Lyapunov-Schmidt---rather than center manifold---reduction in our reasoning will enable us to relax this restriction and prove results for a larger
    subset $\K\supset \K'$ of families having periodic orbits of three types which are more general than $0'$, $1'$, and $2'$.
    Following \cite{mallet1982snakes,alexander1983continuability,alligood1983index,alligood1984families}, we refer to these more general types of orbits as types 0, 1, and 2 (type $0$ is actually the same as type $0'$). 	
    Note that since the space of $C^5$ vector field families equipped with the Whitney topology is a Baire space \cite[Thm~4.4(b)]{hirsch1976differential}, it follows that $\K'$---hence also $\K$---is dense in the space of $C^5$ families. 
    In the sequel, as in the mentioned references we sometimes simply refer to families in $\K$ as ``generic''.     
    
    In order to provide visual aid for our descriptions of these orbit types, we  introduce ``orbit diagrams'' as in \cite{mallet1982snakes,alexander1983continuability,alligood1983index}.
    We can introduce an equivalence relation $\sim$ on the subset $\mathcal{O}\subset Q\times \R$ of periodic orbits of $f$ so that $(x,\mu)\sim (y,\nu)$ if and only if $\mu = \nu$ and $x,y$ lie on the same periodic orbit.
    Since the natural projection $\pi_2\colon \mathcal{O} \to \R$ descends to a map $\widetilde{\pi}_2\colon (\mathcal{O}/\sim) \to \R$, we can ``plot'' $(\mathcal{O}/\sim)$ as a multi-valued function of $\mu$ with each point representing a periodic orbit of $f$.
    An example orbit diagram for a generic family is shown in Figure \ref{fig:ex-orbit-diagram}.
    This specific orbit diagram happens to contain orbits of all three types.
    We now proceed to define orbits of type 0, 1, and 2, which are also illustrated via orbit diagrams in Figure \ref{fig:orbit-types}.

    A type 0 orbit is one which has no Floquet multipliers that are roots of unity.
    In particular, since $+1$ is not a Floquet multiplier, applying the implicit function theorem to a Poincar\'{e} map shows that a type 0 orbit is locally continuable as a function of $\mu$ along a unique branch of orbits on which periods vary continuously.
    
    A type 1 orbit $\gamma$ has a single (algebraically simple) Floquet multiplier equal to $+1$, no other multipliers which are roots of unity, and we require that the eigenvalue $\lambda_1(\mu)$ satisfying $\lambda_1(\mu_0)=1$ crosses the unit circle with nonzero velocity: $\lambda_1'(\mu_0)\neq 0$.
    Let $(x_0,\mu_0)\in \Gamma$ be a point on the image $\Gamma$ of $\gamma$ and let $T_0$ be the period of $\gamma$.
    Letting $S$ be a codimension-1 embedded submanifold intersecting $\Gamma$ transversely at $(x_0,\mu_0)$, $U\subset S$ and $J\subset \R$  sufficiently small neighborhoods of $x_0$ and $\mu_0$, and $P\colon U\times J\to S$ a ($\mu$-dependent) Poincar\'{e} map, for a type 1 orbit we additionally require that certain generic conditions are satisfied by the partial derivatives of $P$ at $(x_0,\mu_0)$ so that the one-dimensional Lyapunov-Schmidt reduction \cite[Ch.~1.3]{golubitsky1985singularities} of the equation $(P(x,\mu)-x = 0)$ undergoes a generic saddle node bifurcation at $(x_0,\mu_0)$  (see \cite[pp.~241--242]{robinson1999ds}). 
    It follows that there are two unique branches of fixed points of $P$---and hence two branches of periodic orbits for $f$--- which approach each other as $\mu$ increases (resp. decreases), coalesce at $\mu = \mu_0$, and disappear for $\mu > \mu_0$ (resp. $\mu<\mu_0$).
    See Figure \ref{fig:orbit-types}.
    Furthermore, it follows from the implicit function theorem that the periods of the orbits corresponding to $\mu$ in each of these two families asymptotically become equal as $\mu\to \mu_0$; additionally, there are no other periodic orbits near $\Gamma$ having periods near $T_0$ except for those orbits on one of the two bifurcating branches described above.
    
    A type 2 orbit $\gamma$ has a single (algebraically simple) Floquet multiplier equal to $-1$, no other multipliers which are roots of unity, and we require that the eigenvalue $\lambda_1(\mu)$ satisfying $\lambda_1(\mu_0)=-1$ crosses the unit circle with nonzero velocity: $\lambda_1'(\mu_0)\neq 0$.
    As above let $(x_0,\mu_0)\in \Gamma$ and let $T_0$ be the period of $\gamma$. 
    Letting $P\colon U\times J\to S$ be a Poincar\'{e} map as above, we additionally require that certain generic conditions are satisfied by the partial derivatives of $P$ at $(x_0,\mu_0)$ so that the one-dimensional Lyapunov-Schmidt reduction of the equation $(P_\mu\circ P_\mu(x)-x = 0)$ undergoes a standard pitchfork bifurcation at $(x_0,\mu_0)$;\footnote{See \cite[Thm~7.3.1]{robinson1999ds} for conditions applicable to the one-dimensional case, and \cite[p.~33,~eq.~3.23]{golubitsky1985singularities} for the Lyapunov-Schmidt translation to conditions applicable to the higher-dimensional case.} this implies that $P$ undergoes a version of the period-doubling or flip bifurcation at $(x_0,\mu_0)$.
    The preceding implies that $\gamma$ is locally continuable as a function of $\mu$ (since $+1$ is not a multiplier of $\gamma$), and also that there exists an additional branch of periodic orbits bifurcating from $\gamma$.
    Furthermore, the (minimal) periods of the orbits on the bifurcating branch at $\mu$ tend to twice the period of $\gamma$ as $\mu \to \mu_0$, and no orbits on the bifurcating branch sufficiently close to $\gamma$ have $+1$ as a multiplier.
    It follows from the implicit function theorem that the periods of orbits vary continuously when traveling between the two branches of ``short'' orbits emanating from a type 2 orbit, but that the periods jump by a factor of two when entering the branch of ``long'' orbits arising from the period-doubling bifurcation.
    Because the Lyapunov-Schmidt proof of this period-doubling bifurcation is based on the implicit function theorem, it additionally follows that there are no periodic orbits near $\Gamma$ having periods near $T_0$ or $2 T_0$ except for those orbits on one of the three branches (two ``short'' and one ``long'') described above.
    
    If $A\subset Q\times \R$ is a connected component of periodic orbits for a generic family and $\sim$ is the equivalence relation defined above, it follows from the above discussion that $A/\sim$ has a fairly simple structure, except possibly for phenomena involving orbits with very large periods; compare with Figure \ref{fig:ex-orbit-diagram}.
    After stating the following definition, we record the properties of $A/\sim$ we need in Proposition \ref{prop:generic-background}.

	\begin{Def}[Consistently oriented curves in the M\"{o}bius band]\label{def:consist-orient}
		Let $X$ be the M\"{o}bius band (with boundary). 
		Let $\Gamma_1$ be the middle circle of the M\"{o}bius band, $\Gamma_2$ be the boundary circle, and let $\pi\colon X\to \Gamma_1$ be the straight-line retraction of $X$ onto the middle circle.
		Then (depending on orientations) the degree of $\pi|_{\Gamma_2}\colon \Gamma_2\to \Gamma_1$ is $\pm 2$.
		We say that $\Gamma_1$ and $\Gamma_2$ are \concept{consistently oriented} if the degree of $\pi|_{\Gamma_2}$ is $+2$.	    
	\end{Def}    
    
	\begin{Prop}\label{prop:generic-background}
		Let $Y\subset Q \times \R$ be an arbitrary subset of nonstationary periodic orbits for a generic family $f\in \K\subset C^5(Q\times \R,\T Q)$.
		Define an equivalence relation $\sim$ on $Y$ so that $(x,\mu)\sim (y,\nu)$ if and only if $\mu = \nu$ and $x,y$ lie on the same periodic orbit. 
		Let $\pi\colon Y\to Y/\sim$ be the quotient map, and let $[(x,\mu)]\coloneqq \pi(x,\mu)$ denote the equivalence class of $(x,\mu)\in Y$.
		If $\gamma$ is a periodic orbit for $f_\mu$ with image $\Gamma$ satisfying $\Gamma\times \{\mu\} \subset Y$, then by an abuse of notation we let $[\gamma]\coloneqq [(\gamma(0),\mu)]$.
		 
		We have the following.
		\begin{enumerate}
			\item\label{item:pi-open} The quotient map $\pi\colon Y\to Y/\sim$ is open.
			If the periods of orbits in $Y$ are uniformly bounded from above, then $\pi$ is also closed and $Y/\sim$ is Hausdorff.
			\item \label{item:type0-1-continuation}  Assume that $Y$ is an open subset of a connected component of nonstationary periodic orbits.
			If $[\gamma]\in Y/\sim$ is a type 0 or type 1 orbit, then there exists $\epsilon > 0$ and a $C^5$ homotopy $H\colon S^1 \times (-\epsilon,\epsilon)\to Q\times \R$ with the following properties.
			\begin{itemize}
				\item For each $s\in (-\epsilon,\epsilon)$, $H_s\coloneqq H(\slot,s)$ is a diffeomorphism onto the image of a periodic orbit in $Y$.
				\item For any $z\in S^1$, the map $(-\epsilon,\epsilon)\to Y/\sim$ given by $s\mapsto \pi \circ H_s(z)$ is a homeomorphism onto a subset $U\subset Y/\sim$ containing $[\gamma]$, and $\pi \circ H_0(z)=[\gamma]$.
				\item For every $N > 0$, there exists a neighborhood $V_N\subset Y/\sim$ of $[\gamma]$ such that $V_N\setminus U$ contains only orbits with periods greater than $N$.
			\end{itemize}
			
			\item\label{item:vertex-nbhd} Assume that $Y$ is an open subset of a connected component of nonstationary periodic orbits.
			If $[\gamma]\in Y/\sim$ is a type 2 orbit, then there are three disjoint arcs $S_1,S_2,S_3\subset Y/\sim$ homeomorphic to open intervals such that the following holds.
			\begin{itemize}
				\item There exists $\epsilon > 0$ and a $C^5$ homotopy $H\colon S^1\times (-\epsilon,\epsilon)\to Q\times \R$ satisfying the same properties as the homotopy in \ref{item:type0-1-continuation}, except that the map $s\mapsto \pi\circ H_s(z)$ is a homeomorphism onto $U\coloneqq S_1 \cup [\gamma]\cup S_2$. 
				
				\item 	If $[\alpha]\in S_3$, then there exists a $C^4$ embedded M\"{o}bius band $X\subset Q\times \R$ such that, when viewed as subsets of $Q\times \R$, the images of $\gamma$ and $\alpha$ are respectively the middle and boundary circles of $X$, and these images are consistently oriented when given the orientations induced by $\gamma$ and $\alpha$.
				\item For every $N > 0$, there exists a neighborhood $V_N\subset Y/\sim$ of $[\gamma]$ such that $V_N\setminus ([\gamma]\cup S_1\cup S_2\cup S_3)$ contains only orbits with periods greater than $N$.
			\end{itemize}

			\item\label{item:isolated-from-bounded-period-orbits} Assume that $Y$ is a connected component of periodic orbits. If $(x_n,\mu_n)$ is a sequence of points on the images of periodic orbits $\gamma_n$ with $(x_n,\mu_n)\not \in Y$ but $(x_n,\mu_n)\to (x,\mu)\in Y$ as $n\to \infty$, then the periods $\tau_n$ of the $\gamma_n$ satisfy $\tau_n\to \infty$.
		\end{enumerate}		
	\end{Prop}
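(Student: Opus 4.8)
The plan is to derive all four statements from the local description of periodic orbits for generic families recalled in \S\ref{sec:background-generic-gamilies}, together with the flow $\Phi_t$ of the suspended vector field $(f,0)$ on $Q\times\R$, noting that two points of $Y$ are $\sim$-equivalent precisely when one is carried to the other by some $\Phi_t$. For Part~\ref{item:pi-open}, I would observe that the $\sim$-saturation of an open $W\subseteq Y$ is $Y\cap\bigcup_{t\in\R}\Phi_t(W)$, visibly open, so $\pi$ is open. When the periods are bounded by some $T$, the union may be restricted to $t\in[-T,T]$, and a routine limit-point argument using compactness of $[-T,T]$ and continuity of $(x,t)\mapsto\Phi_t(x)$ shows that the $\sim$-saturation of a closed set is closed (so $\pi$ is closed) and that the graph of $\sim$ is closed in $Y\times Y$; the latter, combined with openness of $\pi$, yields by a standard lemma that $Y/\sim$ is Hausdorff.

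For the branch constructions in Parts~\ref{item:type0-1-continuation} and~\ref{item:vertex-nbhd}, I would fix a Poincar\'e section $S$ transverse to $\Gamma$ at a point $x_0$ and a $\mu$-dependent Poincar\'e map $P$ as in \S\ref{sec:background-generic-gamilies}. A type $0$ orbit does not have $+1$ as a Floquet multiplier, so the implicit function theorem produces a unique $C^5$ arc $\sigma$ of fixed points of $P$ with $\sigma(0)=x_0$; a type $1$ orbit produces, via the one-dimensional Lyapunov--Schmidt reduction and its generic saddle--node, two $C^5$ fixed-point branches coalescing at the bifurcation point $x_0$, which I would reparametrize as a single $C^5$ arc $\sigma$ with $\sigma(0)=x_0$; a type $2$ orbit yields a $C^5$ fixed-point arc of $P$ through $x_0$ (again by the implicit function theorem, since $+1$ is not a multiplier) together with a second $C^5$ branch of roughly-double-period orbits from the pitchfork reduction of $P\circ P$. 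Identifying $S^1\cong\R/\Z$, the assignment $([t],s)\mapsto\Phi_{T(s)t}(\sigma(s))$, where $T(s)$ is the minimal period of the orbit through $\sigma(s)$ and $s$ ranges over a small interval $(-\epsilon,\epsilon)$, is a well-defined $C^5$ homotopy $H$ (well-defined on $\R/\Z$ since $T(s)$ is a period), each $H_s$ is a diffeomorphism onto the corresponding periodic orbit, $\pi\circ H_s(z)$ depends only on $s$, and $s\mapsto\pi\circ H_s(z)$ is injective (distinct $s$ label distinct orbits), continuous, and open, hence a homeomorphism onto the branch $U$ (resp.\ onto $S_1\cup[\gamma]\cup S_2$). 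In the type $2$ case the period-doubled branch supplies $S_3$, and I would take $X$ to be the region swept out in $Q\times\R$ between $\Gamma$ and an orbit $\alpha$ with $[\alpha]\in S_3$: because the $-1$ Floquet multiplier reverses the orientation of the normal bundle of $\Gamma$ over one period, $X$ is a $C^4$ embedded M\"obius band with $\Gamma$ as middle circle and $\image(\alpha)$ as boundary circle, and tracking how the doubled branch wraps around $\Gamma$ shows the boundary circle is traversed with degree $+2$ over the middle circle, i.e., $\Gamma$ and $\alpha$ are consistently oriented in the sense of Definition~\ref{def:consist-orient}.

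The third bullets of Parts~\ref{item:type0-1-continuation} and~\ref{item:vertex-nbhd} and all of Part~\ref{item:isolated-from-bounded-period-orbits} I would obtain from a single \emph{isolation lemma}: if $\beta_n$ are periodic orbits with uniformly bounded periods whose base points converge to a point $p$ lying on a (nonstationary, hence type $0$, $1$, or $2$) periodic orbit $\beta_\infty$, then for all large $n$ the orbit $\beta_n$ lies on a local continuation branch of $\beta_\infty$. Its proof is an Arzel\`a--Ascoli argument with continuous dependence on initial conditions: a subsequence of $\beta_n$ converges in $C^1$ to a closed orbit through $p$, which must equal $\image(\beta_\infty)$ traversed finitely many times since distinct periodic orbits are disjoint, so for large $n$ the return point of $\beta_n$ lies near $x_0$ on $S$ and is a fixed point of a fixed iterate of $P$; since $\beta_\infty$ has no root-of-unity Floquet multipliers beyond those prescribed by its type, the implicit function theorem (type $0$), the saddle--node reduction (type $1$), or the pitchfork reduction of $P\circ P$ (type $2$) forces that point onto one of the branches constructed above. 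Applied near $[\gamma]$, this shows that any orbit sufficiently $\sim$-close to $[\gamma]$ with period $\leq N$ lies on $U$ (resp.\ on $[\gamma]\cup S_1\cup S_2\cup S_3$), giving the third bullets; applied in the setting of Part~\ref{item:isolated-from-bounded-period-orbits} with $\beta_\infty\subseteq Y$ a component, the relevant branch is connected and meets $Y$, hence lies in $Y$, contradicting $(x_n,\mu_n)\notin Y$ and forcing $\tau_n\to\infty$. I expect the main obstacle to be the construction of the embedded M\"obius band and the verification of the consistent-orientation (degree $+2$) claim in Part~\ref{item:vertex-nbhd}, which requires carefully tracking the period-doubled branch via the Floquet data and the Lyapunov--Schmidt reduction; the remaining arguments are routine point-set topology and compactness.
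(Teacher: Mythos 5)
Your overall route coincides with the paper's: part \ref{item:pi-open} via saturation by the suspended flow (your closed-graph-plus-open-map argument for Hausdorffness is an equivalent variant of the paper's construction of disjoint saturated neighborhoods), the homotopies in parts \ref{item:type0-1-continuation}--\ref{item:vertex-nbhd} by flowing a Poincar\'e fixed-point arc obtained from the implicit function theorem or the Lyapunov--Schmidt reductions (you are in fact more explicit than the paper, which defers this to the discussion preceding Definition~\ref{def:consist-orient}), and the M\"obius band as the flow-saturation of the period-doubled fixed-point arc of $P\circ P$, with consistent orientation read off from the way that arc wraps twice around $\Gamma$ --- this is exactly the paper's construction of $\widetilde{X}_\mu=\bigcup_t\Phi^t(Z_\mu)$.

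The one place where your argument as written does not close is the key step of your ``isolation lemma,'' which carries the third bullets of parts \ref{item:type0-1-continuation}--\ref{item:vertex-nbhd} and all of part \ref{item:isolated-from-bounded-period-orbits}. After continuous dependence puts the accumulating orbit $\beta_n$ inside a tubular neighborhood of $\image(\beta_\infty)$ and exhibits its return point as a fixed point of $P^{k}$ for some $k\geq 1$ bounded in terms of $N$, you assert that ``the saddle--node reduction (type $1$), or the pitchfork reduction of $P\circ P$ (type $2$) forces that point onto one of the branches constructed above.'' But those reductions control only the zero sets of $P-\id$ and $P\circ P-\id$; they say nothing directly about fixed points of higher iterates $P^{k}$ (equivalently, genuine period-$d$ orbits of $P$ with $d\geq 2$ for type $1$, or $d\geq 3$ for type $2$) that might also accumulate at $(x_0,\mu_0)$. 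Ruling these out is precisely the nontrivial content: one must use the multiplier restrictions in the definitions of the orbit types, either by performing a separate Lyapunov--Schmidt analysis of $P^{k}-\id$ (whose linearization at $(x_0,\mu_0)$ has the same one-dimensional kernel, since no multiplier other than $\pm1$ is a root of unity) and showing its local zero set coincides with the already-constructed branches, or --- as the paper does --- by invoking the virtual-period result \cite[Prop.~3.2]{chow1983periodic}: orbits of period near $kT_0$ accumulating on $\gamma$ force a virtual period of order $k$, hence a periodic point of minimal period $k$ for the linearized Poincar\'e map, which is impossible for $k\geq 2$ (types $0,1$) or $k\geq 3$ (type $2$) given the allowed multipliers. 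With that ingredient supplied, your compactness argument and its application to part \ref{item:isolated-from-bounded-period-orbits} (the local branches are connected, meet $Y$, hence lie in $Y$) go through exactly as in the paper.
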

	
	\begin{proof}
		We begin by proving \ref{item:pi-open}, which is true even without the hypothesis that $f\in \K$.
		Let $\Phi$ be the flow of the vector field $(f,0)$ on $Q\times \R$.
		First note that, for any subset $S\subset Q\times \R$, $\pi^{-1}(\pi(S \cap Y))$ is equal to the intersection $Y \cap \bigcup_{t\in \R}\Phi^t(S)$, and is additionally equal to the intersection $Y \cap \bigcup_{t\in [0,T]}\Phi^t(S)$ if the periods of orbits through $S$ are bounded above by $T$.
		Next, note that $\pi$ is an open (closed) map if and only if, for every open (closed) subset $S\subset Q\times \R$, $\pi^{-1}(\pi(S\cap Y))$ is open (closed) in $Y$. 
		It follows that $\pi$ is open since $$\pi^{-1}(\pi(U\cap Y)) = Y \cap \bigcup_{t\in \R}\Phi^t(U)$$
		is the intersection of an open subset with $Y$ if $U\subset Q\times \R$ is open.
		If the periods of $Y$ are bounded above by $T>0$, then it follows that $\pi$ is closed since $$\pi^{-1}(\pi(C\cap Y)) = Y \cap \bigcup_{t\in [0,T]}\Phi^t(C)$$ is the intersection of a closed subset with $Y$ if $C\subset Q\times \R$ is closed.
		To show that $Y/\sim$ is Hausdorff if the periods of $Y$ have an upper bound $T$, consider any $(x_1,\mu_1), (x_2,\mu_2)\in Y$ belonging to distinct orbits, and let $U_1,U_2\subset Q\times \R$ be disjoint neighborhoods of the images $\Gamma_1,\Gamma_2 \subset Q\times \R$ of the periodic orbits of $(f,0)$ through the $(x_i,\mu_i)$.
		For $i\in \{1,2\}$ we have that $\Phi^{-1}(U_i)$ is an open neighborhood of $\Gamma_i\times \R$ and therefore contains a subset of the form $V_i\times [0,T]$ with $V_i\subset U_i$ an open neighborhood of $\Gamma_i$.
		Hence $\pi^{-1}(\pi(V_1\cap Y)) = Y\cap \bigcup_{t\in [0,T]}\Phi^t(V_1)\subset U_1$ and $\pi^{-1}(\pi(V_2\cap Y)) = Y \cap \bigcup_{t\in [0,T]}\Phi^t(V_2)\subset U_2$ are disjoint open neighborhoods of $\Gamma_1$ and $\Gamma_2$ in $Y$, so it follows that $\pi(Y\cap V_1)$ and $\pi(Y\cap V_2)$ are disjoint neighborhoods of $\pi(\Gamma_1)$ and $\pi(\Gamma_2)$ as desired. 
		This completes the proof of \ref{item:pi-open}.
		
		The existence of homotopies $H$ satisfying the properties claimed in \ref{item:type0-1-continuation} and \ref{item:vertex-nbhd} follows from the discussion preceding Definition \ref{def:consist-orient} and standard techniques from the textbooks cited therein.
		To show that the neighborhoods $V_N$ of \ref{item:type0-1-continuation} and \ref{item:vertex-nbhd} exist, fix any $N > 0$ and let $\gamma$ be a type $0$, $1$, or $2$ orbit with image $\Gamma$ and period $T_0$.
		Assume, to obtain a contradiction, that every neighborhood $V\subset Q\times \R$ of $\Gamma$ contains a point $(x,\mu)$ on a periodic orbit having period less than $N$. 
		By the discussion preceding Definition \ref{def:consist-orient} and continuity of the flow, by taking $V$ to be a sufficiently small tubular neighborhood of $\Gamma$ we may assume that the period $T$ of the orbit through any such $(x,\mu)$ satisfies $2 T_0 - \epsilon \leq  T \leq N$ if $\gamma$ is a type $0$ or 1 orbit, and $3T_0-\epsilon \leq T \leq N$ for the case that $\gamma$ is a type 2 orbit.
		Here $\epsilon > 0$ is some small number which can be chosen to tend to $0$ as the size of the neighborhood $V$ tends to zero.
		It follows that $\gamma$ has a virtual period which is of order at least $2$ if $\gamma$ is type $0$ or $1$ and of order at least $3$ if $\gamma$ is type $2$ \cite[Prop.~3.2]{chow1983periodic}.
		But this contradicts the fact that, by definition, type $0$ and type $1$ orbits have no multipliers which are second or higher roots of unity, and type $2$ orbits have no multipliers which are third or higher roots of unity.
		This shows that $V_N\coloneqq \pi(V\cap Y)$ is a neighborhood satisfying the properties claimed in \ref{item:type0-1-continuation} and \ref{item:vertex-nbhd}; since $N$ was arbitrary, this argument also proves \ref{item:isolated-from-bounded-period-orbits}.
		
		It remains only to prove the M\"{o}bius band claim in \ref{item:vertex-nbhd}.
		Let $\gamma$ be a type 2 orbit with image $\Gamma$ and let $(x_0,\mu_0)\in \Gamma$.
		Without loss of generality, assume that the period-doubling bifurcation corresponding to $\gamma$ is supercritical.
		Let $S\subset Q\times \R$ be a Poincar\'{e} section centered at $(x_0,\mu_0)$ and let $P\colon U\subset S\to S$ be a return map for the flow $\Phi$ of $(f,0)$.
		After shrinking the arc $S_3$ if necessary, the period-doubling proof based on Lyapunov-Schmidt reduction yields $\epsilon > 0$ and a $C^4$ arc $Z\subset U \cap (Q\times [\mu_0, \epsilon))$ of fixed points of $P\circ P$ such that (i) $S_3 \cup [\gamma] = \pi(Z)$, (ii) $P|_Z\colon Z\to Z$ is an orientation-reversing diffeomorphism fixing $(x_0,\mu_0)$, and (iii) for all $\mu \in (\mu_0,\mu_0+\epsilon)$, $Z_\mu\coloneqq Z\cap (Q\times [\mu_0,\mu])$ is a connected $P$-invariant $C^4$ embedded submanifold with boundary $\partial Z_\mu = Z\cap (Q\times \{\mu\})$ consisting of two points.
		Hence clearly $\widetilde{X}_\mu\coloneqq \bigcup_{t\in \R}\Phi^t(Z_\mu)$ is a $C^4$ embedded M\"{o}bius band with middle circle given by $\Gamma$ and consistently oriented boundary circle given by the image of a periodic orbit $\alpha$ with $[\alpha]\in S_3$.
        Additionally, (i) implies that the image of the orbit $\alpha$ corresponding to any $[\alpha]\in S_3$ is the boundary of $\widetilde{X}_\mu$ for some $\mu \in (\mu_0,\mu_0+\epsilon)$.
        This completes the proof.
	\end{proof}

	\subsection{Global continuation for generic families}\label{sec:generic-family-continuation}
	
	In this section, we establish in Lemma \ref{lem:cf-ell-cont-generic-fam} (a slightly strengthened version of) Theorem \ref{th:main-thm} in the special case that $f\in \K$ is a generic family.
	This will enable us to prove Theorem \ref{th:main-thm} by approximating an arbitrary family $f$ by generic families $g\in \K$.

	For convenience, we record here the following standard result.
	\begin{Lem}[Homotopy invariance]\label{lem:link-homotopy-invariance}
		Let $M$ be a smooth manifold and $\cf$ be a $C^1$ closed 1-form on $M$.	
		If $\alpha,\beta\colon S^1\to M$ are $C^1$ maps which are homotopic, then $$\int_{\alpha}\cf= \int_{\beta}\cf.$$	
	\end{Lem}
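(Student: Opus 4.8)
The plan is to apply Stokes' theorem to a homotopy. Choose a homotopy $H\colon S^1\times[0,1]\to M$ with $H(\slot,0)=\alpha$ and $H(\slot,1)=\beta$; after the reduction to smooth data described in the next paragraph one may take $H$ to be $C^\infty$, so that $H^*\cf$ is a $C^1$ $1$-form on the compact oriented surface-with-boundary $S^1\times[0,1]$. Naturality of the exterior derivative gives $d(H^*\cf)=H^*(d\cf)=0$ since $\cf$ is closed, so Stokes' theorem yields
\[
\int_{\beta}\cf-\int_{\alpha}\cf=\int_{\partial(S^1\times[0,1])}H^*\cf=\int_{S^1\times[0,1]}d(H^*\cf)=0,
\]
where the first equality uses the product orientation, for which $\partial(S^1\times[0,1])=\bigl(S^1\times\{1\}\bigr)\sqcup\bigl(-S^1\times\{0\}\bigr)$.

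To make this rigorous given that $\alpha$, $\beta$, and $\cf$ are only $C^1$, I would first reduce to smooth data. Using a Riemannian metric and the exponential map (or Whitney approximation in charts), approximate $\alpha$ and $\beta$ in the $C^1$ topology by $C^\infty$ loops $\tilde\alpha,\tilde\beta\colon S^1\to M$; for sufficiently close approximations, $\tilde\alpha$ is homotopic to $\alpha$ and $\tilde\beta$ to $\beta$ (nearby maps into a manifold are joined by the geodesic homotopy), hence $\tilde\alpha\simeq\tilde\beta$. Since $\tilde\alpha,\tilde\beta$ are smooth and continuously homotopic, they are smoothly homotopic, so the Stokes computation above applies verbatim to give $\int_{\tilde\alpha}\cf=\int_{\tilde\beta}\cf$. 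Because $\omega\mapsto\int_{\omega}\cf=\int_{S^1}\omega^*\cf$ is continuous in the $C^1$ topology on loops (the integrand $t\mapsto\cf_{\omega(t)}(\omega'(t))$ converges uniformly along $C^1$-convergent sequences with image in a fixed compact set), passing to the limit along such approximations gives $\int_{\alpha}\cf=\int_{\beta}\cf$. Alternatively, one can skip smoothing the loops by first replacing $\cf$ with a cohomologous $C^\infty$ closed $1$-form, which exists by Appendix~\ref{app:closed-1-forms} and leaves $\int_{\omega}\cf$ unchanged for every $C^1$ loop $\omega$ since exact forms integrate to zero over loops.

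The only real subtlety — the \emph{main obstacle} — is precisely this regularity bookkeeping: Stokes' theorem and the identity $d\circ H^*=H^*\circ d$ are cleanest for $C^\infty$ data, whereas the hypotheses furnish only $C^1$ objects. Either escape route (approximating the loops, or replacing $\cf$ by a cohomologous smooth form, together with a smooth homotopy) disposes of this in a routine way, after which the entire content of the lemma is the one-line Stokes computation displayed above.
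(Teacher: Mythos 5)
Your proof is correct, and it is essentially the argument the paper intends: the lemma is recorded there as a standard result without proof, and Appendix~\ref{app:closed-1-forms} attributes exactly this homotopy invariance of $\int\cf$ to Stokes' theorem. Your main route (pull $\cf$ back along a smooth homotopy, use $d(H^*\cf)=H^*(d\cf)=0$ and Stokes, and dispose of the $C^1$ regularity by $C^1$-approximating the loops and passing to the limit) is complete; only the sketched ``alternative'' of swapping $\cf$ for a cohomologous $C^\infty$ form does not by itself remove the need to smooth the loops or the homotopy before Stokes applies, but that is a side remark, not a gap in your proof.
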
	
	The following preliminary result is also straightforward.
	We include a proof for convenience. 
		
	\begin{Lem}\label{lem:mobius-c1f-integrals}
		Let $X$ be a $C^1$ M\"{o}bius band (with boundary).
		Let $\Gamma_1$ be the middle circle of the M\"{o}bius band and $\Gamma_2$ the boundary, and assume $\Gamma_1,\Gamma_2$ are consistently oriented (Definition~\ref{def:consist-orient}).
		Then if $\cf$ is any $C^1$ closed $1$-form on $X$,
		\begin{equation}
		\int_{\Gamma_2}\cf = 2\int_{\Gamma_1} \cf.
		\end{equation}  
	\end{Lem}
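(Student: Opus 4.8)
The plan is to use the retraction $\pi\colon X \to \Gamma_1$ from Definition~\ref{def:consist-orient} together with the homotopy invariance of integrals of closed $1$-forms (Lemma~\ref{lem:link-homotopy-invariance}). First I would parametrize $\Gamma_1$ by a loop $c\colon S^1 \to X$ tracing the middle circle once with its given orientation, and parametrize $\Gamma_2$ by a loop $b\colon S^1 \to X$ tracing the boundary circle once with its given orientation. The key geometric fact, encoded in the hypothesis that $\Gamma_1$ and $\Gamma_2$ are consistently oriented, is that $\pi\circ b\colon S^1 \to \Gamma_1$ has degree $+2$, i.e.\ it is homotopic (as a map into $\Gamma_1 \simeq S^1$) to $c$ traversed twice, which I will write as the concatenation $c * c$ (or $c^2$).

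The main steps are then: (1) Since $\pi$ is a retraction of $X$ onto $\Gamma_1$ and $X$ deformation retracts onto $\Gamma_1$, the inclusion $\Gamma_2 \hookrightarrow X$ is homotopic to $\Gamma_2 \xrightarrow{\pi|_{\Gamma_2}} \Gamma_1 \hookrightarrow X$; concretely, the straight-line homotopy $(z,s)\mapsto$ (point a fraction $s$ of the way along the segment from $b(z)$ to $\pi(b(z))$) is a homotopy in $X$ from $b$ to $\pi\circ b$. Hence by Lemma~\ref{lem:link-homotopy-invariance}, $\int_{\Gamma_2}\cf = \int_b \cf = \int_{\pi\circ b}\cf$. (2) Because $\pi\circ b$ has degree $+2$ onto $\Gamma_1$, it is homotopic in $\Gamma_1 \subset X$ to the loop $c*c$ that runs around $\Gamma_1$ twice; applying Lemma~\ref{lem:link-homotopy-invariance} again and then additivity of the integral over a concatenation of paths gives $\int_{\pi\circ b}\cf = \int_{c*c}\cf = 2\int_c \cf = 2\int_{\Gamma_1}\cf$. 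Chaining (1) and (2) yields $\int_{\Gamma_2}\cf = 2\int_{\Gamma_1}\cf$, as claimed.

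A couple of technical remarks on smoothness: Lemma~\ref{lem:link-homotopy-invariance} is stated for $C^1$ maps and homotopies, while the straight-line retraction and the induced homotopies are a priori only continuous (or only as smooth as a chosen trivialization of a tubular neighborhood). This is harmless: one may either invoke the standard fact that continuously homotopic $C^1$ loops are $C^1$-homotopically equivalent (smoothing the homotopy rel endpoints), or simply work in a $C^1$ (indeed $C^\infty$) model of the Möbius band in which $\pi$ and the straight-line homotopy are $C^1$, since the statement is invariant under $C^1$ diffeomorphism and $\int\cf$ depends only on the free homotopy class of a loop. Alternatively, and perhaps most cleanly, one can bypass homotopies of loops altogether by using Stokes' theorem: pull $\cf$ back to $X$, note $d\cf = 0$, and compute $\int_{\partial X}\cf = \int_X d\cf = 0$ — but $\partial X = \Gamma_2$ with its boundary orientation, which need not be the orientation making $\Gamma_1,\Gamma_2$ consistently oriented, so one must still track signs; I would therefore prefer the homotopy argument above as the primary route.

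I expect the only real obstacle to be bookkeeping of orientations: making precise that "consistently oriented" (degree $+2$ for $\pi|_{\Gamma_2}$) is exactly what forces the factor to be $+2$ rather than $-2$, and ensuring the concatenation $c*c$ is taken with the correct orientation of $c$. Everything else is a direct application of Lemma~\ref{lem:link-homotopy-invariance} and elementary properties of line integrals.
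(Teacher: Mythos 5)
Your proposal is correct and follows essentially the same route as the paper: homotope the boundary circle onto the middle circle via the straight-line retraction, then extract the factor $2$ from the degree of $\pi|_{\Gamma_2}$. The only cosmetic difference is that the paper obtains the factor $2$ directly from the degree formula for pullbacks, $\int_{\Gamma_2}(\pi|_{\Gamma_2})^*\iota_1^*\cf = 2\int_{\Gamma_1}\iota_1^*\cf$, and phrases homotopy invariance via equality of the induced maps on $\Hdr^1$, whereas you replace the degree-$2$ map by a loop homotopic to $\Gamma_1$ traversed twice and invoke Lemma~\ref{lem:link-homotopy-invariance} plus additivity over concatenation; your smoothness caveats are also handled in the paper's footnote about $C^1$ closed forms being cohomologous to $C^\infty$ ones.
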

	\begin{proof}
		For $i=1,2$, let $\iota_i\colon \Gamma_i\to X$ be the inclusion.
		Let $\pi\colon X\to \Gamma_1$ be the straight line retraction of $X$ onto $\Gamma_1$.
		Let $h \colon X\times [0,1]\to X$ be the straight-line deformation retraction of $X$ onto $\Gamma_1$, with $h_t\coloneqq h(\slot,t)$, $h_0 = \id_X$, and $h_1 = \iota_1 \circ \pi$.

		By Definition \ref{def:consist-orient}, the degree of $\pi|_{\Gamma_2}\colon \Gamma_2\to \Gamma_1$ is $+2$.
		Hence
		\begin{equation}\label{eq:mobius-c1f-integrals-1}
		\int_{\Gamma_2}(\pi|_{\Gamma_2})^* \cf \coloneqq  \int_{\Gamma_2}(\pi|_{\Gamma_2})^* \iota_1^* \cf = 2 \int_{\Gamma_1}\iota_1^* \cf \eqqcolon 2 \int_{\Gamma_1} \cf.
		\end{equation}
		Since $h_t|_{\Gamma_2}$ yields a homotopy
		\begin{equation*}
		\iota_2 = h_0|_{\Gamma_2} \simeq h_1|_{\Gamma_2} =  \iota_1 \circ \pi|_{\Gamma_2},
		\end{equation*}
		$\iota_2^* \colon \Hdr^1(X)\to \Hdr^1(\Gamma_2)$ and $(\iota_1\circ \pi|_{\Gamma_2})^*\colon \Hdr^1(X)\to \Hdr^1(\Gamma_2)$ are the same map on cohomology.
		It follows that $\iota_2^* \cf = (\iota_1\circ \pi|_{\Gamma_2})^*\cf + dV$ for some exact 1-form $dV$, so $\iota_2^* \cf$ and  $(\iota_1\circ \pi|_{\Gamma_2})^*\cf$ have the same integral over $\Gamma_2$.\footnote{Strictly speaking, we are also using the fact that every $C^1$ closed form is cohomologous to a $C^\infty$ closed form \cite[pp.~61--70]{deRham1984differentiable} since we only assume $\cf\in C^1$.} 
		Since $(\iota_1\circ \pi|_{\Gamma_2})^* = (\pi|_{\Gamma_2})^*\iota_1^*$ we obtain
		\begin{equation*}
		\int_{\Gamma_2}\cf \coloneqq \int_{\Gamma_2}\iota_2^*\cf = \int_{\Gamma_2}(\pi|_{\Gamma_2})^*\iota_1^*\cf  \eqqcolon \int_{\Gamma_2}(\pi|_{\Gamma_2})^*\cf = 2 \int_{\Gamma_1}\cf,
		\end{equation*}
		where the last equality follows from \eqref{eq:mobius-c1f-integrals-1}.
		This completes the proof.
	\end{proof}		
	
	One of the key ideas needed for Lemma \ref{lem:cf-ell-cont-generic-fam} is contained in the following Lemma \ref{lem:A-ell-manifold} which shows that, for a generic family, the periodic orbit components $\widetilde{A}^i_\ell$ of Definition \ref{def:cf-ell-global-cont} are topological 1-manifolds if the periods of $\widetilde{A}^i_\ell$ are uniformly bounded. 	
	
	Recall that $\iota_\mu\colon Q\hookrightarrow Q\times \R$ denotes the inclusion $\iota_\mu(x)=(x,\mu)$ for each $\mu \in \R$.
	
	\begin{Lem}\label{lem:A-ell-manifold}
		Let $f\in \K\subset C^5(Q\times \R,\T Q)$ be a generic family of vector fields and let $\cf$ be a $C^1$ closed 1-form on an open subset $\domcf\subset Q\times \R$.
		Let $A\subset \domcf$ be a connected component of nonstationary periodic orbits of $f|_{\domcf}$, and let the equivalence relation $\sim$ on $A$ be as in Proposition \ref{prop:generic-background}.
		For any $\ell > 0$, let $A_\ell\subset A$ be the subset of points on periodic orbits $\alpha_\mu$ with $\left|\int_{\alpha_\mu} \iota_\mu^*\cf\right| = \ell$ and $A_{\leq \ell}$ the subset with $\left|\int_{\alpha_\mu} \iota_\mu^*\cf\right| \leq \ell$.
		
		Fix an open subset $U\subset A/\sim$ and $\ell > 0$, and assume that $U\cap (A_\ell/\sim) \neq \varnothing$.
		Let $V_\ell$ be a connected component of $U\cap (A_\ell/\sim)$ and let $V_{\leq \ell}$ be the unique component of $U\cap (A_{\leq\ell}/\sim)$ containing $V_\ell$.
		Assume that the periods of orbits belonging to $V_{\leq \ell}$ have a uniform upper bound.
		Then
		\begin{enumerate}
			\item $V_{\ell}$ is a topological $1$-manifold (without boundary).
			\item If $V_\ell = V_{\leq \ell}$, then $V_\ell$ is also closed as a subset of $U$.
		\end{enumerate}
	\end{Lem}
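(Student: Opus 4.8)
My plan is to reduce the lemma to an analysis of the function $h\colon A/{\sim}\to[0,\infty)$, $h([\alpha_\mu])\coloneqq\left|\int_{\alpha_\mu}\iota_\mu^*\cf\right|$ (well defined, since all points of a fixed periodic orbit give the same value), so that $A_\ell/{\sim}=h^{-1}(\ell)$ and $A_{\leq\ell}/{\sim}=h^{-1}([0,\ell])$. The first step is to read off the local behavior of $h$ and of $A/{\sim}$ near orbits of period bounded by a fixed uniform bound $N_0$ for $V_{\leq\ell}$, using Proposition~\ref{prop:generic-background}. Since $A\subset\domcf$, the homotopies $H$ of Proposition~\ref{prop:generic-background}\eqref{item:type0-1-continuation},\eqref{item:vertex-nbhd} take values in $\domcf$, so Lemma~\ref{lem:link-homotopy-invariance} makes $h$ constant along the arc of type~$0$/$1$ orbits of part~\eqref{item:type0-1-continuation} and along the ``short'' arc $S_1\cup\{[\gamma]\}\cup S_2$ through a type~$2$ orbit $[\gamma]$ of part~\eqref{item:vertex-nbhd}; whereas for each $[\alpha]$ on the ``long'' arc $S_3$ of part~\eqref{item:vertex-nbhd}, $[\alpha]$ is the consistently oriented boundary circle of a $C^4$ embedded M\"obius band $X$ with middle circle $\gamma$ (Definition~\ref{def:consist-orient}), and $X$, being a union of periodic orbits in $A\subset\domcf$, lies in $\domcf$, so Lemma~\ref{lem:mobius-c1f-integrals} gives $h([\alpha])=2\,h([\gamma])$ for all $[\alpha]\in S_3$. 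Finally, combining the ``$V_N\setminus(\text{arcs})$ has periods $>N$'' clauses of Proposition~\ref{prop:generic-background}\eqref{item:type0-1-continuation},\eqref{item:vertex-nbhd}, lower semicontinuity of the period near type~$0$,$1$,$2$ orbits (immediate from the descriptions in \S\ref{sec:background-generic-gamilies}), and Proposition~\ref{prop:generic-background}\eqref{item:pi-open} applied to bounded-period orbits, I obtain: near any orbit of period $\leq N_0$, the space $A/{\sim}$ is Hausdorff, second countable, and coincides with either an open arc (type~$0$/$1$) or the union $S_1\cup\{[\gamma]\}\cup S_2\cup S_3$ of three arcs through a type~$2$ orbit $[\gamma]$.

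For part~(1): let $[\gamma]\in V_\ell$, so $h([\gamma])=\ell$, and note that in the type~$2$ case $2\,h([\gamma])=2\ell\neq\ell$ because $\ell>0$. If $[\gamma]$ is of type~$0$ or $1$, then a neighborhood of $[\gamma]$ in $A/{\sim}$ is an open arc on which $h\equiv\ell$, which is therefore a $1$-manifold chart for $V_\ell=h^{-1}(\ell)$ at $[\gamma]$. If $[\gamma]$ is of type~$2$, a neighborhood in $A/{\sim}$ is $S_1\cup\{[\gamma]\}\cup S_2\cup S_3$ with $h\equiv\ell$ on the open arc $S_1\cup\{[\gamma]\}\cup S_2$ and $h\equiv2\ell$ on $S_3$; intersecting with $h^{-1}(\ell)$ removes $S_3$ entirely and leaves the open arc, again a chart. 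Hence every point of $V_\ell$ has an open-arc neighborhood, and together with the Hausdorff and second-countable properties this shows $V_\ell$ is a topological $1$-manifold without boundary.

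For part~(2), assume $V_\ell=V_{\leq\ell}$ and let $[\gamma_n]\in V_\ell$ with $[\gamma_n]\to[\gamma]\in U$; I must show $[\gamma]\in V_\ell$. By lower semicontinuity of the period, $[\gamma]$ has period $\leq N_0$ and is of type~$0$, $1$, or $2$, so for large $n$ the $[\gamma_n]$ lie in the local model of $A/{\sim}$ at $[\gamma]$ above. If $[\gamma]$ is of type~$0$ or $1$: $h$ is continuous at $[\gamma]$, so $h([\gamma])=\lim h([\gamma_n])=\ell$, and the local open arc lies in $A_\ell/{\sim}$ and connects $[\gamma]$ to the $[\gamma_n]$ inside $U$, whence $[\gamma]\in V_\ell$. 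If $[\gamma]$ is of type~$2$ with $h([\gamma])=\ell$: the $[\gamma_n]$ near $[\gamma]$ with $h=\ell$ must lie on $S_1\cup S_2$ (since $h\equiv2\ell$ on $S_3$), so $S_1\cup\{[\gamma]\}\cup S_2$ connects $[\gamma]$ to the $[\gamma_n]$ inside $A_\ell/{\sim}\cap U$ and again $[\gamma]\in V_\ell$. The only remaining possibility is $[\gamma]$ of type~$2$ with $2\,h([\gamma])=\ell$ and the $[\gamma_n]$ on $S_3$: then $S_1\cup\{[\gamma]\}\cup S_2\cup S_3$ is a connected subset of $A_{\leq\ell}/{\sim}$ (its $h$-values being $\ell/2$ and $\ell$) connecting $[\gamma]$ to the $[\gamma_n]\in V_\ell=V_{\leq\ell}$ inside $U$, so $[\gamma]\in V_{\leq\ell}=V_\ell\subset h^{-1}(\ell)$ and $h([\gamma])=\ell$, contradicting $h([\gamma])=\ell/2>0$; hence this case cannot occur. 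Thus every limit point in $U$ of $V_\ell$ lies in $V_\ell$, i.e.\ $V_\ell$ is closed in $U$.

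The crux --- and the step I expect to be the main obstacle --- is the dichotomy established in the first paragraph: $h$ is \emph{constant} along the two short branches at a type~$2$ orbit but exactly \emph{doubles} along the period-doubled branch $S_3$, so that whenever a type~$2$ orbit (the only source of trivalent branch points in the orbit diagram of a generic family) lies in the level set $h^{-1}(\ell)$, its branch $S_3$ automatically lies in $h^{-1}(2\ell)$ and so is invisible in $V_\ell$. Making this rigorous requires knowing the M\"obius band $X$ lies in $\domcf$ (so that Lemma~\ref{lem:mobius-c1f-integrals} applies) and careful tracking of orientations; the rest is bookkeeping with Proposition~\ref{prop:generic-background}, in which the uniform period bound on $V_{\leq\ell}$ is used precisely to suppress extraneous high-period sheets near the orbits in question.
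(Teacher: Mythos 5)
Your strategy is essentially the paper's: Hausdorffness and second countability via part \ref{item:pi-open} of Proposition \ref{prop:generic-background} applied to the bounded-period orbits, local charts via the arc structure of parts \ref{item:type0-1-continuation} and \ref{item:vertex-nbhd} combined with Lemmas \ref{lem:link-homotopy-invariance} and \ref{lem:mobius-c1f-integrals}, and the crux you identify---that bounded-period orbits near a given orbit have $h$-value in $\{h([\gamma]),2h([\gamma])\}$, so that $\ell>0$ makes the period-doubled branch invisible in the level set---is exactly the mechanism the paper uses. Two steps, however, need tightening.

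First, for part (1) you assert that near any orbit of period $\leq N_0$ the space $A/\sim$ is Hausdorff and ``coincides'' with the arcs. Neither is literally true: $V_N\setminus(\text{arcs})$ is only asserted to consist of orbits of period $>N$, not to be empty, and \emph{local} Hausdorffness would not suffice for the manifold conclusion (compare the line with two origins, which is locally Euclidean, second countable and locally Hausdorff but not a manifold). What you need, and what part \ref{item:pi-open} of Proposition \ref{prop:generic-background} actually gives when applied to the union $\widetilde{V}_{\leq\ell}$ of orbits in $V_{\leq\ell}$ (whose periods are bounded by hypothesis), is that $V_{\leq\ell}$ itself is Hausdorff and second countable; $V_\ell$ then inherits both as a subspace. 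Your argument should be routed through that global statement rather than through a local model of $A/\sim$.

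Second, in part (2) the case of a type 2 limit $[\gamma]$ with the $[\gamma_n]$ on $S_3$ claims that $S_1\cup\{[\gamma]\}\cup S_2\cup S_3$ connects $[\gamma]$ to the $[\gamma_n]$ ``inside $U$.'' The arcs need not lie in $U$, and after shrinking them you would need to know that the $[\gamma_n]$ accumulate on the end of $S_3$ adjacent to $[\gamma]$---a fact not contained in the statement of part \ref{item:vertex-nbhd} of Proposition \ref{prop:generic-background} (it follows only from its proof, via the monotone-in-$\mu$ parametrization of $S_3$). There is a clean repair, which is in fact how the paper argues: in all three of your cases the dichotomy gives $h([\gamma])\in\{\ell,\ell/2\}$, hence $[\gamma]\in U\cap(A_{\leq\ell}/\sim)$; since $V_{\leq\ell}$ is a connected component of $U\cap(A_{\leq\ell}/\sim)$ it is closed in that subspace, so being a limit of points of $V_{\leq\ell}$ forces $[\gamma]\in V_{\leq\ell}=V_\ell\subset h^{-1}(\ell)$. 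This finishes your cases (a)--(b) without any connecting-arc bookkeeping and yields the contradiction in case (c) immediately. (Equivalently, the paper proves closedness by separating every point of $U$ with $h>\ell$ from $V_{\leq\ell}$ using the $\{h,2h\}$ dichotomy and the period bound, and handles points with $h\leq\ell$ by the component-is-closed observation.) With these adjustments your proof is correct and coincides with the paper's.
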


	\begin{proof}
		Let $\widetilde{V}_{\leq \ell}$ and $\widetilde{V}_\ell$ be the union of orbits in $A$ with $(\widetilde{V}_{\leq \ell}/\sim) = V_{\leq \ell}$ and $(\widetilde{V}_\ell/\sim) = V_\ell$, respectively.  Define the quotient map $\pi\colon \widetilde{V}_{\leq \ell}\to V_{\leq \ell}$.
		Since the periods of orbits in $\widetilde{V}_{\leq \ell}$ are bounded above, part \ref{item:pi-open} of Proposition \ref{prop:generic-background} implies that $\pi$ is open, $\pi$ is closed, and $V_{\leq \ell}$ is Hausdorff; since $\pi$ is open and $\widetilde{V}_{\leq \ell}$ is second countable, so is $V_{\leq \ell}$.
		It follows that the subspace $V_{\ell}\subset V_{\leq \ell}$ is also Hausdorff and second countable.

		We now show that $V_\ell$ is a topological $1$-manifold.
		Since we have already shown that $V_\ell$ is Hausdorff and second countable, we need only establish that $V_\ell$ is locally Euclidean of dimension $1$.	
		If $\alpha$ is a type $0$ or type $1$ orbit with $[\alpha]\in V_\ell$, then Lemma \ref{lem:link-homotopy-invariance}, period-boundedness, and part \ref{item:type0-1-continuation} of Proposition \ref{prop:generic-background} imply that $[\alpha]$ has a neighborhood in $U$ homeomorphic to an open interval and contained in $V_\ell$.
		If instead $\alpha$ is a type $2$ orbit, then Lemmas \ref{lem:link-homotopy-invariance} and \ref{lem:mobius-c1f-integrals}, period-boundedness, and part \ref{item:vertex-nbhd} of Proposition \ref{prop:generic-background} imply (since we are assuming $\ell >0$) that $[\alpha]$ again has a neighborhood in $U$ homeomorphic to an open interval and contained in $V_\ell$. 
		This shows that $V_\ell$ is locally Euclidean and completes the proof that $V_\ell$ is a topological $1$-manifold.
		
		We next show that $V_{\leq \ell}$ is closed as a subset of $U$.
		Being a connected component of $U\cap(A_{\leq\ell}/\sim)$, $V_{\leq\ell}$ is automatically closed in $U\cap(A_{\leq\ell}/\sim)$, so any $[\beta] \in U\cap(A_{\leq\ell}/\sim)\setminus V_{\leq \ell}$ has a neighborhood disjoint from $V_{\leq \ell}$.
		It remains only to show that any point in $U$ not in $A_{\leq\ell}/\sim$ has a neighborhood disjoint from $V_{\leq \ell}$.
		Fix $[\beta]\in U\setminus(A_{\leq\ell}/\sim)$ with $\beta$ an orbit of $f_\mu$, so that $\left|\int_{\beta}\iota_{\mu}^* \cf\right|	= \ell' > \ell$.	
		If $\beta$ is a type 0 or type 1 orbit, then Lemma \ref{lem:link-homotopy-invariance}, period-boundedness of $V_{\leq \ell}$, and part \ref{item:type0-1-continuation} of Proposition \ref{prop:generic-background} imply that $[\beta]$ has a neighborhood in $U$ disjoint from $V_{\leq\ell}$.
		If instead $\beta$ is a type 2 orbit, then part \ref{item:vertex-nbhd} of Proposition \ref{prop:generic-background} implies that, for each $N > 0$, $[\beta]$ has a neighborhood $W\subset U$ such that every $[\gamma]\in W$ having period smaller than $N$ satisfies $\left|\int_{\gamma}\iota_{\mu'}^*\cf\right|\in \{\ell',2\ell'\}$, where $\mu'$ is such that $\gamma$ is an orbit of $f_{\mu'}$.
		Taking $N$ to be larger than an upper bound for the periods of $V_{\leq \ell}$ and using the fact that $\ell' > \ell$, it follows that $W \cap V_{\leq\ell} = \varnothing$.
		This completes the proof that $V_{\leq \ell}$ is closed in $U$.				
		
		Since $V_{\leq \ell}$ is closed in $U$, the additional assumption that $V_\ell = V_{\leq \ell}$ implies that $V_\ell$ is closed in $U$.
		This completes the proof.
	\end{proof}
	
    \begin{figure}
    	\centering
    	\def\svgwidth{.5\columnwidth}
    	\import{figs/}{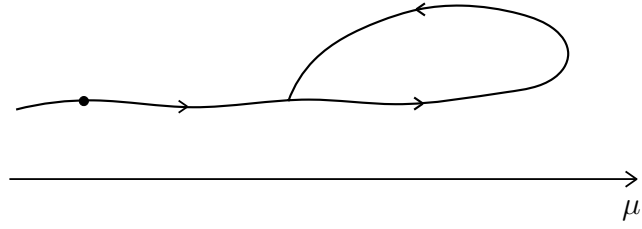}
    	\caption{The main idea behind Lemma \ref{lem:A-ell-manifold} is that its hypotheses imply that portions of orbit diagrams such as the one shown above cannot occur if the corresponding periodic orbits are contained in $\domcf$ and have uniformly bounded periods.
    	    In more detail: if the periodic orbit $\gamma$ represented by the above dot at $\mu_0$ satisfies $\ell\coloneqq \left|\int_{\gamma} \iota_{\mu_0}^*\cf\right| > 0 $, then the orbit diagram above cannot occur for a generic one-parameter family.
    		To see this, let $\beta$ be a periodic orbit represented by a point in the top of the loop at $\mu_1$.
    		There is a homotopy of periodic orbits corresponding to the path indicated by the arrows above, so homotopy invariance (Lemma \ref{lem:link-homotopy-invariance}) implies that $\left|\int_{\beta} \iota_{\mu_1}^*\cf\right|=\ell$.
    		On the other hand, applying Lemma \ref{lem:mobius-c1f-integrals} to the branch of bifurcating orbits near the type 2 orbit implies that $\left|\int_{\beta} \iota_{\mu_1}^* \cf\right|=2\ell\neq \ell$, a contradiction.
    	}\label{fig:lem-mfld-b}
    \end{figure}  	
		
	    \begin{figure}
	    	\centering
	    	\def\svgwidth{.5\columnwidth}
	    	\import{figs/}{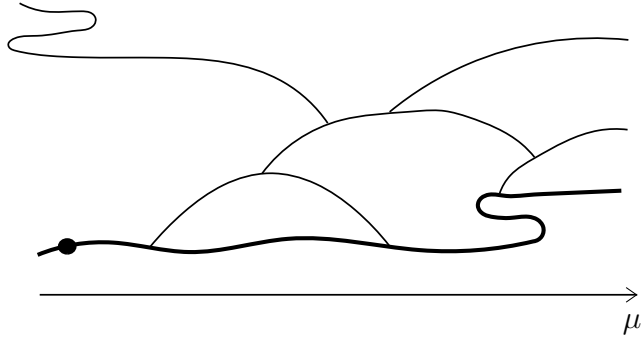}
	    	\caption{A portion of an orbit diagram for a component $A\subset \domcf$ of a generic family having uniformly bounded periods.
	    	In the notation of Lemma \ref{lem:A-ell-manifold}, the thick curve represents a portion of a set $V_\ell$ satisfying all hypotheses (including $V_\ell = V_{\leq \ell}$) of Lemma \ref{lem:A-ell-manifold} (here $U = A/\sim$, where $\sim$ is the equivalence relation defined in Proposition \ref{prop:generic-background}.). 
	    	Since $\ell > 0$, all periodic orbits $\alpha_\mu$ represented by points on the thick curve also satisfy $\left|\int_{\alpha_\mu} \iota_\mu^*\cf\right|=\ell$.
	    	$V_\ell$ is a topological $1$-manifold since the situation depicted in Figure \ref{fig:lem-mfld-b} cannot occur, and $V_\ell$ is a closed subset of $A/\sim$ since parts \ref{item:type0-1-continuation} and \ref{item:vertex-nbhd} of Proposition \ref{prop:generic-background} imply that the function $[\alpha_\mu]\mapsto\left|\int_{\alpha_\mu} \iota_\mu^*\cf\right|$ is lower semi-continuous	on $(A/\sim)$. 
	    	}\label{fig:lem-mfld-a}
	    \end{figure}  
	
	We now state the main result of this section. 
	Lemma \ref{lem:cf-ell-cont-generic-fam} yields a result for general families slightly stronger than Theorem \ref{th:main-thm}, because it does not require the hypothesis that $+1$ is not a Floquet multiplier of the periodic orbit $\gamma$.
	\begin{Lem}[$(\cf,\ell)$-global continuability for generic families]\label{lem:cf-ell-cont-generic-fam}
		Let $f\in \K\subset C^5(Q\times \R,\T Q)$ be a generic family of vector fields, and let $\cf$ be a $C^1$ closed 1-form on an open subset $\domcf\subset Q\times \R$.
		Let $A\subset \domcf$ be a connected component of nonstationary periodic orbits of $f|_{\domcf}$.
		Let $\gamma$ be a periodic orbit for some $f_{\mu_0}$ with image $\Gamma$ satisfying $\Gamma\times \{\mu_0\}\subset A$, define $\ell\coloneqq \left|\int_{\gamma}\iota_{\mu_0}^*\cf\right|$, and assume $\ell > 0$.		
		Then $\gamma$ is $(\cf,\ell)$-globally continuable. 
	\end{Lem}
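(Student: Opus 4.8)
The plan is to argue by contraposition: assuming $\gamma$ is \emph{not} $(\cf,\ell)$-globally continuable, I will produce a compact topological $1$-manifold with exactly one boundary point, which is absurd. Negating Definition~\ref{def:cf-ell-global-cont}, the assumption furnishes a connected component $\widetilde{A}_{\leq\ell}^1$ of $\widetilde{A}_{\leq\ell}\setminus\Gamma$ and a connected component $\widetilde{A}_\ell^1$ of $\widetilde{A}_\ell\setminus\Gamma$ with $\widetilde{A}_\ell^1\subset\widetilde{A}_{\leq\ell}^1$ such that: (a) $\widetilde{A}_\ell^1$ is contained in some compact $K\subset\domcf$; (b) the closure of $\widetilde{A}_\ell^1$ in $\domcf$ contains no generalized center; (c) the periods of orbits in $\widetilde{A}_\ell^1$ are uniformly bounded; (d) $\widetilde{A}_\ell^1=\widetilde{A}_{\leq\ell}^1$; and moreover $\widetilde{A}_{\leq\ell}\setminus\Gamma$ is disconnected.

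\emph{Step 1 (the closure of $\widetilde{A}_\ell^1$).} I would first show that $Z\coloneqq\cl(\widetilde{A}_\ell^1)$, the closure taken in $\domcf$, equals $\widetilde{A}_\ell^1\cup\Gamma$ and is a compact subset of $A$ consisting of nonstationary periodic orbits with uniformly bounded periods. Genericity of $f$ together with (b) rules out stationary points in $Z$, since for $f\in\K$ the only stationary points on which periodic orbits accumulate are generalized centers. If $p\in Z\setminus\widetilde{A}_\ell^1$, then by (c) and continuous dependence on initial conditions $p$ lies on a periodic orbit $\beta$, and the orbits of $\widetilde{A}_\ell^1$ approaching $p$ converge in $C^1$ to a $k$-fold traversal of $\beta$ for some integer $k\geq 1$; by homotopy invariance (Lemma~\ref{lem:link-homotopy-invariance}) their common $\cf$-integral $\pm\ell$ passes to the limit $k\int_\beta\iota_\mu^*\cf$, so $\left|\int_\beta\iota_\mu^*\cf\right|=\ell/k\leq\ell$ and hence $p\in A_{\leq\ell}$. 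Since $\widetilde{A}_{\leq\ell}$ is a connected component of $A_{\leq\ell}$ it is closed in $A_{\leq\ell}$, so $p\in\widetilde{A}_{\leq\ell}$; and since $\widetilde{A}_{\leq\ell}^1$ is closed in $\widetilde{A}_{\leq\ell}\setminus\Gamma$ we obtain $p\in\widetilde{A}_{\leq\ell}^1\cup\Gamma=\widetilde{A}_\ell^1\cup\Gamma$ by (d). Compactness of $Z$ then follows from (a) together with compactness of $\Gamma$.

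\emph{Steps 2--3 (quotient, Lemma~\ref{lem:A-ell-manifold}, and the parity contradiction).} Working in the quotient $A/\sim$ of Proposition~\ref{prop:generic-background} with quotient map $\pi$, a routine disjoint-neighborhood argument shows $\{[\gamma]\}$ is closed in $A/\sim$, so $U\coloneqq(A/\sim)\setminus\{[\gamma]\}$ is open and $U\cap(A_\ell/\sim)\ni V_\ell\coloneqq\pi(\widetilde{A}_\ell^1)$ is nonempty. Since $\pi$ is open with connected fibers and $\widetilde{A}_\ell,\widetilde{A}_{\leq\ell}$ are saturated connected components of $A_\ell,A_{\leq\ell}$, one checks that $V_\ell$ is a connected component of $U\cap(A_\ell/\sim)$ and $V_{\leq\ell}\coloneqq\pi(\widetilde{A}_{\leq\ell}^1)$ is the component of $U\cap(A_{\leq\ell}/\sim)$ containing it; by (d) $V_\ell=V_{\leq\ell}$, and by (c) the periods of $V_{\leq\ell}$ are uniformly bounded, so Lemma~\ref{lem:A-ell-manifold} applies and yields that $V_\ell$ is a topological $1$-manifold without boundary which is closed in $U$, while $\pi(Z)=V_\ell\cup\{[\gamma]\}$ is compact. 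Because $\widetilde{A}_{\leq\ell}$ is connected while $\widetilde{A}_{\leq\ell}\setminus\Gamma$ is disconnected, $\Gamma$ lies in the closure of $\widetilde{A}_{\leq\ell}^1$, so $[\gamma]\in\cl(V_\ell)$ and $V_\ell\cup\{[\gamma]\}$ is a compact space containing the boundaryless $1$-manifold $V_\ell$ as a dense open subset. Near $[\gamma]$ the orbits of $A_\ell$ of bounded period are exactly those on the ``short'' continuation branch of orbits homotopic to $\gamma$ (Proposition~\ref{prop:generic-background}); when $\gamma$ is a type~$2$ orbit, Lemma~\ref{lem:mobius-c1f-integrals} and $\ell>0$ exclude the period-doubled branch, whose orbits carry $\cf$-integral $2\ell\neq\ell$. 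Since $V_\ell$ has bounded period by (c), near $[\gamma]$ it meets only this short branch, on which $[\gamma]$ is an interior point; as $V_\ell$ is a connected component of $(A_\ell/\sim)\setminus\{[\gamma]\}$ accumulating at $[\gamma]$, it contains exactly one of the two half-branches, so $V_\ell\cup\{[\gamma]\}$ is a half-open interval near $[\gamma]$. Thus $V_\ell\cup\{[\gamma]\}$ is a compact $1$-manifold with boundary whose boundary is the single point $[\gamma]$ --- contradicting the fact that the boundary of a compact $1$-manifold has an even number of points.

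I expect the main obstacle to be the last part of Step~3: showing that $V_\ell\cup\{[\gamma]\}$ is genuinely a $1$-manifold \emph{with} boundary at $[\gamma]$ rather than, say, a circle. This requires carefully combining (c), (d), the disconnectedness of $\widetilde{A}_{\leq\ell}\setminus\Gamma$, and the local description near orbits of nonzero $\cf$-integral from Proposition~\ref{prop:generic-background} and Lemma~\ref{lem:mobius-c1f-integrals}; in particular one must rule out configurations in which a loop of $\cf$-integral-$\ell$ orbits through $\gamma$ coexists with competing high-period branches of $A_{\leq\ell}$ at $\gamma$, and show that $V_\ell$ cannot accumulate at $[\gamma]$ from both sides of the short branch (which would in fact force $\widetilde{A}_{\leq\ell}\setminus\Gamma$ to be connected, against hypothesis). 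The remaining steps are point-set topology together with the structural facts about generic families already packaged in Proposition~\ref{prop:generic-background} and Lemmas~\ref{lem:link-homotopy-invariance}, \ref{lem:A-ell-manifold}, and \ref{lem:mobius-c1f-integrals}; Theorem~\ref{th:main-thm} then follows since the hypothesis that $+1$ is not a Floquet multiplier of $\gamma$ is only needed in the passage from generic to arbitrary families, carried out separately.
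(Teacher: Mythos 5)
Your overall strategy coincides with the paper's proof of Lemma~\ref{lem:cf-ell-cont-generic-fam}: negate Definition~\ref{def:cf-ell-global-cont}, use Lemma~\ref{lem:A-ell-manifold} (hence Proposition~\ref{prop:generic-background} and Lemmas~\ref{lem:link-homotopy-invariance}, \ref{lem:mobius-c1f-integrals}) to make $\widetilde{A}^1_\ell/\sim$ a boundaryless topological $1$-manifold closed in $(A\setminus\Gamma)/\sim$, prove that $\widetilde{A}^1_\ell\cup\Gamma$ is closed in $\domcf$ and hence compact by a limit-orbit argument (bounded periods, no generalized centers, lower semicontinuity of $\left|\int\iota_\mu^*\cf\right|$), and contradict the classification of $1$-manifolds; your ``compact $1$-manifold with a single boundary point'' is the same parity obstruction the paper phrases as ``$(\widetilde{A}^1_\ell\cup\Gamma)/\sim$ is homeomorphic to $[0,1)$ yet compact.'' Your Step~1 and your observation that the $+1$-multiplier hypothesis is only needed for the non-generic approximation step are both consistent with the paper.

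The one genuine problem is the parenthetical you lean on to dispose of your self-identified main obstacle: the claim that if $V_\ell$ accumulated at $[\gamma]$ from both sides of the short branch then $\widetilde{A}_{\leq\ell}\setminus\Gamma$ would be connected. That implication does not follow. Two-sided accumulation only says that both local half-branches of the continuation of $\gamma$ lie in the single component $\widetilde{A}^1_{\leq\ell}=\widetilde{A}^1_\ell$; the disconnectedness of $\widetilde{A}_{\leq\ell}\setminus\Gamma$ could perfectly well be witnessed by a \emph{different} component, for instance one made up of orbits of unbounded (or merely large) period accumulating on $\Gamma$ or attached to $\widetilde{A}_{\leq\ell}$ away from the short branch --- nothing in conditions (a)--(d) or in Proposition~\ref{prop:generic-background} forbids such components, since the period bound (c) applies only to $\widetilde{A}^1_\ell$ and the neighborhoods $V_N$ control only bounded-period orbits. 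So ``both sides $\Rightarrow$ connected'' is not a valid patch, and with it your exclusion of the circle configuration (no boundary point at $[\gamma]$, hence no parity contradiction) is unsupported. Note that the paper does not argue via sides of the branch at all: having shown $\widetilde{A}^1_\ell/\sim$ is a connected boundaryless $1$-manifold, closed in $(A\setminus\Gamma)/\sim$ and noncompact with $[\gamma]$ its only possible external limit point, it concludes $(\widetilde{A}^1_\ell\cup\Gamma)/\sim\cong[0,1)$ directly from the classification theorem and then contradicts compactness; the half-open local structure at $[\gamma]$ is absorbed into that step rather than deduced from the disconnectedness hypothesis. To complete your write-up you should either follow that route or supply an actual argument that only one half-branch can lie in $\widetilde{A}^1_\ell$; the disconnectedness of $\widetilde{A}_{\leq\ell}\setminus\Gamma$ alone will not do it.
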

	\begin{Rem}
		The following proof is similar in spirit to the proof of \cite[Thm~2.2]{alligood1984families} with  ``orbits $\alpha_\mu$ satisfying $\left|\int_{\alpha_\mu}\iota_{\mu}^*\cf\right|=\ell$'' playing the role of ``non-M\"{o}bius orbits.'' (Recall that a M\"{o}bius orbit is a periodic orbit which has an odd number of Floquet multipliers in $(-\infty,1)$ and which additionally has no multiplier equal to $-1$.) 
	\end{Rem}
        
	\begin{proof}	 
		We use the notation of Definition \ref{def:cf-ell-global-cont}, and identify $\Gamma$ with $\Gamma \times \{\mu_0\} \subset Q\times \R$ in the following.
		
		Assume that $\gamma$ is not $(\cf,\ell)$-globally continuable. 
		Then $\widetilde{A}_{\leq \ell}\setminus \Gamma$ is disconnected, one of the components $\widetilde{A}_{\leq \ell}^1$ of $\widetilde{A}_{\leq \ell}\setminus \Gamma$ is equal to a component $\widetilde{A}_\ell^1$ of $\widetilde{A}_\ell\setminus \Gamma$, the closure $\cl(\widetilde{A}_\ell^1)$ of $\widetilde{A}_\ell^1$ in $\domcf$ is compact and contains no generalized centers, and the periods of orbits in $\widetilde{A}_\ell^1$ have a uniform upper bound.

		Since $(A\setminus \Gamma)/\sim$ is an open subset of $(A/\sim)$, Lemma \ref{lem:A-ell-manifold} implies that $(\widetilde{A}^1_\ell/\sim)\subset (A\setminus \Gamma)/\sim$ is a topological 1-manifold which is closed as a subset of $(A\setminus \Gamma)/\sim$.
		$(\widetilde{A}^1_\ell/\sim)$ is not compact because it has the sole limit point $[\gamma]\in (A/\sim)\setminus (\widetilde{A}^1_\ell/\sim)$.
		Therefore, $(\widetilde{A}^1_\ell\cup \Gamma)/\sim$ is closed as a subset of $A/\sim$, and the classification theorem for topological $1$-manifolds implies that $(\widetilde{A}^1_\ell\cup \Gamma)/\sim$ is homeomorphic to $[0,1)$.
	
        To complete the proof it suffices to show that $\widetilde{A}^1_\ell\cup \Gamma$ is closed in $\domcf$ and therefore compact, because this would imply that $(\widetilde{A}^1_{\ell}\cup \Gamma)/\sim$ is compact, contradicting the fact that $(\widetilde{A}^1_\ell\cup \Gamma)/\sim$ is homeomorphic to $[0,1)$.
        So let $(x,\mu)\not \in (\widetilde{A}^1_\ell\cup \Gamma)$ be a limit point of $(\widetilde{A}^1_\ell \cup \Gamma)$ in $\domcf$.
        Then there is a sequence $(x_n,\mu_n)$ in $\widetilde{A}^1_\ell \cup \Gamma$ of points on periodic orbits $\gamma_n$ with $(x_n,\mu_n) \to (x,\mu)$.
        Let $\tau_n$ be the period of $\gamma_n$.
		Since we are assuming that the periods of $\widetilde{A}^1_\ell$ are bounded, we may pass to a subsequence and assume that $\tau_n\to \tau > 0$.
		Letting $\Phi_{\mu_n}$ be the flow of $f_{\mu_n}$, by continuity we have
		\begin{equation}
		\Phi^\tau_\mu(x) = \lim_{n\to \infty}\Phi^{\tau_n}_{\mu_n}(x_n) = \lim_{n\to\infty}x_n = x.
		\end{equation}
		Since we are assuming that the closure $\cl(\widetilde{A}^1_\ell)$ of $\widetilde{A}^1_\ell$ in $\domcf$ contains no generalized centers, it follows that $(x,\mu)\in \cl(\widetilde{A}^1_\ell \cup \Gamma)$ must be a nonstationary periodic orbit for $f_\mu$.\footnote{This is because, if $x$ were an equilibrium for $f_\mu$, then the boundedness of the $\tau_n$ would imply that $x$ is a generalized center for $f_\mu$. This is true even if $f\in C^1$, and follows from \cite[Prop.~3.2]{chow1983periodic}; see also \cite[Cor.~4.6]{fiedler1988global}. A proof for the case $f\in C^2$ is given in \cite[Prop.~3.1]{mallet1982snakes}.}
		It cannot be the case that $(x,\mu)$ belongs to a component $B\subset \domcf$ of periodic orbits of $f|_{\domcf}$ different from $A$, because this would contradict the fact that $A$ is closed in the space of periodic orbits of $f|_{\domcf}$ (being a connected component).
		Hence $(x,\mu)\in A$, and since $\widetilde{A}^1_\ell \cup \Gamma$ is closed in $A$ it follows that $(x,\mu)\in \widetilde{A}^1_\ell \cup \Gamma$.
		Hence $\widetilde{A}^1_\ell\cup \Gamma$ is closed in $\domcf$.
		As discussed above, this implies a contradiction and completes the proof.
	\end{proof}

\section{Non-generic families}\label{sec:non-generic-families}			
    In this section, we prove our main theorems on global continuation of periodic orbits for arbitrary $C^1$ families of vector fields.
    Before doing this, we require one additional lemma. 
    Lemma \ref{lem:linking-number-lower-semi-cont} enables us to prove Theorem \ref{th:main-thm} without the consideration of ``virtual periods'' as required in \cite[Thm~3.1, Lem.~3.2]{alligood1984families}. 
    \begin{Lem}\label{lem:linking-number-lower-semi-cont}
    	Let $f_n\in C^1(M,\T M)$ be a sequence of $C^1$ vector fields on a smooth manifold $M$ which converge in the weak $C^1$ topology to a $C^1$ vector field $f$ on $M$, and let $\cf$ be a $C^1$ closed 1-form on $M$.
    	For each $n$ let $\gamma_n$ be a periodic orbit of $f_n$ with image $\Gamma_n$ and (minimal) period $\tau_n$, and let $\gamma$ be a periodic orbit of $f$ with image $\Gamma$ and (minimal) period $\tau$.
    	Assume that the periods $\tau_n$ have a uniform upper bound, and assume that for each $n$ there exists $x_n\in \Gamma_n$ such that $x_n\to x_0\in \Gamma$.
    	Then
    	\begin{enumerate}
    		\item\label{item:lem:linking-number-lower-semi-cont1}    	$\liminf_{n\to\infty} \left|\int_{\gamma_n}\cf \right| \geq \left|\int_{\gamma}\cf \right|$ and $\liminf_{n\to\infty}\tau_n \geq \tau$;
         	\item\label{item:lem:linking-number-lower-semi-cont2} $\lim_{n\to\infty} \left|\int_{\gamma_n}\cf \right| = \left|\int_{\gamma}\cf \right|$ if and only if $\lim_{n\to\infty}\tau_n = \tau$.
    	\end{enumerate}
 
    \end{Lem}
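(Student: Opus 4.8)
The plan is to prove both parts at once by extracting convergent subsequences of the (bounded) sequence of periods $(\tau_n)$ and showing that every subsequential limit is a positive integer multiple of $\tau$. The one geometric input needed at the outset is that there are no short periodic orbits of $f_n$ near $x_0$: since $\gamma$ is nonstationary, $f(x_0)\neq 0$, so the flow box theorem gives a chart near $x_0$ in which $f$ has a strictly monotone first coordinate; because $f_n\to f$ uniformly on a fixed compact neighborhood of $\Gamma$ (weak $C^1$ convergence suffices), for large $n$ the field $f_n$ also has a strictly monotone first coordinate on a slightly smaller flow box $W$, so no periodic orbit of $f_n$ lies in $W$. Continuous dependence of flows on the vector field and on initial conditions (again supplied by weak $C^1$ convergence, via a uniform Lipschitz bound near $\Gamma$) gives $\Phi^t_{f_n}(x_n)\to\Phi^t_f(x_0)$ uniformly for $t$ in compact intervals, where $\Phi$ denotes the respective flows. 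If some subsequence had $\tau_{n_k}\to 0$, then $\Gamma_{n_k}$ would shrink to $\{x_0\}$ and hence eventually lie in $W$, a contradiction; thus every subsequential limit $\tau^*$ of $(\tau_n)$ is positive.

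Now fix a subsequence with $\tau_{n_k}\to\tau^*>0$. Evaluating the uniform limit at $t=\tau_{n_k}$ and using $\Phi^{\tau_{n_k}}_{f_{n_k}}(x_{n_k})=x_{n_k}\to x_0$ gives $\Phi^{\tau^*}_f(x_0)=x_0$, so $\tau^*$ is a period of $\gamma$, hence $\tau^*=m\tau$ for some integer $m\geq 1$. For the cohomological integral, write
\[
\int_{\gamma_{n_k}}\cf = \int_0^{\tau_{n_k}}\cf_{\gamma_{n_k}(t)}\big(f_{n_k}(\gamma_{n_k}(t))\big)\,dt ;
\]
the integrands converge uniformly on the fixed interval $[0,\sup_n\tau_n]$ (using the uniform period bound, uniform convergence $\gamma_{n_k}(t)\to\gamma(t)$, uniform convergence $f_{n_k}\to f$ near $\Gamma$, and continuity of $\cf$), and $\tau_{n_k}\to m\tau$, so the integral converges to $\int_0^{m\tau}\cf_{\gamma(t)}(f(\gamma(t)))\,dt=m\int_{\gamma}\cf$. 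Taking absolute values, $|\int_{\gamma_{n_k}}\cf|\to m|\int_{\gamma}\cf|$ while $\tau_{n_k}\to m\tau$, with $m\geq 1$.

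Part (1) then follows by choosing subsequences realizing $\liminf_n\tau_n$ and $\liminf_n|\int_{\gamma_n}\cf|$ and invoking $m\geq 1$. For part (2): if $\tau_n\to\tau$, then every subsequence has a further subsequence along which $m\tau=\tau$, i.e.\ $m=1$, so $|\int_{\gamma_n}\cf|$ has the unique subsequential limit $|\int_\gamma\cf|$ and therefore converges to it. Conversely, if $|\int_{\gamma_n}\cf|\to|\int_\gamma\cf|$, then along any convergent subsequence of $(\tau_n)$ we get $m|\int_\gamma\cf|=|\int_\gamma\cf|$; using $|\int_\gamma\cf|>0$ (which holds whenever the lemma is invoked, e.g.\ $\ell=|\int_\gamma\cf|>0$ in Theorem~\ref{th:main-thm}) this forces $m=1$, so the bounded sequence $(\tau_n)$ has $\tau$ as its only subsequential limit and hence converges to $\tau$.

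The step I expect to require the most care is the "no short orbits near $x_0$" argument together with the continuous-dependence estimates: one must consistently work on a fixed compact neighborhood of $\Gamma$ so that weak $C^1$ convergence of $f_n$ delivers both the transfer of flow-box coordinate monotonicity from $f$ to $f_n$ and the uniform Gronwall-type bounds; once these are in place, the remainder is routine subsequence bookkeeping.
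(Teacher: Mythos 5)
Your argument is correct in substance but takes a genuinely different route from the paper's. The paper works with a tubular neighborhood retraction $\pi\colon U\to\Gamma$: after arranging $\D_y\pi\, f_n(y)\neq 0$ on $U$ for large $n$, the restriction $\pi|_{\Gamma_n}$ is an orientation-preserving local diffeomorphism of some degree $d_n\geq 1$, so homotopy invariance gives the exact identity $\left|\int_{\gamma_n}\cf\right| = d_n\left|\int_{\gamma}\cf\right|$ for each large $n$, and the period statements are handled by a first-impact-time (return-time) map to the fiber $\pi^{-1}(x_0)$, with $d_n=1$ characterized as $\Gamma_n$ meeting that fiber exactly once. You instead extract convergent subsequences of the bounded periods, use continuous dependence of flows to show every subsequential limit is $m\tau$ with $m\in\N_{\geq 1}$ (your flow-box exclusion of short orbits handles $m=0$; one could equally quote a uniform positive lower bound on periods from the uniform Lipschitz bound, in the spirit of the Yorke reference the paper cites elsewhere, and the small bootstrapping needed to trap a short orbit inside the flow box should be spelled out), and then compute $\int_{\gamma_{n_k}}\cf\to m\int_\gamma\cf$ directly from uniform convergence of the integrands on a fixed compact time interval. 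The integer ("covering") structure that the paper obtains from a mapping degree at each finite $n$ you obtain only in the limit along subsequences, but that is all the lemma requires, and your argument dispenses with tubular neighborhoods and degree theory in favor of routine flow-continuity estimates.

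One point to flag: in the converse direction of (2) you invoke $\left|\int_\gamma\cf\right|>0$, which is not among the lemma's stated hypotheses. This is a real restriction: if $\int_\gamma\cf=0$, the implication "integrals converge $\Rightarrow$ periods converge" genuinely fails (take $f_n\to f$ through a period-doubling bifurcation, so that the doubled orbits of $f_n$ accumulate on $\Gamma$ with $\tau_n\to 2\tau$ while all the integrals vanish). However, the paper's own proof makes exactly the same tacit assumption: the step after \eqref{eq:lem:cf-deg-eqn}, asserting that $\lim_n\left|\int_{\gamma_n}\cf\right|=\left|\int_\gamma\cf\right|$ if and only if $d_n\to 1$, is only valid when $\left|\int_\gamma\cf\right|\neq 0$. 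Since every invocation of the lemma in the paper has $\left|\int_\gamma\cf\right|=\ell>0$, your explicit use of this hypothesis is harmless, and making it visible is, if anything, a point in your favor.
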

    
    \begin{proof}
    	We begin with some preparations.
    	Let $\pi\colon U\to \Gamma$ be a $C^1$ tubular neighborhood of $\Gamma$ with $U$ precompact, so in particular $\pi$ is a submersion and retraction.
    	Since $\pi|_\Gamma=\id_\Gamma$,
    	by shrinking $U$ we may assume by continuity that $\D_y\pi f(y)\neq 0$ for all $y\in U$.
    	Since $f_n \to f$ uniformly on $U$, there exists $N_0>0$ such that the same is true of $\D_y\pi f_n(y)$ for all $n > N_0$.
    	Since the periods of the $\gamma_n$ have a uniform upper bound and since $f_n\to f$, continuous dependence of a flow on its vector field implies that there exists $N_1>N_0$ such that $\Gamma_n\subset U$ for all $n>N_1$.
    	Hence $\pi|_{\Gamma_n}\colon \Gamma_n \to \Gamma$ is well-defined and an orientation-preserving local diffeomorphism for $n> N_1$, where $\Gamma$ and $\Gamma_n$ are given the orientations induced by $\gamma$ and $\gamma_n$.
    	
        Next, since $f$ is transverse to the manifold $S_0\coloneqq \pi^{-1}(x_0)$, the implicit function theorem implies that there is a well-defined $C^1$ ``first impact time map'' $t_f\colon S_1\to S_0$ from a neighborhood $S_1\subset S_0$ of $x_0$ to $S_0$, with $t_f(y)$ defined to be the smallest positive real number such that $\Phi_{f}^{t_f(y)}(y)\in S_0$, where $y\in S_1$ and $\Phi_f$ is the local flow of $f$.
        By the implicit function theorem, $t_f(y)$ is a fortiori jointly continuous in $y$ and $f$ in the $C^1$ topology.  	
        Let $N_2 > N_1$ be such that $\Gamma_n \cap S_0 \subset S_1$ for all $n > N_2$.    	
    	In the remainder of the proof, assume $n > N_2$.
    	
    	We now proceed with the proof of \ref{item:lem:linking-number-lower-semi-cont1}.
    	First, note that for any $y_n\in \Gamma_n\cap S_1$, the definition of the first impact time map implies
    	\begin{equation}\label{eq:lem-cf-per-eqn}
    	t_{f_n}(y_n) \leq \tau_n. 
    	\end{equation}
    	Since the impact time map is continuous and since $y_n\to x_0$, the left hand side converges to $\tau$.
    	This proves the statement about the periods in \ref{item:lem:linking-number-lower-semi-cont1}. 
    	Next, since $\pi|_{\Gamma_n}$ is an orientation-preserving local diffeomorphism, it follows that the degree $d_n$ of $\pi|_{\Gamma_n}$ satisfies $d_n \geq 1$.
    	Since $U$ deformation retracts onto $\Gamma$, the inclusion $\Gamma_n\hookrightarrow U$ is homotopic to the composition of $\pi|_{\Gamma_n}$ with the inclusion $\Gamma\hookrightarrow U$.
    	Hence we have
    	\begin{equation}\label{eq:lem:cf-deg-eqn}
    	\left|\int_{\gamma_n}\cf\right| = \left|\int_{\Gamma_n}(\pi|_{\Gamma_n})^*\cf \right|= d_n \left|\int_{\gamma}\cf\right| \geq \left|\int_{\gamma}\cf\right|.    	
    	\end{equation}
    	This completes the proof of \ref{item:lem:linking-number-lower-semi-cont1}.

        Next, note that \eqref{eq:lem:cf-deg-eqn} implies that $\lim_{n\to\infty} \left|\int_{\gamma_n}\cf \right| = \left|\int_{\gamma}\cf \right|$ if and only if $\lim_{n\to\infty}d_n = 1$.
    	Since $\pi|_{\Gamma_n}\colon \Gamma_n \to \Gamma$ is an orientation-preserving local diffeomorphism, this in turn holds if and only if $\Gamma_n$ intersects $S_1$ in a single point for all sufficiently large $n$.
    	And by the definition of the first impact time map, this latter statement holds if and only if $t_{f_n}(y_n)=\tau_n$ for all sufficiently large $n$, where $y_n\in \Gamma_n\cap S_0$. 
    	So to prove \ref{item:lem:linking-number-lower-semi-cont2}, it suffices to prove that this final statement holds if and only if $\lim_{n\to\infty}\tau_n = \tau$.
    	
    	Assume that $t_{f_n}(y_n)=\tau_n$ for all large $n$.
    	Since $t_{f_n}(y_n)\to \tau$, it follows that $\tau_n\to \tau$.
    	Conversely, assume that there exists a subsequence $n_k\to\infty$ arbitrarily large with $t_{f_{n_k}}(y_{n_k})<\tau_{n_k}$. 
        Then
    	$$t_{f_{n_k}}\left(\Phi_{f_{n_k}}^{t_{f_{n_k}}(y_{n_k})}(y_{n_k})\right) + t_{f_{n_k}}(y_{n_k})\leq \tau_{n_k}.$$
    	By continuity of the impact time map and of $\Phi$ with respect to all arguments and the fact that $y_n\to x_0$, the left hand side converges to $2t_f(x_0) = 2\tau$.
    	Hence $\limsup_{n\to\infty}\tau_n>\tau$.
    	This proves \ref{item:lem:linking-number-lower-semi-cont2}.
    \end{proof}

  \ThmContinuability*
  
  \begin{Rem}
  	Our proof is inspired by the proof of \cite[Thm~3.1, Lem.~3.2]{alligood1984families}, and we have tried to keep our proof similar to theirs in an effort to make the similarities and differences readily discernible, although we have added some details.
  	One key difference is that there is no mention of ``virtual periods'' anywhere in our proof; using Lemma \ref{lem:linking-number-lower-semi-cont}, their role is instead filled by integrals of the form $\left|\int_\alpha \iota_\mu^*\cf\right|$. 
  	This difference also explains why \cite[Lem~3.2]{alligood1984families} requires the assumption that $\gamma$ has no Floquet multipliers which are roots of unity, whereas we need only assume that $+1$ is not a multiplier. 
  	Another key difference in our proof is that our definition of the function $F$ in \eqref{eq:F-def} below differs from the definition of $F$ in the proof of \cite[Lem~3.2]{alligood1984families} in that we have added a second term imposing a ``cost'' for $\left|\int\iota_\mu^*\cf\right|$ to deviate from $\ell$.
  \end{Rem}

\begin{proof}
	The weak $C^1$ (or $C^1$ compact-open) topology on $C^1(Q\times \R, \T Q)$ is (completely) metrizable, and this induces a metric on the closed subspace of one-parameter families of vector fields \cite[p.~62]{hirsch1976differential}.
	Throughout the remainder of this proof, we denote this metric by $\dco(\slot,\slot)$.
	In the following, we identify the images of periodic orbits such as $\Gamma$ with sets of the form $\Gamma \times \{\mu_0\} \subset Q\times \R$ when convenient, and similarly conflate periodic orbits of $(f,0)$ with those of the appropriate $f_\mu$.
	We also note that all topological closures are as subsets of $\domcf$ in the following; when we say that a subset is precompact, we mean that its closure in $\domcf$ is compact.
	
	Assume that $\gamma$ is not $(\cf,\ell)$-globally continuable.
    Using the notation of Definition \ref{def:cf-ell-global-cont} (with $B\subset \domcf$ replacing $A$), it follows that $\widetilde{B}_{\leq \ell}\setminus \Gamma$ is disconnected and has a component $\widetilde{B}_{\leq \ell}^1$ which contains a component $\widetilde{B}_{ \ell}^1$ of $\widetilde{B}_{\ell}\setminus \Gamma$ such that $\widetilde{B}_{\ell}^1$ and $\widetilde{B}_{\leq \ell}^1$ satisfy none of the conditions of  Definition \ref{def:cf-ell-global-cont}.
	In particular, $\widetilde{B}_\ell^1 = \widetilde{B}_{\leq \ell}^1$, and $f_\mu(x)\neq 0$ for all $(x,\mu)\in \cl (\widetilde{B}_{\ell}^1)$.
	Since we assume that $+1$ is not a Floquet multiplier of $\gamma$, it follows from the implicit function theorem applied to a Poincar\'{e} map and Lemma \ref{lem:linking-number-lower-semi-cont} that there is a relatively open neighborhood $W\subset Q\times \{\mu_0\}$ of $\Gamma$ with $W\subset \dom(\cf)$ and such that $\gamma$ is the only periodic orbit in $W$ on which $\left|\int\iota_{\mu_0}^*\cf\right| \leq \ell$---except, perhaps, for orbits of very long period.
	
	Let
	\begin{equation*}
	p_0 \coloneqq \inf_{(x,\mu)\in \widetilde{B}_{\leq\ell}^1}\left\{\tau\colon \text{$\tau$ is the period of the orbit through $(x,\mu)$}\right\}
	\end{equation*}
	and
	\begin{equation*}
	p_1 \coloneqq \sup_{(x,\mu)\in \widetilde{B}_{\leq \ell}^1}\left\{\tau\colon \text{$\tau$ is the period of the orbit through $(x,\mu)$}\right\}.
	\end{equation*}
	Note that $p_0 > 0$ \cite{yorke1969periods} and $p_1 < \infty$ by our assumptions.
	If $t\mapsto \Phi_\mu^t(x)$ is the trajectory of $f$ through $(x,\mu)$, we define the function
	\begin{equation}\label{eq:F-def}
	F(x,\mu)\coloneqq \min_{\frac{1}{2}p_0\leq t \leq 2p_1} d(\Phi_\mu^t(x),x) + \left|\ell -\left| \int_{\Phi_\mu^{[0,t]}(x)}\iota_\mu^*\cf\right|\right|
	\end{equation}
	on $\domcf$, where $d(\slot,\slot)$ is the distance associated to some Riemannian metric on $Q$. 
	The zeros of $F$ are points on the images of periodic orbits $\alpha$ of $f$ such that $\ell$ is an integer multiple of $\left|\int_\alpha \iota_\mu^*\cf\right|$.
	Loosely speaking, $F$ measures how close the trajectory through $(x,\mu)$ is to being periodic and satisfying $\ell / \left|\int \iota_\mu^*\cf\right| \in  \N$, for periods between $\frac{1}{2}p_0$ and $2p_1$.
	Since $\Phi$ is continuous in $(x,\mu,t)$, $F$ is continuous in $(x,\mu)$.
	
	For $\epsilon > 0$ let $N_\epsilon\coloneqq \{(x,\mu)\in \domcf\colon F(x,\mu)\leq \epsilon\}$, and let $N^0_\epsilon$ be the component of $N_\epsilon$ containing $\widetilde{B}^1_{\leq \ell}=\widetilde{B}^1_{ \ell}$.
	Choose $\epsilon$ small enough so that
	\begin{enumerate}
		\item\label{item:ng-1} the component $M_\epsilon$ of $N^0_\epsilon \cap (Q\times \{\mu_0\})$ containing $\Gamma$ is a subset of $W$;
		\item\label{item:ng-2} $N^0_\epsilon\setminus W$ is disconnected, and we denote by $N_\epsilon^1$ the component containing $\widetilde{B}_\ell^1$;
		\item\label{item:ng-3} there are no zeros of $f$ in the closure $\cl(N_\epsilon^1)$ of $N_\epsilon^1$ in $\domcf$;
		\item\label{item:ng-4} The closure $\cl(N_\epsilon^1)$ of $N_\epsilon^1$ in $\domcf$  is compact; 
		\item\label{item:ng-m-eps-uniq-orbit-ell} there exists $\rho_1 > 0$ such that, when $\dco(f,g) < \rho_1$, the system $\dot{x} = g(x,\mu_0)$ will have exactly one periodic orbit $\gamma_g$ in $M_\epsilon$ having period $\leq 2 p_1$ and satisfying $\left|\int_{\gamma_g}\iota_{\mu_0}^*\cf \right|\leq \ell$, and this orbit satisfies $\left|\int_{\gamma_g}\iota_{\mu_0}^* \cf\right| = \ell$ and does not have $+1$ as a Floquet multiplier;

		\item\label{item:ng-min-per} there exists $\rho_2 > 0$ such that, when $\dco(f,g)<\rho_2$, the system $\dot{x}=g(x,\mu)$ will have no periodic orbits $\alpha$ contained in $N_\epsilon^1$ satisfying either (i) the period of $\alpha$ is $\leq 2p_1$ and $\left|\int_\alpha \iota_{\mu}^*\cf\right| < \ell$, or (ii) the period of $\alpha$ belongs to $J=(-\infty,\frac{3}{4}p_0]\cup [\frac{4}{3}p_1,\frac{5}{3}p_1]$ and $\left|\int_\alpha \iota_{\mu}^*\cf\right| = \ell$ .		

	\end{enumerate}
	
    By the definition of $F$ and the sentence preceding the definition of $p_0$, it follows that $\Gamma = F^{-1}(0) \cap W$.
    It follows that $F$ attains a minimum $m > 0$ on the compact boundary of a tubular neighborhood of $\Gamma$ in $W$.
	Taking $\epsilon < m$ ensures that \ref{item:ng-1} is satisfied.
	
	We now argue that conditions \ref{item:ng-2}, \ref{item:ng-3}, and \ref{item:ng-4} can be satisfied by taking $\epsilon$ sufficiently small.
	Let $U\subset\domcf$ be an arbitrary precompact open neighborhood of $\widetilde{B}_\ell^1 = \widetilde{B}_{\leq \ell}^1$ such that (i)  $\Gamma \cup \widetilde{B}^1_\ell$ is a connected component of $\cl(U) \cap F^{-1}(0)$ and (ii) $\Gamma \cap \cl((\partial U)\setminus W) = \varnothing$.\footnote{Here is an explicit construction of such a $U$. 
	Let $d(\slot,\slot)$ be the distance induced by any complete Riemannian metric on $Q\times \R$.
    Since $+1$ is not a multiplier of $\Gamma$, we may choose $\delta > 0$ small enough so that the $\delta$-neighborhood $V_\delta\coloneqq \{(x,\mu)\in Q\times \R\colon d((x,\mu),\Gamma) < \delta\}$ of $\Gamma$ contains at most one periodic orbit of $F^{-1}(0) \cap (Q\times \{\mu\})$ for each $\mu$ and satisfies $V_\delta \cap (Q\times \{\mu_0\}) \subset W$.
    It follows that of $V_\delta \setminus W$ is disconnected, and the triangle inequality further implies that the $\frac{\delta}{2}$-neighborhood $\{(x,\mu)\in (Q\times \R)\setminus \cl(W)\colon d((x,\mu),\widetilde{B}_\ell^1) < \frac{\delta}{2}\}$ of $\widetilde{B}_\ell^1$ consists of precisely two connected components, one of which contains $\widetilde{B}^1_\ell$ and is disjoint from $\widetilde{B}^2_{\leq \ell} \cap V_\delta$; define $U$ to be this component. 
    Property (i) follows since, e.g., $\widetilde{B}^1_\ell$ is a connected component of $F^{-1}(0)\cap U$ and (ii) follows since $d((x,\mu),\Gamma) = \frac{\delta}{2}$ for all $(x,\mu) \in \cl((\partial U)\setminus W)$.
    Note that $U$ is precompact since it is bounded and the metric inducing $d(\slot,\slot)$ is complete.}  
	We claim that, for all $\epsilon > 0$ sufficiently small, the component $\widetilde{N}^0_\epsilon$ of $(N_\epsilon^0 \cap \cl(U)) \setminus W$ containing $\widetilde{B}_\ell^1$ is contained in $\interior(U) = U$.
	If not, there is a sequence $(\epsilon_i)_{i\in \N}$ decreasing to zero with $\widetilde{N}^0_{\epsilon_i} \cap (\partial U)\setminus W  \neq \varnothing$ for all $i\in \N$.
	Since $\cl(\widetilde{N}^0_{\epsilon_1}) \supset \cl(\widetilde{N}^0_{\epsilon_2}) \supset \cdots$ is a decreasing sequence of compact sets having nonempty intersection with $(\partial U)\setminus W$, it follows that $\cl((\partial U)\setminus W) \cap \bigcap_{i\in \N} \cl(\widetilde{N}^0_{\epsilon_i})\neq \varnothing$.
	Since the intersection of any decreasing sequence of compact connected subsets of a Hausdorff space is always connected, it follows that $\bigcap_{i\in \N} \cl(\widetilde{N}^0_{\epsilon_i})$ is a connected subset of $\cl(U)\cap F^{-1}(0)$ containing both $\Gamma \cup \widetilde{B}_\ell^1$ and some point in $\cl((\partial U) \setminus W)$.
	But $(\Gamma \cup \widetilde{B}^1_\ell) \cap \cl((\partial U) \setminus W) = \varnothing$ by (ii) and the fact that $U$ is a neighborhood of $\widetilde{B}^1_\ell$, so $\Gamma\cup \widetilde{B}^1_\ell$ is a proper subset of the connected set $\bigcap_{i\in \N} \cl(\widetilde{N}^0_{\epsilon_i})\subset (\cl(U)\cap F^{-1}(0))$; this contradicts (i), so the claim that $\widetilde{N}_\epsilon^0 \subset 
	\interior(U)$ for all sufficiently small $\epsilon$ is proved.
	From this claim it follows that $N^0_\epsilon\setminus W$ is disconnected for sufficiently small $\epsilon$ (with the component $N^1_\epsilon$ of $N^0_\epsilon
	\setminus W$ containing $\widetilde{B}^1_\epsilon$ being equal to $\widetilde{N}_\epsilon^0$), proving that we may choose $\epsilon$ small enough so that \ref{item:ng-2} is satisfied; \ref{item:ng-4} is automatically satisfied since $U$ is precompact and $\cl(N^1_\epsilon)\subset \cl(U)$.
	Since $f$ is bounded away from zero on the compact set $\cl(\widetilde{B}^1_\epsilon)\subset \cl(U)$, and since the precompact neighborhood $U$ satisfying (i-ii) was arbitrary, we may ensure that \ref{item:ng-3} is satisfied by shrinking $U$ so that $f$ is nonzero on $\cl(U)$ and choosing $\epsilon$ sufficiently small so that $\widetilde{N}^0_\epsilon\subset \interior(U)$ as above. 
	Since $U$ satisfying (i-ii) was arbitrary, the above discussion also implies the following fact which we will use: 
	\begin{equation}\label{eq:N1eps-intersect}
	\bigcap_{\epsilon\in(0,\epsilon_0)}N^1_\epsilon = \widetilde{B}^1_\ell,
	\end{equation}
	where $\epsilon_0$ is sufficiently small so that $N^0_\epsilon\setminus W$ is disconnected and $N^1_\epsilon$ is well-defined.
	
	Let $\tau_0$ be the period of $\beta$.
	To show that condition \ref{item:ng-m-eps-uniq-orbit-ell} can be satisfied by taking $\epsilon$ sufficiently small, we argue as follows.
	First, note that the implicit function theorem applied to a Poincar\'{e} map and Lemma \ref{lem:linking-number-lower-semi-cont} imply that there are $\epsilon_1,\rho_{1}$ such that $\dot{x}=g(x,\mu)$ will have only one orbit $\gamma_g$ in $M_{\epsilon_1}$ satisfying $\left|\int_{\gamma_g}\iota_{\mu_0}^*\cf\right| = \ell$ and having period $\leq 2p_1$ when $\dco(f,g)<\rho_{1}$, and by choosing $\epsilon_1$ small enough we can ensure that $\gamma_g$ has no Floquet multiplier equal to $+1$.
	Suppose that there exist sequences $(\epsilon_i)_{i\in\N}$ and $(\rho_{i})_{i\in \N}$ decreasing to zero and $(g_i)_{i\in\N}$ satisfying $\dco(f,g_i)<\rho_i$ and with each $g_i$ having a periodic orbit $\gamma_i$ in $M_{\epsilon_i}$ having period $\leq 2p_1$ and satisfying $\left|\int_{\gamma_i}\iota_{\mu_0}^*\cf\right| < \ell$.
	The images of $\gamma_i$ converge uniformly to the image of $\gamma$ since $\epsilon_i\to 0$ and since the periods of the $\gamma_i$ are bounded, so Lemma \ref{lem:linking-number-lower-semi-cont} implies that $\left|\int_{\gamma}\iota_{\mu_0}^*\cf\right| < \ell$, a contradiction.
	Hence we may ensure the satisfaction of condition \ref{item:ng-m-eps-uniq-orbit-ell} by choosing $\epsilon$ sufficiently small.
		
	To show that condition \ref{item:ng-min-per} may be satisfied, we argue as follows.
	Suppose that, for sequences $(\epsilon_i)_{i\in\N}$ and $(\rho_{i})_{i\in\N}$ decreasing to zero, there exist a sequence of functions $(g_i)_{i\in\N}$ with $\dco(f,g_i)<\rho_{i}$ and a corresponding sequence of orbits $(\alpha_i)_{i\in\N}$ such that $\alpha_i$ is a periodic orbit of $\dot{x} = g_i(x,\mu)$ satisfying at least one of (i, ii) of condition \ref{item:ng-min-per}.
	Choose a point $(x_i,\mu_i)\in N_{\epsilon_i}^1$ on the image of $\alpha_i$ for each $i$.
	Since $N_{\epsilon_i}^1$ is precompact for $i$ large, the $(x_i,\mu_i)$ will have an accumulation point, and \eqref{eq:N1eps-intersect} implies that this accumulation point belongs to the image of a periodic orbit $\alpha$ of $f$ contained in $\widetilde{B}_{\ell}^1 \cup \Gamma$.
    The property (ii) cannot be satisfied, since if the $\alpha_i$ satisfy $\left|\int_{\alpha_i}\iota_{\mu_i}^*\cf\right|=\ell$ for all $i$, then Lemma \ref{lem:linking-number-lower-semi-cont} implies that the periods $\tau_i$ of the $\alpha_i$ satisfy $\tau_i\to\tau \in [p_0,p_1]$, so $\tau_i\not\in J$ for all sufficiently large $i$. 
    And if the property (i) is satisfied, so that 
	$\tau_i\leq 2p_1$ and $\left|\int_{\alpha_i} \iota_{\mu_i}^*\cf \right| < \ell$ for all $i$, then Lemma \ref{lem:linking-number-lower-semi-cont} implies that $\alpha$ satisfies $\left|\int_{\alpha} \iota_{\mu}^*\cf\right| < \ell$, contradicting $\left|\int_\alpha \iota_{\mu}^*\cf\right| = \ell$.
	Hence we may ensure the satisfaction of condition \ref{item:ng-min-per} by choosing $\epsilon$ sufficiently small. 

    Following the choice of $\epsilon$, we let $g_\epsilon\in \K$ be a generic family sufficiently close to $f$ in the weak $C^1$ topology so that
    \begin{enumerate}
    	\item\label{item:ng-g-cond-f-g-eps-close} $\dco(f,g_\epsilon)<\min\{\rho_1,\rho_2\}$;
    	\item\label{item:min-g-not-too-small} $\min_{(x,\mu)\in\cl(N_\epsilon^1)}\norm{g(x,\mu)} > \frac{1}{2} \min_{(x,\mu)\in\cl(N_\epsilon^1)}\norm{f(x,\mu)}$; and
    	\item\label{item:ng-g-cond-2} $\max_{(x,\mu)\in \cl(N_\epsilon^1)}|F-G_\epsilon|<\frac{\epsilon}{2}$, where $G_\epsilon$ is defined analogously to $F$ for solutions of $\dot{x} = g_\epsilon(x,\mu)$ (again using $\ell$ and $\frac{1}{2}p_0\leq t \leq 2p_1$).
    \end{enumerate}
    Condition \ref{item:ng-g-cond-f-g-eps-close} can be satisfied since the set of $C^5$ vector field families is dense in the space of $C^1$ vector field families \cite[Ch.~2.2,~Ex.~3]{hirsch1976differential}, and $\K$ is dense in the space of $C^5$ families \cite[Thm~A]{sotomayor1973generic} as discussed in \S \ref{sec:background-generic-gamilies}.
    Similar reasoning implies that condition \ref{item:min-g-not-too-small} can be satisfied, using also the fact that $f$ has no zeros in the compact set $\cl(N_\epsilon^1)$.
    To show that $g_\epsilon$ can be chosen to satisfy condition \ref{item:ng-g-cond-2}, we argue as follows. 
    Let $\Psi^t_\mu(x)$ be the solution of 
    \begin{equation}\label{eq:ode-g}
    \dot{x}=g_\epsilon(x,\mu)
    \end{equation}
    through $(x,\mu)$.
    Since $\cl(N_\epsilon^1)$ is compact and since flows depend continuously on their vector fields, $g_\epsilon$ can be chosen so that
    $$\left|\left(d(\Phi_\mu^t(x),x) + \left|\ell -\left| \int_{\Phi_\mu^{[0,t]}(x)}\iota_{\mu}^*\cf\right|\right|\right) - \left(d(\Psi_\mu^t(x),x) + \left|\ell -\left| \int_{\Psi_\mu^{[0,t]}(x)}\iota_{\mu}^*\cf\right|\right|\right)\right| < \frac{\epsilon}{2}$$
    for all $(x,\mu)\in \cl (N_\epsilon^1)$ and $\frac{1}{2}p_0\leq t\leq 2p_1$.
    Since in general two real-valued functions $P, Q$ uniformly satisfying $|P-Q|<\frac{\epsilon}{2}$ must also satisfy $|\min P - \min Q| < \frac{\epsilon}{2}$, if follows that $|F-G_\epsilon|<\frac{\epsilon}{2}$ uniformly on $\cl(N_\epsilon^1)$.
    
    Let $\gamma_\epsilon$ be the unique solution of \eqref{eq:ode-g}  in $M_\epsilon$ having period $\leq 2p_1$ and satisfying $\left|\int_{\gamma_\epsilon}\cf\right| = \ell$, let $\Gamma_\epsilon$ be the image of $\gamma_\epsilon$, and define the sets $\widetilde{A}_{\epsilon,\ell}, \widetilde{A}_{\epsilon,\leq \ell}$ as in Definition \ref{def:cf-ell-global-cont} (with $A_\epsilon$ replacing $A$).
    Define $Y\coloneqq (\widetilde{A}_{\epsilon, \ell}\setminus \Gamma_\epsilon)\cap \cl(N_\epsilon^1)$ and $Z\coloneqq (\widetilde{A}_{\epsilon, \leq\ell}\setminus \Gamma_\epsilon)\cap \cl(N_\epsilon^1)$.
    Because $+1$ is not a Floquet multiplier of $\gamma_\epsilon$, $Y$ and $Z$ are not empty.
    We want to show that $Y$ is contained in $\interior(N_\epsilon^1)$ and that $Y = Z$ .
    We begin with the first statement.
    Now $Y$ can only escape from $\interior(N_\epsilon^1)$ through $\partial N_\epsilon^1\subset \domcf$, i.e., (i) through $X\coloneqq \partial N_\epsilon^0 \cap N_\epsilon^1$ or (ii) through $M_\epsilon$.
    We discuss each case separately.
    
    Suppose $(x,\mu)\in Y \cap X$.
    Then by condition \ref{item:ng-g-cond-2} on $g_\epsilon$, $(x,\mu)$ must be on an orbit with period $\tau$, where $\tau\in (-\infty,\frac{1}{2}p_0)\cup(2p_1,\infty)$.
    By taking $\epsilon$ smaller, we may assume that $\gamma_\epsilon$ is sufficiently near $\gamma$ that the period of $\gamma_\epsilon$ belongs to $(\frac{3}{4}p_0,\frac{4}{3}p_1)$.
    Suppose $\tau > 2 p_1$.
    Then $Y$ contains orbits with periods less than $\frac{4}{3}p_1$ and orbits with periods greater than $2p_1$.
    Since periods on a branch of $(\int\iota_\mu^*\cf)$-constant orbits of a generic family change continuously, there must be an orbit $\alpha$ with image in $Y$ and with period in $[\frac{4}{3}p_1,\frac{5}{3}p_1]$, and no orbit on the ``path'' from $\gamma_\epsilon$ to $\alpha$ with period greater than $\frac{5}{3}p_1$.
    But then it is easily seen that the image of $\alpha$ is contained in $N_\epsilon^1$, contradicting condition \ref{item:ng-min-per} on the choice of $\epsilon$.
    A similar argument shows that $\tau < \frac{1}{2}p_0$ would also contradict \ref{item:ng-min-per}.
    Thus $Y$ and $X$ are disjoint. 
    By condition \ref{item:ng-m-eps-uniq-orbit-ell} on the choice of $\epsilon$, $\gamma_\epsilon$ is the only periodic orbit of $g_\epsilon$ with image in $M_\epsilon$ having period less than or equal to $2 p_1$ and satisfying $\left|\int_{\gamma_\epsilon}\iota_{\mu_0}^*\cf \right|= \ell$.
    By condition \ref{item:ng-min-per} on the choice of $\epsilon$, $Y$ contains no orbits with periods greater than $\frac{4}{3}p_1$.
    Thus $Y$ and $M_\epsilon$ are also disjoint. 
    It follows that $\widetilde{A}_{\epsilon, \ell}\setminus \Gamma_\epsilon$ (and hence also $\widetilde{A}_{\epsilon, \leq\ell}\setminus \Gamma_\epsilon$) is disconnected, with two components $\widetilde{A}_{\epsilon, \ell}^2$ and $Y = \widetilde{A}_{\epsilon, \ell}^1$.
    
    The fact that $Y\subset \interior(N_\epsilon^1)$ contains no periods larger than $\frac{4}{3}p_1$ also implies that $Y = Z$, for the orbit $\alpha$ through any limit point of $Y$ in $Z\setminus Y$ would be contained in $\cl(N_\epsilon^1)=N^1_\epsilon\cup M_\epsilon$, have period less than or equal to $\frac{2}{3}p_1$, and satisfy $\left|\int_\alpha\iota_{\mu}^*\cf\right| < \ell$, contradicting either condition \ref{item:ng-m-eps-uniq-orbit-ell} or condition \ref{item:ng-min-per} on the choice of $\epsilon$.
    It follows that the connected component $\widetilde{A}_{\epsilon, \leq \ell}^1$ of $\widetilde{A}_{\epsilon, \leq\ell}\setminus \Gamma_\epsilon$ satisfies $ \widetilde{A}_{\epsilon, \leq \ell}^1 = \widetilde{A}_{\epsilon, \ell}^1 = Y = Z$.
    
	To summarize, we have shown that $\widetilde{A}_{\epsilon,\leq \ell} \setminus \Gamma_\epsilon = \widetilde{A}_{\epsilon,\leq\ell}^1 \cup \widetilde{A}_{\epsilon,\leq\ell}^2$ is disconnected, that $\widetilde{A}_{\epsilon,\leq\ell}^1 = \widetilde{A}_{\epsilon,\ell}^1$, and that  $\widetilde{A}_{\epsilon,\ell}^1$ is contained in the compact set $\cl(N_\epsilon^1)\subset \domcf$ which contains no generalized centers.
	Additionally, the periods $\widetilde{A}_{\epsilon,\ell}^1$ have the uniform upper bound $\frac{4}{3}p_1$.
	But this implies that $\gamma_\epsilon$ is not $(\cf,\ell)$-globally continuable, contradicting Lemma \ref{lem:cf-ell-cont-generic-fam}.
	This contradiction implies that $\gamma$ must in fact be $(\cf,\ell)$-globally continuable and completes the proof.
\end{proof}

    Armed with Theorem \ref{th:main-thm}, we now proceed to prove our main results on existence of periodic orbits.
    We will use the following lemma to convert data from a closed 1-form and a vector field into a priori bounds on the periods of periodic orbits.

	\begin{Lem}\label{lem:cf-period-bound}
		Let $M$ be a smooth manifold and let $\cf$ be a $C^1$ 1-form on $M$.
		Let $\gamma$ be a periodic orbit of a $C^1$ vector field $f \colon M\to \T M$.
		Assume that there exists $\epsilon > 0$ such that $\cf(\dot{\gamma}(t))\geq \epsilon$ for all $t$.
		Then $\ell\coloneqq \int_{\gamma}\cf > 0$, and the period $\tau$ of $\gamma$ satisfies $$\tau \leq \frac{\ell}{\epsilon}.$$
	\end{Lem}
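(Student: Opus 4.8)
The plan is to unwind the definition of the line integral $\int_\gamma \cf$ using the canonical time parametrization of the periodic orbit, and then apply the pointwise lower bound on the integrand. First I would note that since $\cf(\dot\gamma(t)) \geq \epsilon > 0$ for all $t$, the orbit $\gamma$ cannot be stationary, so its (minimal) period $\tau$ is a positive real number (see also \cite{yorke1969periods}); regard $\gamma$ as a $C^1$ loop $\gamma\colon [0,\tau]\to M$ with $\dot\gamma(t)=f(\gamma(t))$ and $\gamma(0)=\gamma(\tau)$. By the definition of the integral of a $1$-form over a curve (which is independent of the choice of orientation-preserving parametrization, so this choice is legitimate),
\[
\int_\gamma \cf = \int_0^\tau \cf_{\gamma(t)}\bigl(\dot\gamma(t)\bigr)\,dt.
\]

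Next I would apply monotonicity of the Riemann integral together with the hypothesis $\cf(\dot\gamma(t))\geq \epsilon$ for all $t\in[0,\tau]$, which yields
\[
\ell = \int_\gamma \cf = \int_0^\tau \cf_{\gamma(t)}\bigl(\dot\gamma(t)\bigr)\,dt \;\geq\; \int_0^\tau \epsilon\,dt \;=\; \epsilon\tau.
\]
Since $\epsilon>0$ and $\tau>0$, this gives $\ell \geq \epsilon\tau > 0$, establishing the first claim; dividing the inequality $\ell\geq\epsilon\tau$ by $\epsilon$ gives $\tau\leq \ell/\epsilon$, establishing the second.

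There is essentially no obstacle here, as the argument is just the definition of the line integral plus monotonicity of integration; the only points meriting a word of justification are that the value of $\int_\gamma\cf$ does not depend on the parametrization (so the time parametrization may be used) and that $\tau>0$, which is forced by the hypothesis since a stationary orbit would have $\cf(\dot\gamma)=\cf(0)=0<\epsilon$. Note in particular that closedness of $\cf$ is not needed for this lemma.
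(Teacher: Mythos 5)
Your argument is correct and is essentially identical to the paper's proof: both parametrize $\gamma$ by time over $[0,\tau]$, write $\ell = \int_0^\tau \cf(\dot\gamma(t))\,dt \geq \epsilon\tau > 0$, and divide by $\epsilon$. The extra remarks on parametrization independence, positivity of $\tau$, and the fact that closedness of $\cf$ is not used are harmless additions that do not change the approach.
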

	\begin{proof}
		We have
		\begin{equation}
		\ell = \int_\gamma \cf  =\int_{0}^\tau\cf(\dot{\gamma}(t))\, dt \geq \epsilon \tau > 0,
		\end{equation}
		with the first inequality following since $\cf(\dot{\gamma}) \geq \epsilon$.
		This completes the proof. 
	\end{proof}
	
	We now prove Theorem \ref{th:existence}, our first periodic orbit existence result.
	Theorem \ref{th:existence} is fairly general, and it follows straightforwardly from Theorem \ref{th:main-thm} and Lemma \ref{lem:cf-period-bound}.
	We continue to identify $\Gamma$ with $\Gamma\times \{\mu_0\}$ when there is no risk of confusion in the following.
	  
	Given a subset $X\subset Q\times \R$ and any interval $J\subset \R$, we use the notation $X_J\coloneqq X\cap (Q\times J)$  in Theorems \ref{th:existence} and \ref{th:hopf} below.   
	
	\ThmExistence*
	\begin{proof}
		We prove the result in the case that $\mu^* < \mu_0$, with the proof for the case $\mu^* > \mu_0$ being similar.
		
		Assume, to obtain a contradiction, that there exists $\mu_1 > \mu_0$ such that $\widetilde{A}_\ell^1\cap (Q\times \{\mu_1\})=\varnothing$.
		Then connectedness of $\widetilde{A}_\ell^1$ and hypotheses \ref{item:th-ex-1} and \ref{item:th-ex-2} imply that $\widetilde{A}_\ell^1$ is contained in the set $\cc_{[\mu^*,\mu_1]}$.
		Since $\cc_{[\mu^*,\mu_1]}$ is compact by hypothesis \ref{item:th-ex-4}, hypothesis \ref{item:th-ex-3} implies that there is $\epsilon > 0$ such that $\iota_\mu^*\cf(f_\mu(x)) \geq \epsilon$ for all $(x,\mu) \in \cc_{[\mu^*,\mu_1]}$.
		Hence Lemma \ref{lem:cf-period-bound} implies that the periods of orbits in $\widetilde{A}_{\ell}^1$ are uniformly bounded above by $\frac{\ell}{\epsilon}$. 
		By assumption we also have $\widetilde{A}_{\leq \ell}^1 = \widetilde{A}_{\ell}^1$, and $\cl(\widetilde{A}_{\ell})$ contains no equilibria and hence no generalized centers by hypotheses \ref{item:th-ex-2} and \ref{item:th-ex-3}.
		But Theorem \ref{th:main-thm} implies that $\gamma$ is $(\cf,\ell)$-globally continuable, so we have a contradiction.
		This completes the proof.   
	\end{proof}
	We now use Theorem \ref{th:existence} to formalize a rather specific argument involving Theorem \ref{th:existence} and a Hopf bifurcation, which we have used to prove the existence of periodic orbits in applications (see \S \ref{sec:examples}).
	While the statement appears rather complicated, the upshot is that we do not have to repeat this argument in each of our individual examples.
	
	Given a subset $X\subset Q\times \R$ and any interval $J\subset \R$, we again use the notation $X_J\coloneqq X\cap (Q\times J)$  in Theorem \ref{th:hopf} below.  
		
	\ThHopf*
	\begin{proof}
		We prove the theorem for the case that the Hopf bifurcation is supercritical and $\mu^*\leq \mu_c$, with the other three cases being similar.
		
		The proof of the Hopf bifurcation theorem \cite[Thm~3.4.2]{guckenheimer1983nonlinear} implies that there exists $\delta > 0$ and a $\mu$-dependent two-dimensional center manifold $W^c_\mu$ for $\mu\in (\mu_c-\delta,\mu_c+\delta)$ such that (i) $(x_c,\mu_c) \in \iota_{\mu_c}(W^c_{\mu_c})\cap M$, (ii) the orbit at each $\mu$ on the bifurcating branch of periodic orbits is contained in $W^c_\mu$, and (iii) the image of the periodic orbit at $\mu$ on the bifurcating branch tends to $x_c$ uniformly as $\mu\to \mu_c$. 
		Since $\T_{x_c}W^c_{\mu_c} = E^c$, after shrinking $W^c_{\mu_c}$ if necessary it follows from hypothesis \ref{item:th-hopf-7} that $\iota_{\mu_c}(W^c_{\mu_c})$ intersects $M$ transversely.
		Hence (by an implicit function theorem argument) there exist local coordinates $(y,z,\mu)$ on a neighborhood of $(x_c,\mu_c)\subset Q\times \R$ in which $W^c_\mu \times \{\mu\} = \{(y,0,\mu)\}$ and  $M= \{(0,z,\mu)\}$. 
		This fact, (ii--iii) above, and hypothesis \ref{item:th-hopf-6} imply that, for $\mu>\mu_c$ sufficiently close to $\mu_c$, the disk $D_\mu \subset W^c_\mu$ bounded by the image of the bifurcating periodic orbit $\gamma_\mu$ intersects $\iota_{\mu}^{-1}(M)$ exactly once (and this intersection is transverse by hypothesis \ref{item:th-hopf-7}).
		Fixing such a $\mu_0>\mu_c$ sufficiently close to $\mu_c$ and defining $\gamma\coloneqq \gamma_{\mu_0}$, it follows that the intersection number of $\iota_{\mu_0}(\interior (D_{\mu_0}))$ with $M$ is $\pm 1$.
		Since $\cf$ is Poincar\'{e} dual to $N\setminus M$ in $(Q\times \R)\setminus M$, it can be shown using the technique of \cite[pp.~231--234]{bott1982differential} that $\int_{\gamma}\iota_{\mu_0}^*\cf = \int_{\iota_{\mu_0}\circ \gamma} \cf$ is equal to this intersection number up to a sign:\footnote{In \cite{bott1982differential} the authors work with $C^\infty$ forms, whereas we assume $\cf \in C^1$, but there is no issue since every $C^1$ closed form is cohomologous to a $C^\infty$ closed form \cite[pp.~61--70]{deRham1984differentiable}.}
		\begin{equation}\label{eq:th-hopf-int-cf-1}
		\left|\int_{\gamma}\iota_{\mu_0}^*\cf\right| = 1.
		\end{equation}
		Note that, since $(x_c,\mu_c)\in \interior (\cc)$ by hypothesis \ref{item:th-hopf-5}, we may assume that $\mu_0$ is chosen sufficiently close to $\mu_c$ that $\Gamma\subset \interior(\cc)$.
		By the proof of the Hopf bifurcation theorem we may furthermore assume that $\mu_0$ is chosen sufficiently close to $\mu_c$ that $\gamma$ is hyperbolic, so in particular $+1$ is not a Floquet multiplier of $\gamma$.
		
		Let $\domcf\coloneqq (Q\times \R)\setminus M$, and let $A, \widetilde{A}_1, \widetilde{A}_{\leq 1}$ be the components containing $\Gamma$ as in Definition \ref{def:cf-ell-global-cont} with $\ell = 1$ (identifying $\Gamma$ with $\Gamma\times \{\mu_0\}$).
		Note that the periodic orbit component $A$ of $f|_{\domcf}$ is also a periodic orbit component of $f$ due to hypotheses \ref{item:th-hopf-1},  \ref{item:th-hopf-2}, and  \ref{item:th-hopf-6}.
		The proof of the Hopf bifurcation theorem implies the existence of a compact neighborhood $K_0\subset Q\times \R$ of $(x_c,\mu_c)$ containing a connected subset $B\subset \widetilde{A}_1\cap K_0$ of nonstationary periodic orbits of $f$ such that (i) $\Gamma\subset B$, (ii) all periods of $B$ are close to the period of $\Gamma$, (iii) any other periodic orbits in $K_0\setminus B$ have very large period, and (iv) $B\setminus \Gamma$ consists of two connected components $B^1, B^2$ with $B^2\subset \interior(K_0)$ and the closure of $B^2$ containing $(x_c,\mu_c)$.
		Due to (iii) above, hypothesis \ref{item:th-hopf-3}, and Lemma \ref{lem:cf-period-bound}, it follows that $B = \widetilde{A}_1\cap K_0 = \widetilde{A}_{\leq 1}\cap K_0$.
		Hence 
		both $\widetilde{A}_{ 1}\setminus \Gamma$ and $\widetilde{A}_{\leq 1}\setminus \Gamma$ are disconnected with two connected components.
		As in Definition \ref{def:cf-ell-global-cont}, let $\widetilde{A}_1^1, \widetilde{A}_1^2$ and $\widetilde{A}_{\leq 1}^1, \widetilde{A}_{\leq 1}^2$ denote the connected components of $\widetilde{A}_{1}\setminus \Gamma$ and $\widetilde{A}_{\leq 1}\setminus \Gamma$, labeled so that $B^i\subset \widetilde{A}_1^i \subset \widetilde{A}_{\leq 1}^i$.
		Since $\cf$ is the Poincar\'{e} dual of a submanifold, its integral around any periodic orbit is an integer (which is nonzero by Lemma \ref{lem:cf-period-bound}), so we automatically have $$1 = \min\left\{\left|\int_{\alpha_\mu} \iota_\mu^*\cf\right|\colon \text{$\alpha_\mu$ is a periodic orbit with image in $A$}\right\}.$$ 
		In particular, it follows that $\widetilde{A}_{1}^i = \widetilde{A}_{\leq 1}^i$ for both $i=1,2$.
		
        By the above paragraph, there exists a neighborhood $U_0\subset K_0$ of $(x_c,\mu_c)$ such that $\widetilde{A}_1^1 \cap U_0 = \varnothing$.
		Hypothesis \ref{item:th-hopf-2} implies that $\widetilde{A}_1^1\subset \interior(\cc)_{(\mu^*,\infty)}$, so we have $\widetilde{A}_1^1\subset (\interior(\cc)\setminus U_0)_{(\mu^*,\infty)}$.
        Since $\domcf = (Q\times \R)\setminus M$, we have $\widetilde{A}_1^1 \cap M = \varnothing$ by definition.
        We now show that there is furthermore a neighborhood $U_1\subset Q\times \R$ of $M$ such that $\widetilde{A}_1^1 \cap U_1 = \varnothing$.
        If this were not true, then there is a sequence $(x_i,\mu_i)_{i\in \N}$ in $\widetilde{A}_1^1$ with\footnote{If not, then (since $Q\times \R$ is first countable) each $(x,\mu)\in M$ has an open neighborhood $U_{x,\mu}$ satisfying  $U_{x,\mu}\cap \widetilde{A}_{1}^1 = \varnothing$. 
       	But then $\bigcup_{(x,\mu)\in M}U_{x,\mu}$ is an open neighborhood of $M$ having empty intersection with $\widetilde{A}_{1}^1$, a contradiction.} $(x_i,\mu_i)\to (x,\mu)\in M\setminus U_0$.
        This sequence must be contained in $(\cc\setminus M)_{[\mu^*,\mu_1]}$ for some $\mu_1$, so hypothesis \ref{item:th-hopf-3} and Lemma \ref{lem:cf-period-bound} imply that the periods of the orbits through $(x_i,\mu_i)$ are uniformly bounded above by $\frac{1}{\epsilon}$ for some $\epsilon > 0$.
        This implies that $(x,\mu)$ is either a generalized center or lies on a nonstationary periodic orbit; hypothesis \ref{item:th-hopf-6} rules out the latter option, so $(x,\mu)$ is a generalized center. 
        But hypothesis \ref{item:th-hopf-2} further implies that $(x,\mu)\in \cc_{[\mu^*,\infty)}$, and this contradicts hypothesis \ref{item:th-hopf-5}.
        Hence there exists a neighborhood $U_1\subset Q\times \R$ of $M$ with $\widetilde{A}_1^1 \cap U_1 = \varnothing$, as desired.
		
		Define the set $\widetilde{\cc}\coloneqq \cc \setminus U_1$.
		By the last paragraph, $\widetilde{A}_1^1\subset \widetilde{\cc}$.
		Additionally, $\widetilde{A}_1^1\cap (Q\times \{\mu^*\}) = \varnothing$ by hypothesis \ref{item:th-hopf-1}, $\iota_\mu^*\cf(f_\mu(x))>0$ for all $(x,\mu)\in\widetilde{\cc}$ by hypothesis \ref{item:th-hopf-3}, and $\widetilde{\cc}$ contains no generalized centers for $f$ by \ref{item:th-hopf-5}.
		Finally, for every $\mu \geq \mu^*$, $$\widetilde{\cc}_{ [\mu^*,\mu]} = \cc_{[\mu^*,\mu]} \setminus U_1$$ is compact by hypothesis \ref{item:th-hopf-4}.
		Since we have already shown that $\widetilde{A}_1^1 = \widetilde{A}_{\leq 1}^1$ and that $+1$ is not a Floquet multiplier of $\gamma$, it follows that the hypotheses of Theorem \ref{th:existence} are satisfied with $\ell = 1$ and $\widetilde{\cc}$ playing the role of $\cc$.
        Hence $\widetilde{A}_{1}^1 \cap (Q\times \{\mu\}) \neq \varnothing$ for all $\mu > \mu_0$.
        In particular, $f_\mu$ has a nonstationary periodic orbit contained in $(\cc\setminus M)_{\{\mu\}}$ for all $\mu \geq \mu_0$.   
        Since $\mu_0 > \mu_c$ was arbitrary, it follows that $f_\mu$ has a nonstationary periodic orbit contained in $(\cc\setminus M)_{\{\mu\}}$ for all $\mu > \mu_c$ as well.
        This completes the proof.		
	\end{proof}

   \section{Examples}\label{sec:examples}
   In this section, we illustrate our results by proving periodic orbit existence results for the repressilator \eqref{eq:repressilator-intro} in \S \ref{sec:repressilator} and for the Sprott system \eqref{eq:sprott-intro} in \S \ref{sec:sprott}. 
   We begin with some preliminary discussion relevant for both systems.
   Both systems admit the symmetry $(x,y,z)\mapsto (y,z,x)$, and we discuss some consequences of this fact in \S \ref{sec:symmetry}.
   In \S \ref{sec:dtheta} we define a 1-form $d\theta$---to be used in the proofs for both systems---and briefly discuss some of its properties.

   \subsection{Basic symmetry considerations}\label{sec:symmetry}
   	In the remainder of \S \ref{sec:examples}, we use the notation $\1\coloneqq (1,1,1)$ and $\textbf{x}\coloneqq (x,y,z)$.   
   
	Define the linear permutation symmetry $\sigma\colon \R^3 \to \R^3$ via $\sigma(x,y,z)\coloneqq (y,z,x)$. 
	Letting $f_\mu\colon \R^3 \to \R^3$ denote either the repressilator \eqref{eq:repressilator-intro} or Sprott \eqref{eq:sprott-intro} vector fields, we see that $\sigma_* f_\mu \coloneqq \D \sigma \circ f_\mu \circ \sigma^{-1} = f_\mu.$
	It follows that $\sigma$ commutes with the (local) flow $\Phi_\mu$ of $f_\mu$ and therefore maps solution curves to solution curves.
	Since the diagonal $\Delta \coloneqq \textnormal{span}\{\1\}$ is the fixed point set of $\sigma$, it follows that $\Delta$ is $\Phi_\mu$-invariant since, for all $p \in \Delta$, $\sigma \circ \Phi^t_\mu(p) = \Phi^t_\mu\circ \sigma (p) = \Phi^t_\mu(p)$. 	
	Since $\sigma^3 = \id_{\R^3}$, the dynamics have a $\Z_3$ symmetry group whose action on $\R^3$ is generated by $\sigma$.
	It follows that any invariant set
	is either fixed by $\sigma$ or is one member of a family of three distinct invariant sets permuted by $\sigma$.

	The linear map $\sigma \in \SO(3) \subset \GL(3,\R)$ is a rotation having the unique finest $\sigma$-invariant splitting $\Delta \oplus \Delta^\perp = \R^3$.
	Identifying $\sigma$ with $\D \sigma$, it follows that, for any $\x \in \Delta$, the matrix representing $\sigma$ commutes with the matrix $\D_{\x}f_\mu$; assume $\x\in \Delta$ in the following.
	$\sigma$-invariance of the splitting $\Delta\oplus \Delta^\perp$ implies that $\D_\x f_\mu \Delta$ and $\D_\x f_\mu \Delta^\perp$ are $\sigma$-invariant subspaces, since
	\begin{equation*}
	\begin{split}
	\sigma \circ \D_\x f_\mu(\Delta) &= \D_\x f_\mu\circ \sigma(\Delta) = \D_\x f_\mu \Delta\\	
	\sigma \circ \D_\x f_\mu(\Delta^\perp) &= \D_\x f_\mu\circ \sigma(\Delta^\perp) = \D_\x f_\mu \Delta^\perp.
	\end{split}
	\end{equation*} 
	In particular, if $\D_\x f_\mu$ is invertible then $\D_\x f_\mu \Delta\oplus \D_\x f_\mu \Delta^\perp=\R^3$ is a $\sigma$-invariant splitting of $\R^3$ into one and two-dimensional subspaces, so uniqueness of the finest $\sigma$-invariant splitting $\Delta \oplus \Delta^\perp=\R^3$ implies that 
	\begin{equation}\label{eq:Df-Delta-eq-Delta}
	\D_\x f_\mu \Delta = \Delta, \qquad \D_\x f_\mu \Delta^\perp = \Delta^\perp.
	\end{equation}
	If $(x,\mu)$ is a point of generic Hopf bifurcation for $f_\mu$, then $\D_\x f_\mu$ is invertible and there is a unique finest $\D_\x f_\mu$-invariant splitting $ E\oplus E^c=\R^3$ into one and two-dimensional subspaces, with $E^c$ the two-dimensional center subspace.
	Uniqueness of the finest $\D_\x f_\mu$-invariant splitting and \eqref{eq:Df-Delta-eq-Delta} therefore imply that 
	\begin{equation}\label{eq:Ec-eq-Delta-perp}
	E^c = \Delta^\perp.
	\end{equation}

    \subsection{A closed $1$-form}\label{sec:dtheta}
	With respect to the orthogonal splitting $$\R^3 = \Delta \oplus \Delta^\perp,$$
	we may write any $\x\in \R^3$ uniquely as $$\x = \x_{\parallel} + \x_{\perp}$$
	with $\x_{\parallel}\in \Delta$ and $\x_\perp \in \Delta^\perp$.
	A direct computation shows that $\norm{\x_{\perp}}^2= \frac{2}{3}(\norm{\x}^2-\langle \x, \sigma(\x)\rangle)$, where $\norm{\slot}$ and $\langle \slot, \slot \rangle$ are the Euclidean norm and inner product.
	
	We now define a 1-form $d\theta$ on $\R^3\setminus \Delta$:
	\begin{equation}\label{eq:dth-def}
	d\theta \coloneqq \frac{\sqrt{3}}{2}\frac{(z-y)dx + (x-z)dy + (y-x)dz}{\norm{\x}^2-\langle \x, \sigma(\x)\rangle} = \frac{1}{\sqrt{3}}\frac{(z-y)dx + (x-z)dy + (y-x)dz}{\norm{\x_{\perp}}^2}.
	\end{equation}
	It can be shown that $d\theta$ is closed. 
	In fact, choose orthogonal coordinates $(u,v,w)$ adapted to the splitting $\R^3 = \Delta^\perp \oplus \Delta$ so that $(u,v)$ are coordinates for $\Delta^\perp$ and $w$ is a coordinate for $\Delta$.
	Then it can be shown that $d\theta$ is equal to the standard ``angle 1-form'' about the $w$-axis in these coordinates:
	$$d\theta = \frac{udv-vdu}{u^2+v^2}.$$ 
	Note that $\frac{d\theta}{2\pi}$ is Poincar\'{e} dual to $\{\x\colon x = y \text{ and } z < x\}$ on $\R^3\setminus \Delta$ (see \cite[Ex.~5.16(a)]{bott1982differential}).

   \subsection{The repressilator: existence of periodic orbits}\label{sec:repressilator}
   In this section we apply our theory to the repressilator (see \cite{elowitz2000synthetic, buse2009existence}) which models a synthetic
   genetic regulatory network consisting of a ring oscillator. 
   We consider here
   the three-dimensional reduced-order model studied in \cite{buse2009existence,buse2010dynamical} and prove existence of nonstationary periodic orbits. 
   This result was already established in \cite{buse2009existence}, but our proof is new.
   We note that our proof does not use techniques specific to the class of monotone cyclic feedback systems \cite{mallet1990poincare} to which this repressilator model belongs. 
      
   Fix $s > 0$ and consider the one-parameter family of ODEs on $\R^3$ given by 
   	\begin{equation}\label{eq:repressilator}
   	\begin{split}
   	\dot{x} &= \frac{\mu}{1+y^s} - x\\
   	\dot{y} &= \frac{\mu}{1+z^s} - y\\
   	\dot{z} &= \frac{\mu}{1+x^s} - z,
   	\end{split}
   	\end{equation}
   	with parameter $\mu\in\R$.
   	Let $\R^3_+\coloneqq \{(x,y,z)\in \R^3\colon x,y,z \geq 0\}$ be the closed positive orthant.
   	Notice that, for any $s,\mu > 0$, $\R^3_+$ is positively invariant for the flow $f_{s,\mu}$ of \eqref{eq:repressilator}.
   	Furthermore, since $0< \frac{\mu}{1+r^s}\leq \mu$ whenever $s,\mu > 0$ and $r\geq 0$, it follows that the cube $\{(x,y,z)\colon 0 \leq x,y,z \leq \mu \}$ is positively invariant and attracts every initial condition $\x\in \R^3_+$.
   	It follows that the same is true of the interior of the smaller cube
   	 $$K_\mu\coloneqq \left\{(x,y,z)\colon \frac{\mu}{2+\mu^s}\leq x,y,z \leq \mu\right \}$$
   	 since $\frac{\mu}{1+r^s} \geq \frac{\mu}{1+(\mu+\epsilon)^s} >  \frac{\mu}{2+\mu^s}$ whenever $r \leq \mu+\epsilon$ and $\epsilon > 0$ is sufficiently small; in particular, $\partial K_\mu$ immediately flows into $\interior(K_\mu)$. 
   	
   	We now prove that \eqref{eq:repressilator} has a periodic orbit for all $s> 2$ and $\mu > \mu_c$, where $\mu_c$ is defined below.
   	To do this, we simply verify that \eqref{eq:repressilator} satisfies all of the hypotheses of Theorem \ref{th:hopf}.
   	We delay the (slightly lengthier) verification of hypothesis \ref{item:th-hopf-3} of Theorem \ref{th:hopf} to \S \ref{sec:rotation-repress} below.
   	
   	\begin{Th}\label{th:repress}
   		Let $f_{s,\mu} = f(\slot,s,\mu)$ be the repressilator vector field \eqref{eq:repressilator}.
   		Fix $s> 2$ and define $$\mu_c\coloneqq \left(\frac{2}{s-2}\right)^{\frac{s+1}{s}} + \left(\frac{2}{s-2}\right)^{\frac{1}{s}}.$$   		Then for all $\mu > \mu_c$, $f_{s,\mu}$ has a periodic orbit contained in the cube $K_\mu$.
   	\end{Th}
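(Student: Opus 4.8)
The plan is to verify that the repressilator family $f_{s,\mu}$ (for fixed $s>2$) satisfies all the hypotheses of Theorem~\ref{th:hopf}, with the following data: $Q=\R^3$ with its standard orientation; $M\coloneqq\Delta\times\R$, which is flow-invariant by \S\ref{sec:symmetry}; $N\coloneqq\{\x\colon x=y,\ z\le x\}\times\R$, a properly embedded, oriented, codimension-$1$ submanifold of $\R^3\times\R$ with $\partial N=M$; and $\cf\coloneqq\tfrac{1}{2\pi}d\theta$, regarded as a $C^1$ closed $1$-form on $(Q\times\R)\setminus M=(\R^3\setminus\Delta)\times\R$ via the first projection, which represents the Poincar\'e dual of $N\setminus M=(\{x=y,\ z<x\})\times\R$ by \S\ref{sec:dtheta} (and compatibility of Poincar\'e duality with the product by $\R$). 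For the remaining data I would take $\cc\coloneqq\{(\x,\mu)\colon\mu\ge\mu^*,\ \x\in K_\mu\}$ and $(x_c,\mu_c)\coloneqq(r_c\1,\mu_c)$ with $r_c\coloneqq(2/(s-2))^{1/s}$, where $\mu^*\in(0,\mu_c)$ is chosen below. Since each defining inequality of $\cc$ is closed and continuous in $(\x,\mu)$ and $\x\in K_\mu$ forces $\norm{\x}\le\sqrt3\,\mu$, the set $\cc_{[\mu^*,\mu]}$ is compact for every $\mu\ge\mu^*$, which is hypothesis~\ref{item:th-hopf-4}.

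Next I would pin down the equilibria and the Hopf point. Writing $g(w)\coloneqq\mu/(1+w^s)$, an equilibrium satisfies $x=g(g(g(x)))$; since $g$ is strictly decreasing, $g\circ g\circ g$ is strictly decreasing and has a unique fixed point, which is therefore the symmetric one $r_\mu\1$ with $r_\mu(1+r_\mu^s)=\mu$, and one checks $\tfrac{\mu}{2+\mu^s}<r_\mu<\mu$, so $r_\mu\1\in\interior(K_\mu)$. Hence $\cc\cap M$ is the flow-invariant diagonal arc $\{(r\1,\mu)\}$, on which the dynamics $\dot r=g(r)-r$ is one-dimensional and strictly decreasing in $r$ and so admits no nonstationary periodic orbit: this is hypothesis~\ref{item:th-hopf-6}. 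The linearization at $r_\mu\1$ is $\D f_\mu=-I+a_\mu P$ with $P$ the cyclic permutation matrix and $a_\mu=g'(r_\mu)=-\tfrac{s r_\mu^s}{1+r_\mu^s}<0$, whose eigenvalues are $-1+a_\mu$ and $-1+a_\mu\omega^{\pm1}$ for $\omega=e^{2\pi i/3}$. A purely imaginary pair occurs exactly when $a_\mu=-2$, equivalently $(s-2)r_\mu^s=2$, equivalently $r_\mu=r_c$ and $\mu=r_c(1+r_c^s)=\mu_c$; the third eigenvalue is then $-3\ne0$, so $(x_c,\mu_c)$ is a generalized center with two-dimensional center subspace $E^c$, and for $\mu\ne\mu_c$ no eigenvalue of $\D f_\mu$ at $r_\mu\1$ is purely imaginary, so $(x_c,\mu_c)$ is the only generalized center in $\cc_{[\mu^*,\infty)}$. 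Genericity of the Hopf point needs the transversality condition $\tfrac{d}{d\mu}\textnormal{Re}(-1+a_\mu\omega)\big|_{\mu_c}=-\tfrac12 a_\mu'(\mu_c)\ne0$ (a short one-variable computation, using $r_\mu'>0$ and $a_\mu$ strictly decreasing in $r_\mu$) together with the nonvanishing of the first Lyapunov coefficient, which is the finite normal-form computation carried out in \cite{buse2009existence}; since Theorem~\ref{th:hopf} covers both sub- and supercritical cases, the sign is immaterial. As $r_c>0$, $f$ is $C^\infty$ near $(x_c,\mu_c)$, so hypothesis~\ref{item:th-hopf-5} holds. Finally \S\ref{sec:symmetry} gives $E^c=\Delta^\perp$ (equation~\eqref{eq:Ec-eq-Delta-perp}); since $\T_{(x_c,\mu_c)}M=\Delta\oplus\R$ and $\D_{(x_c,\mu_c)}\iota_\mu E^c=\Delta^\perp\oplus\{0\}$ while $\Delta\oplus\Delta^\perp=\R^3$, hypothesis~\ref{item:th-hopf-7} follows at once.

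For the trapping-type hypotheses, recall from the preliminary discussion that $\partial K_\mu$ flows strictly into $\interior(K_\mu)$ for every $\mu>0$ and $K_\mu$ is positively invariant, so no periodic orbit of $f_\mu$ can meet $\partial K_\mu$; together with the fact that any periodic orbit meeting $K_{\mu^*}$ is contained in $K_{\mu^*}$, hypothesis~\ref{item:th-hopf-1} (verified now) yields hypothesis~\ref{item:th-hopf-2}. For hypothesis~\ref{item:th-hopf-1} I would choose $\mu^*\in(0,\mu_c)$ small enough that $s(\mu^*)^s<1$ (possible since $\mu_c$ and $s^{-1/s}$ are both positive). On $K_{\mu^*}$ one has $\D f_{\mu^*}=-I+B$, where $B$ has the three off-diagonal entries $g'(x),g'(y),g'(z)$ and $B^{\mathsf T}B=\textnormal{diag}(g'(x)^2,g'(y)^2,g'(z)^2)$, so $\norm{B}_2=\max\{|g'(x)|,|g'(y)|,|g'(z)|\}\le\mu^*s(\mu^*)^{s-1}=s(\mu^*)^s<1$; hence the symmetric part of $\D f_{\mu^*}$ is negative definite uniformly on the convex set $K_{\mu^*}$, which forces $t\mapsto\norm{\Phi^t_{\mu^*}(\x)-r_{\mu^*}\1}$ to be strictly decreasing along every non-equilibrium trajectory in $K_{\mu^*}$, ruling out periodic orbits there.

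Having checked hypotheses~\ref{item:th-hopf-1}, \ref{item:th-hopf-2}, \ref{item:th-hopf-4}, \ref{item:th-hopf-5}, \ref{item:th-hopf-6}, \ref{item:th-hopf-7}, and granting hypothesis~\ref{item:th-hopf-3} (the uniform positivity of $d\theta(f_\mu)$ on $K_\mu\setminus\Delta$, deferred to \S\ref{sec:rotation-repress}), Theorem~\ref{th:hopf} produces, for every $\mu>\mu_c$, a nonstationary periodic orbit contained in $(\cc\setminus M)_{\{\mu\}}=K_\mu\setminus\Delta\subset K_\mu$, which is precisely the assertion. The genuine obstacle is hypothesis~\ref{item:th-hopf-3}, but that is exactly the part the text postpones; within the present argument the only non-bookkeeping steps are the Hopf nondegeneracy (transversality together with the first Lyapunov coefficient) and the elementary contraction estimate ruling out periodic orbits at the base parameter $\mu^*$.
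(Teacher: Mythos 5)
Your proposal is correct and follows the same overall route as the paper: verify the hypotheses of Theorem~\ref{th:hopf} with the identical choices $M=\Delta\times\R$, $N=\{x=y,\ z\le x\}\times\R$, $\cf=\pi_1^*\bigl(\tfrac{d\theta}{2\pi}\bigr)$, a set $\cc$ built from the cubes $K_\mu$, and with hypothesis~\ref{item:th-hopf-3} delegated to Proposition~\ref{prop:dth}, exactly as the paper does. The differences are in two sub-verifications, and they are sound. For hypothesis~\ref{item:th-hopf-1} the paper argues locally: exponential stability of the origin at $\mu=0$ plus a Taylor/Lyapunov estimate shows $K_\mu$ lies in the basin of the unique equilibrium for all sufficiently small $\mu>0$, with $\mu^*$ unspecified; you instead prove a global contraction on the convex set $K_{\mu^*}$ (the symmetric part of $-I+B$ is negative definite once $\norm{B}_2\le s(\mu^*)^s<1$), which is a cleaner argument and yields an explicit admissible range $\mu^*<\min\{s^{-1/s},\mu_c\}$. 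For hypothesis~\ref{item:th-hopf-5} the paper cites \cite{buse2009existence} wholesale; you rederive most of it (uniqueness of the equilibrium from monotonicity of $g\circ g\circ g$, the circulant spectrum $-1+a_\mu\omega^k$, the identification $a_{\mu_c}=-2$ giving the stated formula for $\mu_c$, and eigenvalue transversality), citing \cite{buse2009existence} only for the nonvanishing of the first Lyapunov coefficient; this makes the proof more self-contained and, as a bonus, verifies that the displayed formula for $\mu_c$ really is the Hopf value, which the paper leaves to the reference. Two minor remarks: your $\cc$ is truncated at $\mu^*$ rather than at $0$, so the entire slice $K_{\mu^*}\times\{\mu^*\}$ (which contains the equilibrium $r_{\mu^*}\1$) lies in $\partial\cc$; this is harmless because hypotheses~\ref{item:th-hopf-1}--\ref{item:th-hopf-2} concern nonstationary periodic orbits (the paper's own verification of hypothesis~\ref{item:th-hopf-1} requires this reading, since the equilibrium always lies in $\cc_{\{\mu^*\}}$), and any periodic orbit meeting the positively invariant $K_{\mu^*}$ is contained in it and hence excluded by your contraction argument. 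Deferring hypothesis~\ref{item:th-hopf-3} is also consistent with the paper, whose proof of Theorem~\ref{th:repress} likewise quotes Proposition~\ref{prop:dth} proved in \S\ref{sec:rotation-repress}.
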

   \begin{Rem}\label{rem:classical-difficulty-repress}
For the reasons explained in Remark~\ref{rem:classical-result-difficulties}, it seems very difficult to prove Theorem~\ref{th:repress} directly using either of the classical continuation results Proposition~\ref{prop:p-global-cont} (\cite[Thm 4.2, Thm 2.2]{mallet1982snakes,alligood1983index}) or Proposition~\ref{prop:global-cont} (\cite[Thm~3.1]{alligood1984families}).
   \end{Rem}
   	
   	\begin{proof}
   		Define $\cc \coloneqq \{(\x,\mu)\in \R^3\times \R\colon \mu \geq 0 \text{ and } \x \in K_{\mu} \}$, $N\coloneqq \{\x\colon x = y \text{ and } z \leq x\}\times \R$, and $M\coloneqq \Delta \times \R = \partial N.$ 
   		
   		Since the origin is exponentially stable for $f_{s,0}$, there exists $\mu^* > 0$ such that, for all $0 <\mu \leq\mu^*$, $f_{s,\mu}$ has no periodic orbits whose images intersect\footnote{Proof: fix $s> 2$. It is shown in \cite{buse2009existence} that \eqref{eq:repressilator} has a unique equilibrium $\x_{s,\mu}\in K_\mu$ for all $\mu\geq 0$ which depends continuously on $\mu$; define $V_{s,\mu}(\x)\coloneqq \frac{1}{2}\norm{\x-\x_{s,\mu}}^2$. Applying Taylor's theorem to $f_{s,\mu}$ about the point $\x_{s,\mu}$ shows that the derivative of $V_{s,\mu}$ along the flow of $f_{s,\mu}$ is $\dot{V}_{s,\mu}(\x) = \langle \x-\x_{s,\mu},f_{s,\mu}(\x)\rangle= -\norm{\x-\x_{s,\mu}}^2 + R_s(\x,\mu)\norm{\x-\x_{s,\mu}}^2$, where $R_s$ is continuous and satisfies $R_s(\slot,0)\equiv 0$.
   		Hence $|R_s|<\frac{1}{2}$ on some neighborhood $U$ of $\R^3\times \{0\}$, so $\dot{V}_{s,\mu}(\x)\leq -\frac{1}{2}\norm{\x-\x_{s,\mu}}^2$ for all $(\x,\mu)\in U$.
   		Continuity of $\mu\mapsto \x_{s,\mu}$ therefore implies that there are $\mu_0,\epsilon_0 > 0$ such that, for all $0<\epsilon \leq \epsilon_0$ and $0< \mu \leq \mu_0$, $\dot{V}_{s,\mu} \leq -\frac{1}{2}\norm{\x-\x_{s,\mu}}^2$ on the closed ball $B_\epsilon(\x_{s,\mu})$ of radius $\epsilon$ centered at $\x_{s,\mu}$.
   	    For such values of $\epsilon,\mu$ it follows that $B_\epsilon(\x_{s,\mu})$ is contained in the stability basin of $\x_{s,\mu}$ and so does not meet the images of any nonstationary periodic orbits. 
   	    Finally, defining $\mu^*\coloneqq \min\{\frac{\epsilon_0}{2\sqrt{3}},\mu_0\}$ suffices to prove the claim since $K_{\mu}\subset \{0\leq x,y,z \leq \mu\}\subset B_{2\sqrt{3}\mu}(\x_{s,\mu}) \subset B_{\epsilon_0}(\x_{s,\mu})$ for $0<\mu< \mu^*$, with the second inclusion following since $\x_{s,\mu}\in K_\mu$. (Something stronger is actually true: $\x_{s,\mu}$ is globally asymptotically stable for $|\mu|$ sufficiently small, but we will not need this. This fact follows from \cite[Cor.~2.3]{smith1999perturbation}.)} $K_{\mu}$, so in particular hypothesis \ref{item:th-hopf-1} of Theorem \ref{th:hopf} is satisfied.
   		If $\mu > 0$ and $\Phi_{s,\mu}$ is the flow of $f_{s,\mu}$, then every initial condition $\x\in \partial K_\mu$ satisfies $\Phi_{s,\mu}^t(\x)\in \interior(K_\mu)$ for all $t > 0$, so no periodic orbits of $f_{s,\mu}$ intersect $\partial K_{\mu}$; hence hypothesis \ref{item:th-hopf-2} of Theorem \ref{th:hopf} is satisfied.
   		The compactness hypothesis \ref{item:th-hopf-4} is satisfied since any set of the form $\cc_{[\mu^*,\mu]}\coloneqq \cc\cap (\R^3\times [\mu^*,\mu])$ is a closed subset of the compact set $\{\x\colon 0\leq x,y,z\leq \mu \}\times [\mu^*,\mu]$.

   		In \cite[Sec.2, Appendix]{buse2009existence} it is shown that there is exactly one generalized center $(\x_c,\mu_c)$ for $f_s$, that $\x_c\in \Delta \cap \interior(K_{\mu_c})$, that $\mu_c > 0$ (hence we may assume $\mu^* < \mu_c$), and that $f_s$ undergoes a supercritical generic Hopf bifurcation at $(\x_c,\mu_c)$.
   		Hence hypothesis \ref{item:th-hopf-5} of Theorem \ref{th:hopf} is satisfied.
   		Hypothesis \ref{item:th-hopf-6} is satisfied because $\Delta$ is an invariant manifold for $f_{s,\mu}$ by symmetry (see \S \ref{sec:symmetry}) and $\Delta$ is diffeomorphic to $\R$, so no nonstationary periodic orbits can intersect $\Delta$.   		
   		Finally, the center subspace $E^c$ of $\D_{\x_c}f_{s,\mu_c}$ is orthogonal to $\Delta$ by Equation \eqref{eq:Ec-eq-Delta-perp}, so hypothesis \ref{item:th-hopf-7} is satisfied.
   		
   		In \S \ref{sec:dtheta} we defined a closed 1-form $\frac{d\theta}{2\pi}$ on $\R^3\setminus \Delta$ such that $\frac{d\theta}{2\pi}$ is Poincar\'{e} dual to $\{x = y \text{ and } z< x\}$ on $\R^3\setminus \Delta$.
   		In \S \ref{sec:rotation-repress} below, in Proposition \ref{prop:dth} we prove that, for every $s>2$, $\mu_1 > \mu^*$,
   		there exists $\epsilon > 0$ such that
   		$\frac{d\theta}{2\pi}(f_{s,\mu}) \geq \epsilon$ on $ K_\mu\setminus \Delta$ for all $\mu \in [\mu^*,\mu_1]$.
   		Let $\pi_1\colon \R^3\times \R\to \R^3$ denote the projection onto the first factor, and for any $\mu\in \R$ let $\iota_\mu\colon \R^3 \hookrightarrow \R^3\times \R$ be the inclusion $\iota_\mu(\x)=(\x,\mu)$.
   		Defining $\cf\coloneqq \pi_1^* (\frac{d\theta}{2\pi})$, noting that $\pi_1^* (\frac{d\theta}{2\pi})$ is Poincar\'{e} dual to $N\setminus M$ in $(\R^3\times \R)\setminus M$ \cite[p.~69]{bott1982differential}, and noting that $\iota_\mu^*\cf = \frac{d\theta}{2\pi}$ for any $\mu \in \R$, it follows that the lone remaining hypothesis \ref{item:th-hopf-3} of Theorem \ref{th:hopf} is also satisfied.
   		This completes the proof.
   	\end{proof}
   	
	\subsubsection{Rotational rate of the flow}\label{sec:rotation-repress}	
    In this section, we complete the proof of Theorem \ref{th:repress} by showing that $\frac{d\theta}{2\pi}$ satisfies the remaining hypothesis \ref{item:th-hopf-3} of Theorem \ref{th:hopf}.	
    \begin{Lem}\label{lem:dth-lem1}
    Fix $s,\mu > 0$ and let $f_{s,\mu}$ be the repressilator vector field \eqref{eq:repressilator} on $\R^3$.
    Let $\R^3_+$ be the closed positive orthant and $\Delta\subset \R^3$ be the diagonal.
    Then $d\theta(f_{s,\mu})> 0$ on $\R^3_+\setminus \Delta$.
    \end{Lem}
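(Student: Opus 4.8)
The plan is to reduce the statement to an elementary monotonicity inequality by exploiting a cancellation. Recall from \eqref{eq:dth-def} that $d\theta = \frac{1}{\sqrt 3}\,\frac{(z-y)\,dx + (x-z)\,dy + (y-x)\,dz}{\norm{\x_{\perp}}^2}$ and that $\norm{\x_{\perp}}^2 = \frac{2}{3}(\norm{\x}^2 - \langle \x,\sigma(\x)\rangle)$ vanishes exactly on $\Delta$ and is positive on $\R^3\setminus\Delta$. Hence it suffices to show that the numerator $(z-y)\dot x + (x-z)\dot y + (y-x)\dot z$ is positive at every point of $\R^3_+\setminus\Delta$, where $(\dot x,\dot y,\dot z)=f_{s,\mu}(\x)$.

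First I would substitute the repressilator equations, abbreviating $g(r)\coloneqq \mu/(1+r^s)$ so that $\dot x = g(y)-x$, $\dot y = g(z)-y$, $\dot z = g(x)-z$. The key observation is that the three linear terms contribute $-[(z-y)x + (x-z)y + (y-x)z]$, which expands to $0$. Thus the numerator equals the purely ``$g$-part'' $N\coloneqq (y-x)g(x) + (z-y)g(y) + (x-z)g(z)$, a cyclic expression in $(x,y,z)$. It then remains to prove $N>0$ on $\R^3_+\setminus\Delta$. Since $g'(r) = -\mu s r^{s-1}/(1+r^s)^2<0$ for $r>0$ and $g$ is continuous, $g$ is strictly decreasing on $[0,\infty)$. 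Because $N$ is invariant under the cyclic permutation $(x,y,z)\mapsto(y,z,x)$, I may assume $x=\min\{x,y,z\}$, which leaves two cases. If $x\le y\le z$, substituting $x-z=-(y-x)-(z-y)$ rewrites $N = (y-x)\bigl(g(x)-g(z)\bigr) + (z-y)\bigl(g(y)-g(z)\bigr)$, a sum of two nonnegative terms; if $x\le z\le y$, substituting $y-x=(y-z)+(z-x)$ rewrites $N = (y-z)\bigl(g(x)-g(y)\bigr) + (z-x)\bigl(g(x)-g(z)\bigr)$, again a sum of nonnegative terms. In each case a short inspection of when both terms vanish, using strict monotonicity of $g$, shows $N=0$ forces $x=y=z$; hence $N>0$ on $\R^3_+\setminus\Delta$, completing the proof. (One could alternatively phrase the last step as the strict rearrangement inequality applied to the oppositely ordered triples $(x,y,z)$ and $(g(x),g(y),g(z))$.)

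I do not expect a serious obstacle here: the whole argument hinges on spotting that the linear part of $f_{s,\mu}$ cancels in the numerator, after which positivity is forced by the monotonicity of $g$. The only points requiring care are that the available symmetry is merely \emph{cyclic}, so one may normalize $x$ to be the minimum but not impose a full ordering on $\{x,y,z\}$, and that strictness must be tracked through the degenerate sub-cases so as to identify the zero set of $N$ with $\Delta$ exactly.
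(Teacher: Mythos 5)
Your proof is correct and follows essentially the same route as the paper: both exploit the cancellation of the linear part of $f_{s,\mu}$ in the numerator of $d\theta$ and then deduce strict positivity of the remaining cyclic sum from strict monotonicity of $r\mapsto \mu/(1+r^s)$ (the paper works with the reciprocal $p(r)=\mu^{-1}(1+r^s)$ and singles out the coefficient among $z-y$, $x-z$, $y-x$ whose sign differs from the other two, while you normalize by the cyclic symmetry and regroup into two nonnegative products, a purely cosmetic difference). Your tracking of the degenerate sub-cases to show the zero set is exactly $\Delta$ matches the paper's strictness argument, so nothing is missing.
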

    
    \begin{proof}
	Define the 1-form $\omega\coloneqq (z-y)dx + (x-z)dy + (y-x)dz$ to be the ``numerator'' of $d\theta$.
	It suffices to show that $\omega(f_{s,\mu})>0$ on $\R^3_+\setminus \Delta$.
	We compute    
	\begin{equation}
	\omega(f_\mu) = \frac{z-y}{p(y)}+\frac{x-z}{p(z)}+\frac{y-x}{p(x)},	
	\end{equation}
	where the positive function $p$ is defined as $p(r)\coloneqq \mu^{-1}\cdot(1+r^s)$.
	Define the function
	\begin{equation}
	N(x,y,z)\coloneqq (z-y)p(z)p(x) + (x-z)p(x)p(y) + (y-x)p(y)p(z).
	\end{equation}
	Writing $\omega \cdot f_{s,\mu}\coloneqq \omega(f_{s,\mu})$, note that $(\omega\cdot f_{s,\mu})(x,y,z) = \frac{N(x,y,z)}{p(x)p(y)p(z)}$, so that $\omega(f_{s,\mu}) > 0$ if and only if $N > 0$.
	
	Let $\x = (x,y,z)\in \R^3_+\setminus \Delta$ and consider the terms $(z-y)$, $(x-z)$, $(y-x)$. Since these terms sum to zero and since $\x\not \in \Delta$, it must be the case that there is one nonzero term which has a different sign than both of the other two terms.\footnote{Note that this term need not be unique, since one of the terms may be zero.}
	Divide this term which has sign different from the other two by the pair of functions that 
	multiply it. 
	Without loss of generality, assume that $(z-y)$ is nonzero and has sign different from $(x-z),(y-x)$.
	We obtain
	\begin{equation}
	\frac{N(x,y,z)}{p(z)p(x)}=(z-y) + (x-z)\frac{p(y)}{p(z)} + (y-x)\frac{p(y)}{p(x)}.
	\end{equation}  
	Since $r\mapsto p(r)$ is strictly increasing, in the case that $(z-y)>0$ we obtain $\frac{p(y)}{p(z)} < 1$ and $\frac{p(y)}{p(x)}\leq 1$, with $\frac{p(y)}{p(x)} = 1$ if and only if $(y - x)=0$.
	It is clear that $\frac{N(x,y,z)}{p(z)p(x)}$---and hence $N(x,y,z)$---is positive in this case.
	Similarly, in the case that $(z-y)<0$ we obtain $\frac{p(y)}{p(z)}>1$ and $\frac{p(y)}{p(x)}\geq 1$, with $\frac{p(y)}{p(x)} = 1$ if and only if $(y-x)=0$, so again $N(x,y,z)$ is positive.
	As discussed it follows that, in both cases, we have $(\omega \cdot f_{s_,\mu})(\x)>0$ and hence also $(d\theta \cdot f_{s,\mu})(\x) > 0$, completing the proof. 
	  	
    \end{proof}
   	For use in Lemma \ref{lem:dth-lem2} and Proposition \ref{prop:dth} below, we recall the definition of the set $$\cc \coloneqq \{(\x,\mu)\in \R^3\times \R\colon \mu \geq 0 \text{ and } \x \in K_{\mu} \},$$ where again $K_\mu$ is defined for $\mu \geq 0$ as $$K_\mu\coloneqq \left\{(x,y,z)\colon \frac{\mu}{2+\mu^s}\leq x,y,z \leq \mu\right \}.$$
   	For any interval $J\subset \R$, we also define $\cc_{J}\coloneqq \cc \cap (\R^3\times J)$.
   	
   	\begin{Lem}\label{lem:dth-lem2}
   		Fix $s>2$ and $\mu^* > 0$.
   		Then for every $\mu_1 > \mu^*$, there exists $\delta > 0$ and a relatively open neighborhood $U\subset \cc_{[\mu^*,\mu_1]}$ of $(\Delta\times [\mu^*,\mu_1]) \cap \cc_{[\mu^*,\mu_1]}$ such that, for all $(\x,\mu)\in U\setminus (\Delta\times [\mu^*,\mu_1])$,  $$d\theta( f_{s,\mu}(\x)) \geq \delta.$$
   	\end{Lem}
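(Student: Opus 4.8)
The plan is to reduce the estimate to a computation with the linearization of $f_{s,\mu}$ along the diagonal $\Delta$. Lemma~\ref{lem:dth-lem1} already gives $d\theta(f_{s,\mu})>0$ on $\R^3_+\setminus\Delta$, so away from a neighborhood of $\Delta$ a positive lower bound on the relevant compact set is automatic by continuity; the only real issue is that $d\theta$ carries the denominator $\norm{\x_{\perp}}^2$, which vanishes on $\Delta$, so we must understand $d\theta(f_{s,\mu}(\x))$ as $\x\to\Delta$. I will show this quantity extends continuously to $\Delta$ with a uniformly positive value, and then extract $U$ and $\delta$ by a compactness argument.

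First I would pass to the complex coordinate on $\Delta^\perp$: pick orthonormal linear coordinates $(u,v)$ on $\Delta^\perp$ for which $d\theta=\frac{u\,dv-v\,du}{u^2+v^2}$ as in \S\ref{sec:dtheta}, set $\zeta\coloneqq u+iv$, and write $\x=\x_{\parallel}+\x_{\perp}$ with $\x_{\parallel}=\tfrac{x+y+z}{3}\1$ and $\x_{\perp}$ identified with $\zeta$. Then $d\theta(f_{s,\mu}(\x))=\Im(\dot\zeta/\zeta)$, where $\dot\zeta$ denotes the $\Delta^\perp$-component of $f_{s,\mu}(\x)$. By the $\sigma$-invariance of $\Delta$ (\S\ref{sec:symmetry}), $\dot\zeta$ vanishes identically on $\Delta$; and by \eqref{eq:Df-Delta-eq-Delta} the derivative $\D_{(r,r,r)}f_{s,\mu}$ preserves $\Delta^\perp$, where it commutes with the order-$3$ rotation $\sigma|_{\Delta^\perp}$ and hence acts as multiplication by a complex scalar $\lambda_{r,\mu}$. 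A direct computation gives $\D_{(r,r,r)}f_{s,\mu}=-I-\beta_{r,\mu}\sigma$ with $\beta_{r,\mu}=\frac{s\mu\,r^{s-1}}{(1+r^s)^2}$, so $\lambda_{r,\mu}=-1-\beta_{r,\mu}e^{\pm 2\pi i/3}$ and $\Im\lambda_{r,\mu}=\pm\tfrac{\sqrt3}{2}\beta_{r,\mu}$, with a sign independent of $(r,\mu)$. Taylor-expanding $\dot\zeta$ to first order in $\zeta$ about $\zeta=0$ (legitimate since $f_{s,\mu}$ is smooth near diagonal points with $r>0$), with the remainder controlled by the modulus of continuity of $\D f_{s,\mu}$, yields
\[
d\theta(f_{s,\mu}(\x))=\Im\lambda_{r,\mu}+o(1)\qquad\text{as }\norm{\x_{\perp}}\to 0,
\]
uniformly for $(r,\mu)$ in compact subsets of $(0,\infty)\times\R$. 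Taking this limit along any approach to a diagonal point in the open positive orthant and invoking Lemma~\ref{lem:dth-lem1} forces $\Im\lambda_{r,\mu}\ge 0$; since $\beta_{r,\mu}>0$ when $r>0$, we conclude $\Im\lambda_{r,\mu}=\tfrac{\sqrt3}{2}\beta_{r,\mu}>0$.

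To finish, observe that if $(\x,\mu)\in\cc_{[\mu^*,\mu_1]}$ then $r\coloneqq\tfrac{x+y+z}{3}\in[r_0,\mu_1]$, where $r_0\coloneqq\min_{\mu\in[\mu^*,\mu_1]}\frac{\mu}{2+\mu^s}>0$ (using $\mu^*>0$), so $(\x_{\parallel},\mu)$ ranges over the compact set $\{((r,r,r),\mu):r\in[r_0,\mu_1],\ \mu\in[\mu^*,\mu_1]\}$, on which $\Im\lambda_{r,\mu}=\tfrac{\sqrt3}{2}\beta_{r,\mu}\ge c_0$ for some $c_0>0$. Choosing $\rho_0>0$ small enough that the $o(1)$ term above is less than $c_0/2$ in absolute value whenever $\norm{\x_{\perp}}<\rho_0$ and $(\x_{\parallel},\mu)$ lies in that compact set, the set $U\coloneqq\{(\x,\mu)\in\cc_{[\mu^*,\mu_1]}:\norm{\x_{\perp}}<\rho_0\}$ is a relatively open neighborhood of $(\Delta\times[\mu^*,\mu_1])\cap\cc_{[\mu^*,\mu_1]}$ on which $d\theta(f_{s,\mu}(\x))\ge\delta\coloneqq c_0/2$ for all $(\x,\mu)\in U\setminus(\Delta\times[\mu^*,\mu_1])$, which is the claim. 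The crux is the middle step: $d\theta(f_{s,\mu})$ is a $0/0$ expression on $\Delta$, and the proof that it nonetheless stays uniformly positive near $\Delta$ hinges on (i) invariance of $\Delta$, which forces $\dot\zeta$ to vanish to first order along $\Delta$ and thereby cancels the singular denominator; (ii) the block-diagonal form of $\D f_{s,\mu}$ along $\Delta$, which identifies the limiting value as the imaginary part of the (genuinely complex, hence nonzero) transverse eigenvalue; and (iii) Lemma~\ref{lem:dth-lem1}, used to pin down the sign. Given the expansion, the passage to $U$ and $\delta$ is routine compactness.
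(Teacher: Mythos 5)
Your proposal is correct, and it arrives at the same limiting quantity as the paper by a genuinely different organization of the computation. The paper works with the scalar numerator $\omega\cdot f_{s,\mu}$: it shows that this function and its first derivative vanish along $\Delta$, computes the Hessian at $(r,r,r)$ explicitly, and uses a second-order Taylor expansion with cubic remainder, so that after dividing by $\sqrt{3}\norm{\x_\perp}^2$ one gets $d\theta(f_{s,\mu})(\x)=-\tfrac{\sqrt{3}}{2}q_\mu'\bigl(\tfrac{x+y+z}{3}\bigr)+O(\norm{\x_\perp})$ as in \eqref{eq:om-f-taylor}, followed by the same compactness step you perform. You instead expand the vector field itself to first order transverse to the invariant diagonal and identify the limit of the $0/0$ expression as $\Im\lambda_{r,\mu}$, the imaginary part of the complex eigenvalue of $\D_{(r,r,r)}f_{s,\mu}|_{\Delta^\perp}=-I-\beta_{r,\mu}\,\sigma|_{\Delta^\perp}$; since $\Im\lambda_{r,\mu}=\pm\tfrac{\sqrt{3}}{2}\beta_{r,\mu}$ and $\beta_{r,\mu}=-q_\mu'(r)$, the two computations agree. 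Your framing is conceptually attractive (the bound is exhibited as the rotation rate of the transverse linearization, the quantity driving the Hopf bifurcation), and your device of fixing the orientation-dependent sign by taking the limit from inside $\R^3_+\setminus\Delta$ and invoking Lemma~\ref{lem:dth-lem1}, together with the observation that the sign is a single orientation constant, legitimately avoids the explicit bookkeeping carried out by the paper's Hessian computation; the cost is that Lemma~\ref{lem:dth-lem1} becomes an ingredient of this lemma rather than entering only later in Proposition~\ref{prop:dth}, which is harmless since it precedes this statement. Two minor points: your citation of \eqref{eq:Df-Delta-eq-Delta} is not strictly licensed as stated, since that identity was derived under the assumption that $\D_\x f_\mu$ is invertible, but this is immaterial because the explicit formula $-I-\beta_{r,\mu}\sigma$ (or mere commutation with $\sigma$, whose nontrivial isotypic component is $\Delta^\perp$) gives the $\Delta^\perp$-invariance directly; and your uniform control of the $o(1)$ remainder via the modulus of continuity of $\D f_{s,\mu}$ on the compact set $\{r\in[r_0,\mu_1],\ \mu\in[\mu^*,\mu_1],\ \norm{\x_\perp}\le\rho_0\}$ is exactly the uniformity the paper encodes through continuity of $(\x_\parallel,\mu)\mapsto R_\mu(\x_\parallel)$, so your final extraction of $U$ and $\delta$ is sound.
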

   	
   	\begin{proof}
   		Define the 1-form $\omega\coloneqq (z-y)dx + (x-z)dy + (y-x)dz$ to be the ``numerator'' of $d\theta$.
   		Writing $\omega \cdot f_{s,\mu}\coloneqq \omega(f_{s,\mu})$ and defining $q_\mu(r)\coloneqq \frac{\mu}{1+r^s}$ for $\mu > 0$, we have
   		\begin{equation}\label{eq:om-f}
   		\omega \cdot f_{s,\mu} = (z-y)q_\mu(y) + (x-z)q_\mu(z) + (y-x)q_\mu(x),
   		\end{equation}
   		so
   		\begin{equation}\label{eq:om-f-0}
   		(\omega \cdot f_{s,\mu})|_\Delta \equiv 0.
   		\end{equation}
   		Note that $q_\mu$ is $C^\infty$ on $\R\setminus \{0\}$.
   		From \eqref{eq:om-f} we compute the first derivative $\D_\x(\omega\cdot f_{s,\mu})$ at $\x=(x,y,z)\neq 0$ to be
   		\begin{equation}\label{eq:D-om-f}
   		\begin{split}
   		\D_\x(\omega\cdot f_{s,\mu}) = \big[
   		q_\mu(z) - q_\mu(x) + (y-x)q_\mu'(x),\quad q_\mu(x)-q_\mu(y) + (z-y)q_\mu'(y),\\  q_\mu(y) - q_\mu(z) + (x-z)q_\mu'(z)
   		\big],
   		\end{split}
   		\end{equation} 
   		from which it follows that
   		\begin{equation}\label{eq:D-om-f-0}
   		\D(\omega\cdot f_{s,\mu})|_{\Delta\setminus \{0\}} \equiv 0. 
   		\end{equation}
   		From \eqref{eq:D-om-f} we compute the second derivative at $(r,r,r)\in \Delta\setminus \{0\}$ to be
   		\begin{equation}\label{eq:D2-om-f-diag}
   		\begin{split}
   		\D^2_{(r,r,r)}(\omega\cdot f_{s,\mu}) = q_\mu'(r)\begin{bmatrix}
   		-2 & 1 & 1\\
   		1 & -2 & 1 \\
   		1 & 1 & -2
   		\end{bmatrix},
   		\end{split}
   		\end{equation}
   		so for any $\vv\in \R^3$ we have
   		\begin{equation}\label{eq:D2-expr-1}
   		\D^2_{(r,r,r)}(\omega\cdot f_{s,\mu}) \cdot (
   		\vv,\vv) = -2 q_\mu'(r)\left(\norm{\vv}^2 - \langle \vv, \sigma(\vv)\rangle \right) = -3q_\mu'(r)\norm{\vv_\perp}^2,
   		\end{equation}  	
   		 where $\sigma$ is the cyclic permutation $\sigma(x,y,z) = (y,z,x)$ and the notation $\vv = \vv_\parallel +  \vv_{\perp} \in \Delta \oplus \Delta^\perp = \R^3$ is defined preceding \eqref{eq:dth-def}.
   		 Writing $\x = \x_\parallel + \x_\perp$ and using $\x_\parallel = \frac{x+y+z}{3}\1$, equations \eqref{eq:om-f-0}, \eqref{eq:D-om-f-0}, and \eqref{eq:D2-expr-1} together with Taylor's theorem imply that, for all $\x\in \R^3$,
   		\begin{equation}\label{eq:om-f-taylor}
   		\begin{split}
   		(\omega\cdot f_{s,\mu})(\x)&= \frac{1}{2} \D^2_{\x_\parallel}(\omega\cdot f_{s,\mu})\cdot(\x_\perp,\x_\perp)  + R_\mu(\x_{\parallel})\x_{\perp}^{\otimes 3} \\
   		&= - \frac{3}{2}q_\mu'\left(\frac{x+y+z}{3}\right)\norm{\x_{\perp}}^2 + R_\mu(\x_{\parallel})\x_{\perp}^{\otimes 3},
   		\end{split}
   		\end{equation}
   		where $(\x_\parallel,\mu)\mapsto R_\mu(\x_\parallel)$ is smooth on\footnote{We restrict attention to $\x_\parallel \in \Delta \setminus \{0\}$ since $q_\mu''$ is not differentiable at zero if $2<s<3$. This poses no problem for us since $K_\mu\cap \Delta^\perp = \varnothing$ for $\mu > 0$, i.e., $\x\in K_\mu$ implies $\x_\parallel \neq 0$.} $(\Delta \setminus \{0\})\times \R$.
   		Since the function $(\R^3 \setminus \{0\})\times \R \to \R$ given by $(\x,\mu)\mapsto \norm{R_\mu(\x_{\parallel})}\norm{\x_{\perp}}$ is continuous and since $\cc_{[\mu^*,\mu_1]}$ is disjoint from $(\Delta^\perp \times \R)$, for each $0 <\epsilon < 1$ the set $$U_{\epsilon}\coloneqq \{(\x,\mu)\in \cc_{[\mu^*,\mu_1]}  \colon \norm{R_\mu(\x_{\parallel})}\norm{\x_{\perp}} < \epsilon \}$$
   		is a relatively open neighborhood of $(\Delta\times [\mu^*,\mu_1]) \cap \cc_{[\mu^*,\mu_1]}$ in $\cc_{[\mu^*,\mu_1]}$.
   		Since $s> 2$ and $\mu > 0$, $-\frac{\sqrt{3}}{2}q_\mu'(r) = \frac{3}{2}\frac{s\mu r^{s-1}}{1+r^s} > 0$ is jointly continuous in $(r,\mu)$ and hence attains a minimum $m > 0$ on the compact set $\{(r,\mu)\colon (r,r,r,\mu)\in \cc_{ [\mu^*,\mu_1]}\}$.
   		Choose $\epsilon \leq \frac{m}{2}$.        
        Using \eqref{eq:om-f-taylor}, the fact that $U_\epsilon \subset \cc_{ [\mu^*,\mu_1]}$, and the fact that $\x\in K_\mu$ implies $\x_{\parallel}= \frac{x+y+z}{3}\1\in K_\mu$, it follows that for all $(\x,\mu)\in U_\epsilon \setminus (\Delta\times [\mu^*,\mu_1])$:
   		\begin{equation}
   		\begin{split}
   		(d\theta \cdot f_{s,\mu})(\x) &= \frac{1}{\sqrt{3}}\frac{(\omega\cdot f_{s,\mu})(\x)}{\norm{\x_{\perp}^2}}\\
   		&= -\frac{\sqrt{3}}{2}q'\left(\frac{x+y+z}{3}\right) + R(\x_{\parallel})\frac{\x_{\perp}^{\otimes 3}}{\norm{\x_\perp}^2}\\
   		& \geq -\frac{\sqrt{3}}{2}q'\left(\frac{x+y+z}{3}\right) - \norm{R(\x_{\parallel})}\norm{\x_\perp}\\
   		& \geq -\frac{\sqrt{3}}{2}q'\left(\frac{x+y+z}{3}\right) - \epsilon\\
   		&\geq \frac{m}{2}.
   		\end{split}
   		\end{equation}
   		Taking $U\coloneqq U_\epsilon$ and $\delta \coloneqq \frac{m}{2}$ completes the proof.
   	\end{proof}
   	
   	\begin{Prop}\label{prop:dth}
   		Fix $s> 2$ and $\mu^* > 0$.
   		Then for every $\mu_1 > \mu^*$,
   		there exists $\epsilon > 0$ such that, for all $\mu \in [\mu^*,\mu_1],$
   		$d\theta(f_{s,\mu}) \geq \epsilon$ on $ K_\mu\setminus \Delta$.
    \end{Prop}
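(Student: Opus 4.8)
The plan is a routine compactness argument that patches together the two preceding lemmas: Lemma~\ref{lem:dth-lem2} controls $d\theta(f_{s,\mu})$ near the diagonal, and Lemma~\ref{lem:dth-lem1} together with continuity controls it on the complement.

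First I would fix $\mu_1 > \mu^*$ and recall that $\cc_{[\mu^*,\mu_1]}$ is compact: since $\mu \geq \mu^* > 0$ forces $K_\mu \subset \{0 \leq x,y,z \leq \mu\} \subset \{0 \leq x,y,z \leq \mu_1\}$, the set $\cc_{[\mu^*,\mu_1]}$ is a closed subset of the compact set $\{0 \leq x,y,z \leq \mu_1\} \times [\mu^*,\mu_1]$. Next I would apply Lemma~\ref{lem:dth-lem2} to obtain $\delta > 0$ and a relatively open neighborhood $U \subset \cc_{[\mu^*,\mu_1]}$ of $(\Delta \times [\mu^*,\mu_1]) \cap \cc_{[\mu^*,\mu_1]}$ such that $d\theta(f_{s,\mu}(\x)) \geq \delta$ for all $(\x,\mu) \in U \setminus (\Delta \times [\mu^*,\mu_1])$.

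Then I would set $C \coloneqq \cc_{[\mu^*,\mu_1]} \setminus U$. Since $U$ is relatively open in the compact set $\cc_{[\mu^*,\mu_1]}$, the set $C$ is compact; and since $U$ contains $(\Delta \times [\mu^*,\mu_1]) \cap \cc_{[\mu^*,\mu_1]} = \cc_{[\mu^*,\mu_1]} \cap (\Delta \times \R)$, the set $C$ is disjoint from $\Delta \times \R$. Hence the function $(\x,\mu) \mapsto d\theta(f_{s,\mu}(\x))$, which is continuous on $(\R^3 \setminus \Delta) \times \R$, is defined on $C$, and by Lemma~\ref{lem:dth-lem1} (using $K_\mu \subset \R^3_+$) it is strictly positive there; being continuous on the compact set $C$, it attains a minimum $m > 0$ on $C$.

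Finally I would take $\epsilon \coloneqq \min\{\delta, m\} > 0$. For any $\mu \in [\mu^*,\mu_1]$ and $\x \in K_\mu \setminus \Delta$, the point $(\x,\mu)$ lies in $\cc_{[\mu^*,\mu_1]} \setminus (\Delta \times \R)$, so it lies either in $U$, where $d\theta(f_{s,\mu}(\x)) \geq \delta \geq \epsilon$, or in $C$, where $d\theta(f_{s,\mu}(\x)) \geq m \geq \epsilon$; in either case $d\theta(f_{s,\mu}(\x)) \geq \epsilon$, which is the claim. I do not anticipate any genuine obstacle here; the only point requiring a little care is checking that $C$ is truly disjoint from $\Delta \times \R$, so that Lemma~\ref{lem:dth-lem1} applies and $d\theta(f_{s,\mu})$ is well-defined and continuous on $C$.
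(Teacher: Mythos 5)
Your proposal is correct and follows essentially the same route as the paper's proof: use Lemma~\ref{lem:dth-lem2} to get a uniform bound $\delta$ on a relatively open neighborhood $U$ of the diagonal in the compact set $\cc_{[\mu^*,\mu_1]}$, then apply Lemma~\ref{lem:dth-lem1} plus continuity on the compact complement $\cc_{[\mu^*,\mu_1]}\setminus U$ to extract a positive minimum, and take $\epsilon$ to be the smaller of the two. The only difference is that you spell out the compactness of $\cc_{[\mu^*,\mu_1]}$ and the disjointness of the complement from $\Delta\times\R$, which the paper leaves implicit.
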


    \begin{proof}
    	By Lemma \ref{lem:dth-lem2}, there exists $\delta_1 > 0$ and a relatively open neighborhood $U\subset \cc_{[\mu^*,\mu_1]}$ of $(\Delta\times [\mu^*,\mu_1]) \cap \cc_{[\mu^*,\mu_1]}$ in $\cc_{[\mu^*,\mu_1]}$ such that, for all $(\x,\mu)\in U\setminus (\Delta\times [\mu^*,\mu_1])$,  $$d\theta( f_{s,\mu}(\x)) \geq \delta_1.$$
    	Here $\cc$ and $\cc_{[\mu^*,\mu_1]}$ are as defined preceding Lemma \ref{lem:dth-lem2}.
    	By Lemma \ref{lem:dth-lem1}, $d\theta(f_{s,\mu}(\x))> 0$ for all $(\x,\mu)$ in the compact set  $\cc_{[\mu^*,\mu_1]} \setminus U$ and therefore attains a minimum $\delta_2$ on this set.
    	Defining $\epsilon \coloneqq \min\{\delta_1,\delta_2\}$, it follows that $$\forall (\x,\mu)\in \cc_{[\mu^*,\mu_1]}\setminus(\Delta \times [\mu^*,\mu_1])\colon d\theta(f_{s,\mu}(\x)) \geq \epsilon.$$ 
    	From the definition of $\cc_{[\mu^*,\mu_1]}$ we can write $\cc_{[\mu^*,\mu_1]} = \{(\x,\mu)\in \R^3\times \R\colon \mu^*\leq \mu \leq \mu_1 \text{ and } \x \in K_{\mu} \}$, so it follows that $d\theta(f_{s,\mu}(\x))\geq \epsilon$ whenever $\mu\in [\mu^*,\mu_1]$ and $\x\in K_\mu \setminus \Delta$.
    	This completes the proof.
    \end{proof}
    
    \subsection{The Sprott system: existence of periodic orbits}\label{sec:sprott}
    In this section we apply our theory to prove existence of periodic orbits for the Sprott system discussed in \S \ref{sec:intro}.
    As far as we know, this is the first time that the existence of nonstationary periodic orbits has been proven rigorously for this system.
    The equations are given on $\R^3$ by 
	\begin{equation}\label{eq:sprott}
	\begin{split}
	\dot{x} &= y^2 - z - \mu x\\
	\dot{y} &= z^2 - x - \mu y\\
	\dot{z} &= x^2 - y - \mu z,
	\end{split}
	\end{equation}
	and depend on the parameter $\mu\in \R$.
    We note that, unlike the repressilator \eqref{eq:repressilator}, the Sprott system is not a monotone cyclic feedback system \cite{mallet1990poincare}. 	
	Some trajectory segments of the dynamics for $\mu = 0$ are shown in Figure \ref{fig:sprott-traj} and and for other values of $\mu$ in Figure \ref{fig:ball-traj-mu-various}.
	The sphere shown is defined in \S \ref{sec:compact-container}.
	In the sequel, we let $f_\mu$ denote the vector field defined by \eqref{eq:sprott}.
	
	At the end of \S\ref{sec:sprott-existence} we will prove that \eqref{eq:sprott} has a periodic orbit for all $\mu \in (-0.25,0.5)$.
	Just like for the repressilator, the proof will amount to showing that \eqref{eq:sprott} satisfies the hypotheses of Theorem \ref{th:hopf}.
	In the intervening sections we will construct the ingredients required to do this. 
	First, in \S \ref{sec:compact-container} we find a certain compact set $K_\mu$ which contains all bounded trajectories of \eqref{eq:sprott}; we will define the set $\cc$ of Theorem \ref{th:hopf} in terms of $K_\mu$.
	Unlike the sets $K_\mu$ defined for the repressilator, in this section $K_\mu$ is not a trapping region and is not even invariant; this illustrates the flexibility allowed by the hypotheses of Theorem \ref{th:hopf}.
	 \S \ref{sec:rotation} consists of deriving estimates involving $d\theta(f_\mu)$ (where $d\theta$ is defined in \S \ref{sec:dtheta}) used to establish hypothesis \ref{item:th-hopf-3} of Theorem \ref{th:hopf}.
	 In \S \ref{sec:equilibria} we determine the equilibria and associated eigenvalues of $\D f_\mu$. 
	 In \S  \ref{sec:sprott-hopf} we show that \eqref{eq:sprott} exhibits Hopf bifurcations, needed in particular to verify hypothesis \ref{item:th-hopf-5} of Theorem \ref{th:hopf}.
	 Finally, \S \ref{sec:sprott-existence} combines these ingredients to prove the periodic orbit existence theorem.

	\begin{figure}
		\centering
		\includegraphics[width=0.49\linewidth]{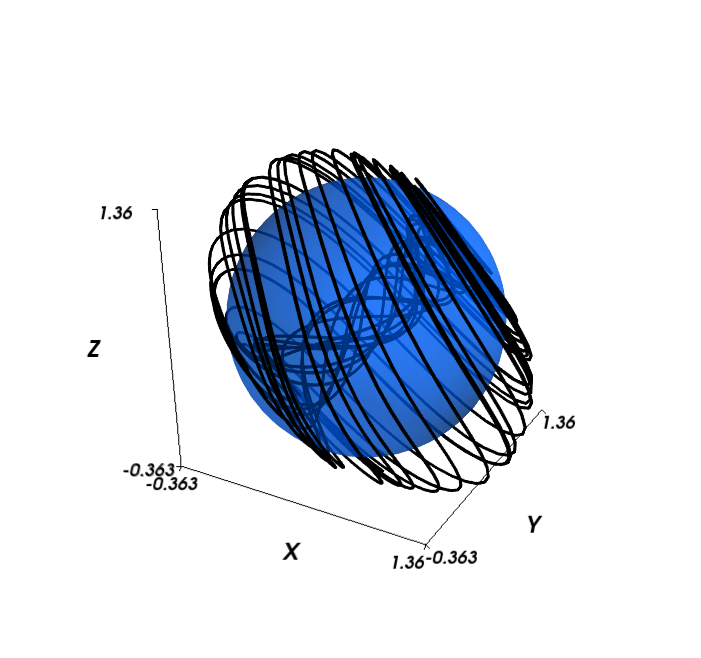}
		
		\includegraphics[width=0.49\linewidth]{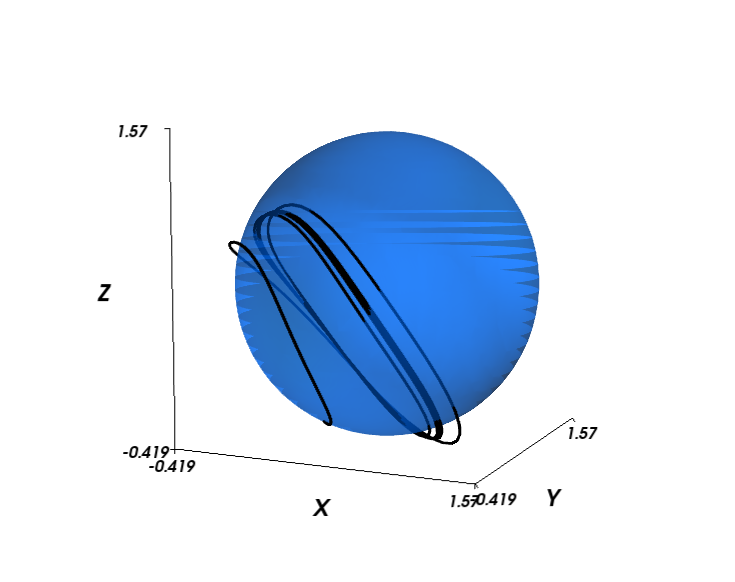} \includegraphics[width=0.49\linewidth]{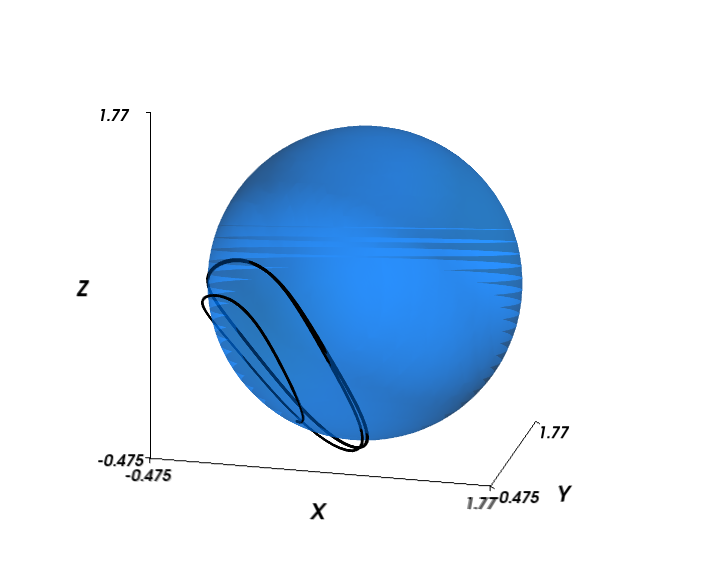}
		\caption{Shown here are trajectory segments of \eqref{eq:sprott} with $\mu = 0$ (top), $\mu = 0.15$ (bottom left), and $\mu = 0.3$ (bottom right).  
			All initial conditions are $(x_0,y_0,z_0) = (0.3, 0.2, -0.3)$. Also shown is the sphere $\dot{V}^{-1}(0)$ (c.f. Equation \eqref{eq:iso-func-def} and Figure \ref{fig:ball-planes}).}\label{fig:ball-traj-mu-various}	
	\end{figure}

	\subsubsection{A compact set containing all bounded trajectories}\label{sec:compact-container}
	Define the function $V\colon \R^3 \to \R$ via $V(\x) \coloneqq x + y + z$.
	A computation shows that the Lie derivative $\dot{V}$ of $V$ is
	\begin{equation}\label{eq:iso-func-def}
	\dot{V}(x,y,z) =  \norm{\x}^2-(\mu+1)(x+y+z) = \langle \textbf{1}, f_\mu(\x) \rangle.
	\end{equation}
	For any $c \geq -\frac{3}{4}(\mu + 1)^2$, the sublevel set $B_{\mu,c}\coloneqq \dot{V}^{-1}(-\infty,c]$ is the closed ball of radius $\frac{\sqrt{3(\mu+1)^2+4c}}{2}$ centered at $(\frac{1+\mu}{2})\textbf{1}$.
	In particular, the zero sublevel set of $\dot{V}$ is centered at the midpoint of two equilibria on the diagonal (the origin and $(1+\mu) \1$), with the two equilibria being antipodal points on the bounding sphere.
	Furthermore, the planes $V^{-1}(0)$ and $V^{-1}(3(1+\mu))$ are tangent to the sphere at these antipodal points.
	See Figure \ref{fig:ball-planes}.
	
	\begin{figure}
		\centering
		\includegraphics[width=0.5\linewidth]{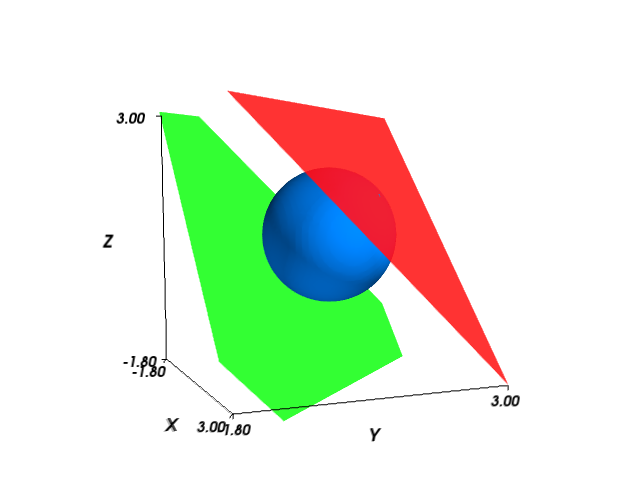}
		\caption{Shown here is the spherical level set $\dot{V}^{-1}(0)$ (teal), and the two planes $V^{-1}(0)$ (green) and $V^{-1}(3(1+\mu))$ (red) for $\mu = 0.4$.}\label{fig:ball-planes}	
	\end{figure}

	This geometry implies that the subsets $V^{-1}(-\infty,0)$ and $V^{-1}(3(1+\mu),\infty)$ are respectively negatively and positively invariant for $\mu \geq -1$.
	Furthermore, trajectories in these regions tend to $\infty$ in negative and positive time, respectively.
	It follows that any bounded trajectory must be contained in $V^{-1}[0,3(1+\mu)]$ when $\mu \geq -1$.
	We will further refine these considerations to produce a certain \emph{compact} set containing all bounded trajectories.
	
	Define translated coordinates $\textbf{x}_\mu\coloneqq (x_\mu,y_\mu,z_\mu)\coloneqq \x - \frac{(1+\mu)}{2}\textbf{1}$ and define $r_\mu\coloneqq \norm{\x_\mu}$.
	
	\begin{Th}\label{th:compact-container}
		For $\mu > -1$, every bounded trajectory is contained in the compact set $K_\mu$ defined by
		\begin{equation}\label{eq:Kmu-def}
		\begin{split}
		K_\mu \coloneqq& \bigg\{\x\in \R^3\colon V(\x)\geq r_\mu - \frac{3^{\frac{3}{4}}+3^{\frac{1}{4}}}{2}(1+\mu)\arctan\left(\frac{2 r_\mu}{(3^{1/4})(1+\mu)}\right)- \frac{\sqrt{3}}{2}(1+\mu)\\
		& + \frac{3^{\frac{3}{4}}+3^{\frac{1}{4}}}{2}(1+\mu)\arctan\left(3^{1/4}\right)\bigg\} \cap V^{-1}[0,3(1+\mu)],
		\end{split}	
		\end{equation}
		and $\dot{V}^{-1}(-\infty, 0]\subset K_\mu$.
		For $\mu = -1$, the only bounded trajectory is the equilibrium at the origin; we define $K_{-1}\coloneqq \{\mathbf{0}\}$.		
	\end{Th}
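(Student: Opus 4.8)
The plan is to use the function $V(\x) = x+y+z$ together with a carefully designed scalar comparison argument along trajectories. The key observation is that, along a solution $t\mapsto \x(t)$, we have $\dot V = \dot V(\x) = \norm{\x}^2 - (\mu+1)V(\x)$, and it is convenient to pass to the translated coordinates $\x_\mu = \x - \tfrac{1+\mu}{2}\1$ and $r_\mu = \norm{\x_\mu}$, in which $\dot V = r_\mu^2 - \tfrac{3}{4}(1+\mu)^2$; in particular $\dot V \geq 0$ precisely outside the ball $\dot V^{-1}(-\infty,0]$. Since any bounded trajectory must (by the invariance of $V^{-1}(-\infty,0)$ and $V^{-1}(3(1+\mu),\infty)$ in backward/forward time, already established in the text preceding the theorem) stay inside the slab $V^{-1}[0,3(1+\mu)]$, it remains only to show that a bounded trajectory also satisfies the first inequality defining $K_\mu$. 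I would do this by bounding how fast $V$ can decrease relative to how fast $r_\mu$ can change.

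First I would estimate $\dot r_\mu$. Writing $\dot r_\mu = \langle \x_\mu, \dot\x\rangle / r_\mu = \langle \x_\mu, f_\mu(\x)\rangle/r_\mu$ and expanding $f_\mu$ in the translated coordinates, the quadratic part of $f_\mu$ contributes a term bounded in norm by a constant times $r_\mu^2$ (in fact one computes the quadratic form $\x \mapsto (y^2,z^2,x^2)$ has a definite bound $\norm{(y^2,z^2,x^2)} \le \norm{\x}^2$, and similarly after translating), while the linear and constant parts contribute at most linear growth in $r_\mu$. Hence there is an inequality of the form $|\dot r_\mu| \le r_\mu^2/\sqrt{3} + (\text{affine in }r_\mu)$ — the precise constants are what produce the $3^{3/4}, 3^{1/4}$ and $\arctan$ expressions in \eqref{eq:Kmu-def}. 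Pairing this with $\dot V = r_\mu^2 - \tfrac34(1+\mu)^2 \ge -\tfrac34(1+\mu)^2$ and, more importantly, $\dot V \ge r_\mu^2 - \tfrac34(1+\mu)^2$, I would form the quantity
\[
G(\x) \coloneqq V(\x) - \Psi(r_\mu),
\]
where $\Psi$ is the explicit antiderivative appearing inside the braces in \eqref{eq:Kmu-def}, chosen so that $\Psi'(r) = \dfrac{d}{dr}\!\left[r - \tfrac{3^{3/4}+3^{1/4}}{2}(1+\mu)\arctan\!\big(\tfrac{2r}{3^{1/4}(1+\mu)}\big)\right]$. The point of this choice is that, along any trajectory, $\dot G = \dot V - \Psi'(r_\mu)\dot r_\mu$, and the bounds on $\dot V$ from below and on $\dot r_\mu$ in terms of $r_\mu^2$ should combine so that $\dot G \ge 0$ identically (the $\arctan$ term is precisely engineered so that $\Psi'(r_\mu)\cdot(\text{upper bound for }\dot r_\mu) \le r_\mu^2 - \tfrac34(1+\mu)^2$ everywhere, via a rational-function inequality in $r_\mu$). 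Consequently $G$ is nondecreasing along trajectories.

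Given that $G$ is nondecreasing, a bounded trajectory has $G$ bounded above, but a nondecreasing bounded function need not be eventually constant, so instead I would argue as follows: if a bounded trajectory ever had $G < 0$ at some time, then since $G$ is nondecreasing it stays $< 0$ in forward time; but combined with the slab constraint $V \in [0,3(1+\mu)]$ this forces $r_\mu$ to stay in a bounded range, and then a quantitative version (using that $\dot V = r_\mu^2 - \tfrac34(1+\mu)^2$ is bounded below by a strictly positive constant once $r_\mu$ is bounded away from $\tfrac{\sqrt3}{2}(1+\mu)$ from above, while the strict monotonicity $\dot G > 0$ on the relevant region) yields a contradiction with boundedness of $V$. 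Cleaner: I expect the right route is to show $G(\x) \ge 0$ is forward-invariant AND that the complementary region $\{G<0\}$ contains no bounded forward orbit because $\dot G$ is bounded below by a positive constant there — so every bounded trajectory lies in $\{G \ge 0\} \cap V^{-1}[0,3(1+\mu)] = K_\mu$. Finally, the inclusion $\dot V^{-1}(-\infty,0] \subset K_\mu$ is a direct check: on that ball $r_\mu \le \tfrac{\sqrt3}{2}(1+\mu)$, and one verifies the defining inequality of $K_\mu$ holds there using $V \ge 0$ on the ball (which is itself immediate since the ball is tangent to $V^{-1}(0)$) together with $\Psi(r_\mu) \le 0$ for $r_\mu$ in that range — the last constant term $\tfrac{3^{3/4}+3^{1/4}}{2}(1+\mu)\arctan(3^{1/4})$ in \eqref{eq:Kmu-def} is exactly $\Psi$ evaluated so that $\Psi(\tfrac{\sqrt3}{2}(1+\mu)) = 0$. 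The case $\mu=-1$ is trivial since then $\dot V = \norm{\x}^2 \ge 0$ with equality only at the origin, so $V$ is strictly increasing off the origin and no nonconstant trajectory is bounded.

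\textbf{Main obstacle.} The heart of the argument — and the step most likely to require genuine care — is verifying the differential inequality $\dot G \ge 0$ (or $>0$ off $\dot V^{-1}(-\infty,0]$). This reduces to a single-variable inequality of the form $\Psi'(r)\bigl(\tfrac{r^2}{\sqrt3} + a r + b\bigr) \le r^2 - \tfrac34(1+\mu)^2$ for all relevant $r$, where $a,b$ depend on $\mu$; one must show the $\arctan$-derived factor $\Psi'(r) = 1 - \dfrac{(3^{3/4}+3^{1/4})(1+\mu)}{3^{1/4}(1+\mu) + \tfrac{4r^2}{3^{1/4}(1+\mu)}}$ (simplified) is small enough near the origin of $r_\mu$ (where the RHS is negative) and tends to $1$ fast enough for large $r_\mu$. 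Getting the quadratic upper bound on $\dot r_\mu$ with the sharp constant $1/\sqrt3$ — which comes from $\norm{(y^2,z^2,x^2)}\le\norm{(x,y,z)}^2$ being saturated only on the coordinate axes, away from which one gets a better constant — and tracking how translating by $\tfrac{1+\mu}{2}\1$ perturbs it, is the computational crux; everything else is bookkeeping with the explicit antiderivative and the already-established slab invariance.
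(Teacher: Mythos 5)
Your skeleton --- pair $\dot V = r_\mu^2 - \tfrac34(1+\mu)^2$ with an upper bound on $\dot r_\mu$ and fold both into the monotone quantity $G = V - \Psi(r_\mu)$, with $\Psi$ the arctan antiderivative normalized at $r_\mu = \tfrac{\sqrt3}{2}(1+\mu)$ --- is essentially the paper's argument recast (the paper runs the same comparison by writing $dr_\mu/dV_\mu \le (r_\mu^2 + \tfrac{\sqrt3}{4}(1+\mu)^2)/(r_\mu^2 - \tfrac34(1+\mu)^2)$, integrating the separable comparison ODE, and invoking the comparison lemma in backward time). But the step you yourself flag as the crux is both unverified and set up with the wrong ingredients. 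The constants in \eqref{eq:Kmu-def} correspond to $\Psi'(r) = (r^2 - \tfrac34(1+\mu)^2)/(r^2 + \tfrac{\sqrt3}{4}(1+\mu)^2)$, i.e.\ to a bound on $\dot r_\mu$ with coefficient $1$ on $r_\mu^2$ (Cauchy--Schwarz gives $x_\mu y_\mu^2 + y_\mu z_\mu^2 + z_\mu x_\mu^2 \le r_\mu^3$), not $1/\sqrt3$; your justification of a $1/\sqrt3$ coefficient via saturation of $\norm{(y^2,z^2,x^2)}\le\norm{\x}^2$ is not a proof, and the $\mu$-dependent cross term $\mu(x_\mu y_\mu + y_\mu z_\mu + z_\mu x_\mu - r_\mu^2)$ is quadratic, not affine, in $r_\mu$, so ``affine lower-order terms'' does not cover it. Moreover $\dot G \ge 0$ cannot hold ``identically'': inside the ball $\dot V^{-1}(-\infty,0]$ one has $\Psi'(r_\mu)<0$, so multiplying an upper bound for $\dot r_\mu$ by $\Psi'$ reverses the inequality, and the estimate for the term $-\tfrac14(1+\mu)^2 V_\mu/r_\mu$ itself uses $V\in[0,3(1+\mu)]$ and $r_\mu\ge\tfrac{\sqrt3}{2}(1+\mu)$. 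The inequality is available only on the slab minus the ball, and verifying it there with the specific $\Psi$ of the statement is the entire analytic content of the theorem; the proposal does not do it.

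The endgame also fails as written. Monotonicity of $G$ propagates $G<c$ backward in time, not forward (your ``stays $<0$ in forward time'' reverses the direction), and the repaired route --- ``$\{G<0\}$ contains no bounded forward orbit because $\dot G$ is bounded below by a positive constant there'' --- is false: inside the slab, the complement of $K_\mu$ accumulates at the origin (points of $V^{-1}(0)$ just outside the ball near $\mathbf 0$ violate the defining inequality, whose right-hand side is only second order in $r_\mu - \tfrac{\sqrt3}{2}(1+\mu)$), and the origin is an equilibrium lying on $\dot V^{-1}(0)\cap V^{-1}(0)$, so $\dot G\to 0$ there. The missing step, which the paper supplies, is: for a slab point outside $K_\mu$, monotonicity keeps its entire backward orbit outside $K_\mu$; one must then rule out that this backward orbit approaches the ball --- if $r_\mu(t_n)\to\tfrac{\sqrt3}{2}(1+\mu)$, then $V\ge 0$ on the slab forces $G(\x(t_n))$ up to the threshold value, contradicting $G<c$ (the paper does this by passing to the limit in the integrated comparison inequality, using $\dot V^{-1}(0)\subset\{V\ge0\}$) --- and only then is $\dot V$ (not $\dot G$) uniformly positive along the backward orbit, so $V$ exits $[0,3(1+\mu)]$ in finite backward time and the trajectory is unbounded. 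Finally, your ``direct check'' of $\dot V^{-1}(-\infty,0]\subset K_\mu$ is wrong: the right-hand side of \eqref{eq:Kmu-def} is not $\le 0$ on the ball (at the center it equals $\tfrac{1+\mu}{2}(3^{3/4}+3^{1/4})\arctan(3^{1/4})-\tfrac{\sqrt3}{2}(1+\mu)\approx 0.79(1+\mu)>0$), so one needs the paper's convexity argument (convexity of the right-hand side in $r_\mu$, linearity of $V$, checking the center and the boundary sphere along rays) or an equivalent; likewise the $\mu=-1$ one-liner needs a limit-set argument, since a strictly increasing bounded $V$ does not by itself force unboundedness.
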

	For a visual depiction of $K_\mu$, see Figure \ref{fig:Kmu}.
	Note that the ball $\dot{V}^{-1}(-\infty,0]$ bounded by the sphere $\dot{V}^{-1}(0)$ shown in Figure \ref{fig:ball-planes} is contained in $K_\mu$. 

	\begin{figure}
		\centering
		\includegraphics[width=0.5\linewidth]{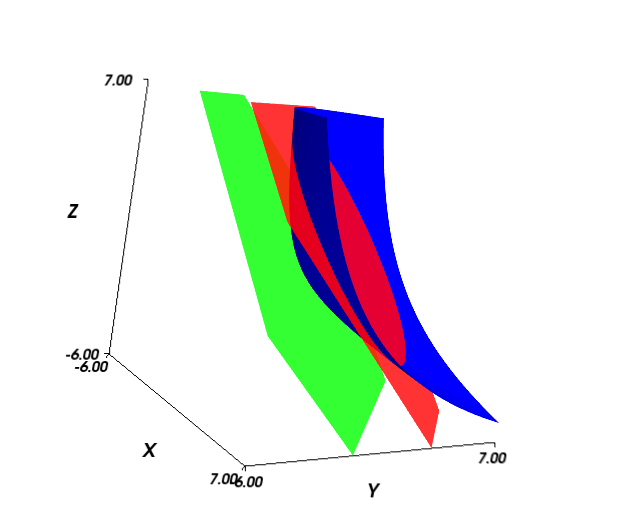}
		\caption{The compact set $K_\mu$ of Theorem \ref{th:compact-container} is the region bounded by the blue surface, red plane $V^{-1}(3(1+\mu))$, and green plane $V^{-1}(0)$.
			Note that the ball $\dot{V}^{-1}(-\infty,0]$ with boundary $\dot{V}^{-1}(0)$ shown in Figure \ref{fig:ball-planes} is contained in $K_\mu$. 
			This figure was generated using $\mu = 0.4$.}\label{fig:Kmu}	
	\end{figure}

	\begin{proof}
		For the case that $\mu = -1$, positive invariance of $V^{-1}(0,\infty)$, negative invariance of $V^{-1}(-\infty,0)$, and the fact that $\dot{V}^{-1}(0) = \{\mathbf{0}\}$ implies that the equilibrium at the origin is the only bounded trajectory of $f_{-1}$.
		For the remainder of the proof, we consider the case $\mu > -1$.
		
		$K_{\mu}$ is compact since it is clearly closed and bounded.
		We now show that 
		\begin{equation}\label{eq:ball-contained-Kmu}
		\dot{V}^{-1}(-\infty,0] = \left\{r_\mu \leq \frac{\sqrt{3}}{2}(1+\mu)\right\}\subset K_\mu.
		\end{equation}
		Note that (i) the midpoint $\frac{(1+\mu)}{2}\1$ of the ball $\dot{V}^{-1}(-\infty,0] = \{r_\mu \leq \frac{\sqrt{3}}{2}(1+\mu)\}$ belongs to $K_\mu$ since when $r_\mu = 0$ 
		the right side of the inequality in \eqref{eq:Kmu-def} is equal to $$ \frac{(1+\mu)}{2}(3^{\frac{3}{4}}+3^{\frac{1}{4}}-\sqrt{3})\leq \frac{3(1+\mu)}{2} = V\left(\frac{(1+\mu)}{2}\1 \right),$$ and (ii) $\partial( \dot{V}^{-1}(-\infty,0]) = \dot{V}^{-1}(0) \subset K_\mu$ since both $K_\mu, \dot{V}^{-1}(0)\subset V^{-1}[0,3(1+\mu)]$ and the right side of the inequality in \eqref{eq:Kmu-def} vanishes when $r_\mu = \frac{\sqrt{3}}{2}(1+\mu)$. 
		Since (a) $\dot{V}^{-1}(-\infty,0]$ is convex, (b) $V$ is linear, and (c) the right side of the inequality in \eqref{eq:Kmu-def} is a convex function with respect to $r_{\mu}$ (its second derivative with respect to $r_{\mu} > 0$ is positive everywhere), by considering the inequality in \eqref{eq:Kmu-def} on rays emanating from $\frac{(1+\mu)}{2}\1$ and using (i, ii) it follows that $\dot{V}^{-1}(-\infty,0] \subset K_\mu$ as desired.

		Next, let $t\mapsto \x(t)$ be a trajectory of \eqref{eq:sprott}.
		If $V(\textbf{x}(0)) \not \in [0,3(1+\mu)]$, then $\norm{\textbf{x}(t)} \to \infty$ in either positive or negative time, so every bounded trajectory is contained in $V^{-1}[0,3(1+\mu)]$.
		Hence it suffices to restrict our attention to trajectories satisfying $\textbf{x}(0)\in V^{-1}[0,3(1+\mu)]$.
		Since any trajectory in $V^{-1}(-\infty,0)$ tends to $\infty$ in negative time, to prove the theorem it suffices to show that, for all $\textbf{x}(0)\in V^{-1}[0,3(1+\mu)]$, if $\textbf{x}(0) \not \in K_\mu$ then there exists a time $t_f < 0$ such that $V(\textbf{x}(t_f)) < 0$.
		
		Define the shifted function $V_\mu\coloneqq V - \frac{3(1+\mu)}{2} = x_\mu + y_\mu + z_\mu$.
		We compute 
		\begin{align*}
		\dot{r}_\mu &= \frac{\textbf{x}_\mu\cdot f_\mu(\textbf{x}_\mu + \frac{(1+\mu)}{2}\textbf{1})}{r_\mu}\\
		&= \frac{(x_\mu y_\mu^2 + y_\mu z_\mu^2 + z_\mu x_\mu^2) + \mu(x_\mu y_\mu + y_\mu z_\mu+ z_\mu x_\mu) - \mu r_\mu^2 - \frac{1}{4}(1+\mu)^2V_\mu}{r_\mu}.
		\end{align*}
		The Cauchy-Schwarz inequality and subadditivity of $\sqrt{\cdot}$ applied to the first and second numerator terms yields
		\begin{equation}
		\dot{r}_\mu \leq \frac{r_\mu^3 - \frac{1}{4}(1+\mu)^2V_\mu}{r_\mu}.
		\end{equation}
		Additionally, we have
		\begin{equation}\label{eq:lem-dot-V-eqn}
		\dot{V}_\mu = \dot{V} = r_\mu^2 - \frac{3}{4}(1+\mu)^2. 
		\end{equation}
		
		Consider now a trajectory $\textbf{x}_\mu(t) = \x(t) - \frac{(1+\mu)}{2}\textbf{1}$ with initial condition $\textbf{x}_\mu(0) \in \{r_\mu^2 > \frac{3}{4}(1+\mu)^2\}$. 
		$V_\mu$ increases monotonically along $\textbf{x}_\mu(t)$ as long as $\textbf{x}_\mu(t) \in \{r_\mu > \frac{\sqrt{3}}{2}(1+\mu)\}$, so time can be written as a function $t(V_\mu)$ of $V_\mu$, and we may therefore parametrize $r_\mu$ as a function of $V_\mu$.
		Using the chain rule, we compute
		\begin{align*}
		\frac{dr_\mu}{dV_\mu} &= \frac{\dot{r}_\mu}{\dot{V}_\mu} \\&\leq \frac{r_\mu^2 - \frac{1}{4}(1+\mu)^2\frac{V_\mu}{r_\mu}}{r_\mu^2 - \frac{3}{4}(1+\mu)^2}.
		\end{align*}
		
		We now further restrict our attention to a trajectory segment satisfying $0 \leq V \leq 3(1+\mu)$, or $-\frac{3}{2}(1+\mu) \leq V_\mu \leq \frac{3}{2}(1+\mu)$.
		We continue to assume that $r_\mu > \frac{\sqrt{3}}{2}(1+\mu)$ along this trajectory segment.
		It follows that 
		\begin{equation}\label{eq:drdv-ineq}
		\frac{dr_\mu}{dV_\mu} \leq \frac{r_\mu^2 + \frac{\sqrt{3}}{4}(1+\mu)^2}{r_\mu^2 - \frac{3}{4}(1+\mu)^2}.
		\end{equation}
		Let $\tilde{r}_\mu(V_\mu)$ denote a solution to the ODE defined by replacing the inequality in \eqref{eq:drdv-ineq} with equality.
		This ODE is separable and admits the implicit solution family
		\begin{equation}\label{eq:Vmu-tilde-rmu-soln}
		c + V_\mu = \tilde{r}_\mu - \frac{3^{\frac{3}{4}}+3^{\frac{1}{4}}}{2}(1+\mu)\arctan\left(\frac{2 \tilde{r}_\mu}{(3^{1/4})(1+\mu)}\right),
		\end{equation}
		where $c$ is an arbitrary constant of integration.
		Considering \eqref{eq:Vmu-tilde-rmu-soln} for different values $V_{\mu,0}\coloneqq V_\mu(\x(0))$ and $V_{\mu,t_f}\coloneqq V_\mu(\x(t_f))$ and subtracting the resulting two equations, we obtain
		\begin{equation}\label{eq:V-difference-lemma}
		\begin{split}
		V_\mu(\textbf{x}(0))-V_\mu(\textbf{x}(t_f)) = \tilde{r}_\mu(V_{\mu,0}) - \frac{3^{\frac{3}{4}}+3^{\frac{1}{4}}}{2}(1+\mu)\arctan\left(\frac{2 \tilde{r}_\mu(V_{\mu,0})}{(3^{1/4})(1+\mu)}\right)- \tilde{r}_\mu(V_{\mu,t_f})\\
		+ \frac{3^{\frac{3}{4}}+3^{\frac{1}{4}}}{2}(1+\mu)\arctan\left(\frac{2 \tilde{r}_\mu(V_{\mu,t_f})}{(3^{1/4})(1+\mu)}\right).
		\end{split}
		\end{equation}	
		Positivity of the right-hand side of \eqref{eq:drdv-ineq} implies that $V_\mu \mapsto \tilde{r}_\mu(V_\mu)$ is strictly increasing,
		which in turn implies that the right-hand side of \eqref{eq:Vmu-tilde-rmu-soln} is a strictly increasing function of $\tilde{r}_\mu$ for $\tilde{r}_\mu \geq \frac{\sqrt{3}}{2}(1+\mu)$.
		If we assume that $\tilde{r}_\mu(V_{\mu,0}) = r_\mu(0)$ (viewing $r_\mu$ as a function of $t$) and stipulate that $t_f \leq 0$, then the comparison lemma \cite[Sec.~2.7]{arnold1973ordinary} and \eqref{eq:drdv-ineq} imply that $r(t_f) \geq \tilde{r}_\mu(V_{\mu,t_f})$,
		so it follows from \eqref{eq:V-difference-lemma} and the preceding sentence that
		\begin{equation}\label{eq:V0-2nd-last-big-ineq}
		\begin{split}
		V(\textbf{x}(0))-V(\textbf{x}(t_f)) &\geq r_\mu(0) - \frac{3^{\frac{3}{4}}+3^{\frac{1}{4}}}{2}(1+\mu)\arctan\left(\frac{2 r_\mu(0)}{(3^{1/4})(1+\mu)}\right)- r_\mu(t_f)\\
		& + \frac{3^{\frac{3}{4}}+3^{\frac{1}{4}}}{2}(1+\mu)\arctan\left(\frac{2 r_\mu(t_f)}{(3^{1/4})(1+\mu)}\right),
		\end{split}
		\end{equation}
		where we have used the fact that $V(\textbf{x}(t_f))-V(\textbf{x}(0)) = V_\mu(\textbf{x}(t_f))-V_\mu(\textbf{x}(0))$.
		
		Assume that $\inf_{t_f\leq 0} r_\mu(t_f) \leq \frac{\sqrt{3}}{2}(1+\mu)$.
		Then there exists a (possibly bounded) decreasing subsequence $(t_n)_{n\in \N}\subset (-\infty,0)$ of negative values of $t_f$ with $r_\mu(t_n) > \frac{\sqrt{3}}{2}(1+\mu)$ and   $\lim_{n\to\infty} r_\mu(t_n) = \frac{\sqrt{3}}{2}(1+\mu)$.
        Since $\{r_\mu = \frac{\sqrt{3}}{2}(1+\mu)\}\subset \{V\geq 0\}$ it follows that $\lim_{n\to\infty} V(\x(t_n)) \geq 0$, so substituting $t_f = t_n$ in both sides of \eqref{eq:V0-2nd-last-big-ineq} and taking the limit $n\to \infty$ yields
		\begin{equation}\label{eq:V0-final-big-inequality}
		\begin{split}
		V(\textbf{x}(0)) &\geq r_\mu(0) - \frac{3^{\frac{3}{4}}+3^{\frac{1}{4}}}{2}(1+\mu)\arctan\left(\frac{2 r_\mu(0)}{(3^{1/4})(1+\mu)}\right)- \frac{\sqrt{3}}{2}(1+\mu)\ldots\\
		&\ldots + \frac{3^{\frac{3}{4}}+3^{\frac{1}{4}}}{2}(1+\mu)\arctan\left(3^{1/4}\right).
		\end{split}
		\end{equation}
		By \eqref{eq:Kmu-def}, $K_{\mu}$ is precisely the set of points $\x(0)\in V^{-1}[0,3(1+\mu)]$ which satisfy \eqref{eq:V0-final-big-inequality}.	
		In summary, we have shown that a necessary condition for the closure $\cl(\x(-\infty,0])$ of the negative-time trajectory through $\x(0)\in V^{-1}[0,3(1+\mu)] \setminus \dot{V}^{-1}(-\infty,0]$ to intersect $\dot{V}^{-1}(-\infty,0]$ is that $\x(0)\in K_\mu$.
		This and \eqref{eq:ball-contained-Kmu} imply that the trajectory through any $\x(0)\in V^{-1}[0,3(1+\mu)]\setminus K_\mu$ is bounded away from $\dot{V}^{-1}(-\infty,0]$ uniformly for all negative time.
        For such an $\x(0)$, it follows that there exists $\epsilon > 0$ such that $\dot{V} < -\epsilon$ uniformly along the corresponding negative-time trajectory, and therefore $\x(t)$ must enter $V^{-1}(-\infty,0)$ in finite negative time.
		By the final sentence of the third paragraph of this proof, this completes the proof.		
	\end{proof}

	\subsubsection{Cylindrical coordinates and rotation of the flow}\label{sec:rotation}
	Define an orthogonal matrix $M$ via
	\begin{equation}
	M = \begin{bmatrix}\frac{\sqrt{6}}{6} & - \frac{\sqrt{2}}{2} & \frac{\sqrt{3}}{3}\\\frac{\sqrt{6}}{6} & \frac{\sqrt{2}}{2} & \frac{\sqrt{3}}{3}\\- \frac{\sqrt{6}}{3} & 0 & \frac{\sqrt{3}}{3}\end{bmatrix}
	\end{equation}
	and define coordinates $[u,v,w]^T\coloneqq M^{-1} [x,y,z]^T = M^T [x,y,z]^T  $.
	The $w$-axis corresponds to $\Delta$ in the original coordinates, and the $u$ and $v$ axes determine an orthonormal coordinate system for $\Delta^\perp$.

	We further define cylindrical coordinates $(\rho,\theta,w)$ via
	\begin{equation}
	\begin{split}
	u &= \rho \cos\theta\\
	v &= \rho \sin \theta.
	\end{split}
	\end{equation}
	Using the symbolic package SymPy, we obtain the equations of motion in these new coordinates in closed form:
	\begin{equation}\label{eq:cylind-eqns}
	\begin{split}
	\dot{\rho} &= \rho \left(-\sqrt{2} \rho \sin^{3}{\left (\theta \right )} +  \frac{\sqrt{6}}{3} \rho \cos^{3}{\left (\theta \right )} - \frac{\sqrt{6}}{2} \rho \cos{\left (\theta + \frac{\pi}{3} \right )} - \frac{1}{\sqrt{3}} w - \mu + \frac{1}{2}\right)\\
	\dot{\theta} &= \rho\left(\frac{\sqrt{6}}{3} \sin^{3}{\left (\theta \right )} - \frac{\sqrt{6}}{2} \sin{\left (\theta + \frac{\pi}{3} \right )} + \sqrt{2} \cos^{3}{\left (\theta \right )}\right) - w - \frac{\sqrt{3}}{2} \\
	\dot{w} &= \frac{1}{\sqrt{3}}(\rho^2 + w^2) - (\mu+1) w.
	\end{split}
	\end{equation}
	
	Now $$\dot{\theta} = \left(\frac{u dv - vdu}{u^2 + v^2}\right)(f_\mu),$$ and $(u,v,w)$ are orthogonal coordinates adapted to the splitting $\R^3 = \Delta^\perp \oplus \Delta$ with $(u,v)$ coordinates for $\Delta^\perp$ and with $w$ a coordinate for $\Delta$.
	It follows from the discussion in \S \ref{sec:dtheta} that $\dot{\theta} = d\theta(f_\mu)$, where $d\theta$ is defined in \S \ref{sec:dtheta}.
	Because the hypotheses of Theorem \ref{th:hopf} are stated in terms of a closed 1-form, we will write $d\theta(f_\mu)$ instead of $\dot{\theta}$ in the following results.	
	
	\begin{Lem}\label{lem:rot-rate}
		The following estimate holds on $\R^3\setminus \Delta$ or, equivalently, whenever $\rho > 0$:
		\begin{equation}
		-0.41\rho  - w - \frac{\sqrt{3}}{2} \leq d\theta(f_\mu) \leq  0.41 \rho - w - \frac{\sqrt{3}}{2}. 
		\end{equation}
	\end{Lem}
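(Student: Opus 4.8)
The plan is to read off $d\theta(f_\mu)$ directly from the third displayed line of \eqref{eq:cylind-eqns}, together with the remark preceding the lemma that $\dot\theta=d\theta(f_\mu)$; this gives
\[
d\theta(f_\mu)=\rho\,g(\theta)-w-\frac{\sqrt{3}}{2},\qquad
g(\theta)\coloneqq \frac{\sqrt{6}}{3}\sin^{3}\theta-\frac{\sqrt{6}}{2}\sin\!\left(\theta+\frac{\pi}{3}\right)+\sqrt{2}\cos^{3}\theta .
\]
Since $\rho>0$ exactly on $\R^3\setminus\Delta$, the asserted two-sided estimate is equivalent to the single scalar bound $|g(\theta)|\le 0.41$ for all $\theta$. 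So the entire content of the lemma reduces to bounding the one-variable function $g$.

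To bound $g$, I would expand $\sin(\theta+\pi/3)=\tfrac12\sin\theta+\tfrac{\sqrt3}{2}\cos\theta$ and apply the triple-angle identities $\sin^{3}\theta=\tfrac14(3\sin\theta-\sin3\theta)$ and $\cos^{3}\theta=\tfrac14(3\cos\theta+\cos3\theta)$. After collecting terms, the coefficients of $\sin\theta$ and of $\cos\theta$ cancel identically, leaving only the third harmonic:
\[
g(\theta)=\frac{\sqrt{2}}{4}\cos3\theta-\frac{\sqrt{6}}{12}\sin3\theta .
\]
The amplitude bound is then immediate: $|g(\theta)|\le \sqrt{\bigl(\tfrac{\sqrt2}{4}\bigr)^2+\bigl(\tfrac{\sqrt6}{12}\bigr)^2}=\sqrt{\tfrac18+\tfrac1{24}}=\sqrt{\tfrac16}=\tfrac1{\sqrt6}<0.41$. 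Consequently $-0.41\rho\le\rho\,g(\theta)\le 0.41\rho$ whenever $\rho>0$, and adding $-w-\tfrac{\sqrt3}{2}$ yields exactly the claimed inequality on $\R^3\setminus\Delta$.

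There is no serious obstacle here; the only care needed is the bookkeeping in the trigonometric reduction — in particular confirming that the first-harmonic terms genuinely cancel and that $\tfrac18+\tfrac1{24}=\tfrac16$, so that $\tfrac1{\sqrt6}=0.408\ldots$ sits safely below the stated constant $0.41$. Note this cancellation of the first harmonic is essential for the sharp constant: a naive triangle-inequality bound on the unsimplified expression for $g$ gives $\tfrac{\sqrt6}{3}+\tfrac{\sqrt6}{2}+\sqrt2>3$, far larger than $0.41$.
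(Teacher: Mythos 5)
Your proof is correct and takes essentially the same route as the paper: read $\dot{\theta}=d\theta(f_\mu)$ off the $\dot\theta$ equation in \eqref{eq:cylind-eqns} and bound the sinusoidal coefficient of $\rho$ by $0.41$. The only difference is that you supply the trigonometric reduction the paper leaves implicit (it merely asserts the amplitude bound), verifying that the first harmonics cancel and the amplitude is exactly $1/\sqrt{6}\approx 0.408<0.41$.
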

	\begin{proof}
		The sinusoidal function
		$\theta \mapsto \left(\frac{\sqrt{6}}{3} \sin^{3}{\left (\theta \right )} - \frac{\sqrt{6}}{2} \sin{\left (\theta + \frac{\pi}{3} \right )} + \sqrt{2} \cos^{3}{\left (\theta \right )}\right)$ has zero mean and amplitude smaller than $0.41$.
		The result now follows from \eqref{eq:cylind-eqns}.
	\end{proof}
	
	The following result concerns the rotation rate $\dot{\theta}=d\theta(f_\mu)$ on the compact set $K_\mu$ (defined in Theorem \ref{th:compact-container}) which contains all bounded trajectories.

	\begin{figure}
		\centering
		\includegraphics[width=0.5\linewidth]{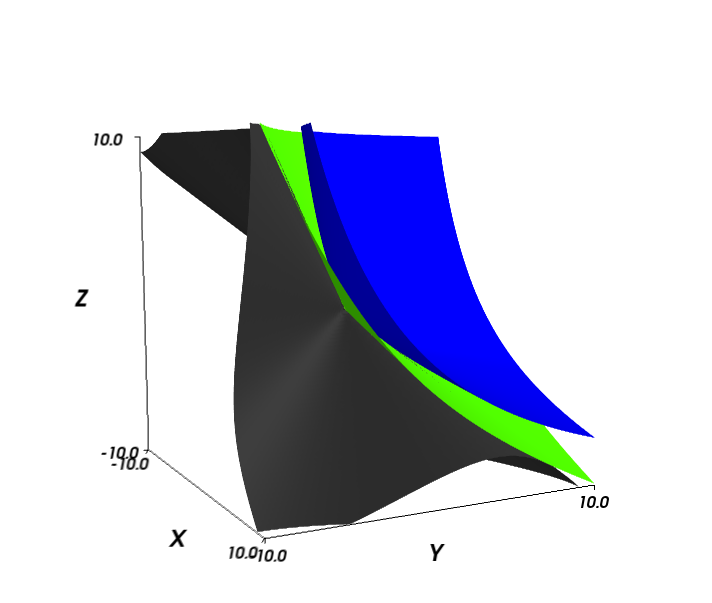}
		\caption{The blue surface is the same portion of the boundary of $K_\mu$ depicted in Figure \ref{fig:Kmu}.
			The dark surface is the region of space where $d\theta(f_\mu) = 0$ (not including $\Delta$), with $d\theta(f_\mu) < 0$ in the region of space containing the blue surface. The green surface is the boundary of the conservative inner approximation of the region of space where $d\theta(f_\mu) < 0$ obtained in Lemma \ref{lem:rot-rate}. This figure was generated using $\mu = 0.4$.}\label{fig:bad-rot-set-missed}	
	\end{figure}

	\begin{Th}\label{th:rot-rate-Kmu}
		There exists $\epsilon > 0$ such that, for all $-1\leq \mu \leq 0.55$, $d\theta(f_\mu) < -\epsilon$ on $K_\mu \setminus \Delta$.
	\end{Th}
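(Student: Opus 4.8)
The plan is to combine Lemma~\ref{lem:rot-rate} with the explicit formula \eqref{eq:Kmu-def} for $K_\mu$, using a scaling relation to reduce to a single $\mu$-independent one-variable estimate. The case $\mu=-1$ is vacuous since $K_{-1}=\{\mathbf 0\}\subset\Delta$, so $K_{-1}\setminus\Delta=\varnothing$; assume henceforth $\mu\in(-1,0.55]$. By Lemma~\ref{lem:rot-rate}, $d\theta(f_\mu)\leq 0.41\rho-w-\tfrac{\sqrt 3}{2}$ on $\R^3\setminus\Delta=\{\rho>0\}$, so it suffices to bound $\sup_{K_\mu\setminus\Delta}(0.41\rho-w)$ away from $\tfrac{\sqrt 3}{2}$, uniformly in $\mu$.

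First I would rewrite $K_\mu$ in the cylindrical coordinates $(\rho,\theta,w)$ of \S\ref{sec:rotation}, in which $w=\tfrac{1}{\sqrt 3}V(\x)$, $\rho=\operatorname{dist}(\x,\Delta)$, and --- since $\tfrac{1+\mu}{2}\1$ lies on $\Delta$ with $w$-coordinate $\tfrac{\sqrt 3}{2}(1+\mu)$ --- one has $r_\mu^2=\rho^2+\big(w-\tfrac{\sqrt 3}{2}(1+\mu)\big)^2$. The set $K_\mu$ is rotationally symmetric about $\Delta$, hence determined by its profile in the $(\rho,w)$ half-plane, and substituting $\rho=(1+\mu)\tilde\rho$, $w=(1+\mu)\tilde w$ into \eqref{eq:Kmu-def} makes every factor of $(1+\mu)$ cancel: the profile of $K_\mu$ equals $(1+\mu)\widetilde K$, where
\begin{equation*}
\widetilde K=\Big\{(\tilde\rho,\tilde w):\tilde\rho\geq 0,\ 0\leq\tilde w\leq\sqrt 3,\ \sqrt 3\,\tilde w\geq\tilde h(\tilde r)\Big\},\qquad \tilde r\coloneqq\sqrt{\tilde\rho^2+\big(\tilde w-\tfrac{\sqrt 3}{2}\big)^2},
\end{equation*}
and $\tilde h(r)\coloneqq r-\tfrac{3^{3/4}+3^{1/4}}{2}\arctan\!\big(\tfrac{2r}{3^{1/4}}\big)-\tfrac{\sqrt 3}{2}+\tfrac{3^{3/4}+3^{1/4}}{2}\arctan(3^{1/4})$ is $\mu$-independent; in particular $K_\mu\setminus\Delta$ corresponds to $\widetilde K\cap\{\tilde\rho>0\}$. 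Since $0.41\rho-w=(1+\mu)(0.41\tilde\rho-\tilde w)$ and $1+\mu\leq 1.55$, it remains to prove
\begin{equation*}
m\coloneqq\sup\big\{\,0.41\tilde\rho-\tilde w:(\tilde\rho,\tilde w)\in\widetilde K,\ \tilde\rho>0\,\big\}<\frac{\sqrt 3}{2\cdot 1.55},
\end{equation*}
the supremum being finite (as $\widetilde K$ is compact) and nonnegative.

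To bound $m$, I would use that $\tilde h'(r)=1-\frac{\sqrt 3+1}{\,1+\frac{4}{\sqrt 3}r^2\,}$, which vanishes exactly at $r=\tfrac{\sqrt 3}{2}$ and is positive beyond, so $\tilde h$ is strictly increasing on $[\tfrac{\sqrt 3}{2},\infty)$. Because $\tilde r\geq\tilde\rho$, any point with $\sqrt 3\,\tilde w<\tilde h(\tilde\rho)$ satisfies $\sqrt 3\,\tilde w<\tilde h(\tilde r)$ and hence lies outside $\widetilde K$. Writing $b\coloneqq\tfrac{\sqrt 3}{2\cdot 1.55}$, the half-plane $\{0.41\tilde\rho-\tilde w\geq b,\ \tilde w\geq 0\}$ is contained in $\{\tilde\rho\geq b/0.41\}$ with $b/0.41>\tfrac{\sqrt 3}{2}$, so by monotonicity of $\tilde h$ it is disjoint from $\widetilde K$ as soon as the one-variable function $\psi(\tilde w)\coloneqq\tilde h\big((b+\tilde w)/0.41\big)-\sqrt 3\,\tilde w$ is positive for all $\tilde w\geq 0$. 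I would verify this by noting that $\psi$ is convex (since $\tilde h'$ is increasing on the relevant range), that $\psi(0)>0$ by direct evaluation, and that $\psi(\tilde w)\to+\infty$ (because $1/0.41>\sqrt 3$), so positivity reduces to checking $\psi$ at its unique minimizer, where a short computation gives a definite positive lower bound; this margin upgrades the conclusion to $m<b$ strictly. (Equivalently, one may parametrize the lower boundary curve $\sqrt 3\,\tilde w=\tilde h(\tilde r)$ of $\widetilde K$ and maximize $0.41\tilde\rho-\tilde w$ along it directly; the maximum is well below $b$.) Taking $\epsilon\coloneqq\tfrac12\big(\tfrac{\sqrt 3}{2}-1.55\,m\big)>0$ then gives $d\theta(f_\mu)\leq 0.41\rho-w-\tfrac{\sqrt 3}{2}\leq 1.55\,m-\tfrac{\sqrt 3}{2}=-2\epsilon<-\epsilon$ on $K_\mu\setminus\Delta$ for every $\mu\in(-1,0.55]$.

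The only non-routine step is the final estimate $\psi>0$ (equivalently, controlling $\max_{\partial\widetilde K}(0.41\tilde\rho-\tilde w)$): it is an explicit but genuinely transcendental one-variable optimization, of the kind the paper handles by careful estimation or symbolic/numerical computation, and the ``room to spare'' there is exactly what makes the range $\mu\leq 0.55$---rather than only $\mu\leq 0$---accessible. Everything else---the reduction through Lemma~\ref{lem:rot-rate}, the scaling $K_\mu=(1+\mu)\widetilde K$, and the monotonicity of $\tilde h$---is bookkeeping with formulas already in hand.
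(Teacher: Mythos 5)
Your proposal is correct, but it organizes the argument differently from the paper. Both proofs start from Lemma~\ref{lem:rot-rate} and the lower bound on $V$ that defines $K_\mu$, and both terminate in a one-variable transcendental optimization with room to spare that produces the $0.55$ cutoff. The paper, however, proceeds by a case split ($\rho\le 2.1$, where $w\ge 0$ alone gives the bound, versus $\rho>2.1$, where $-w$ is estimated by a function of $\rho$ using $\rho\le r_\mu$ and monotonicity) and then maximizes the $\mu$-dependent bound $F_\mu(\rho)$ of \eqref{eq:dth-fmu-Fmu} over $\rho$, whose maximum turns out to be affine in $(1+\mu)$, yielding the explicit threshold $\mu<\tfrac{0.8599}{0.5532}-1$. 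You instead make explicit that the $(\rho,w)$-profile of $K_\mu$ is exactly $(1+\mu)\widetilde K$ for a $\mu$-independent compact set $\widetilde K$ (a scaling the paper exploits only implicitly through the affine-in-$(1+\mu)$ form of its final bound), reducing everything to the single $\mu$-free estimate $\sup_{\widetilde K}(0.41\tilde\rho-\tilde w)<\tfrac{\sqrt 3}{2\cdot 1.55}$, with $1+\mu\le 1.55$ invoked once at the end. Your reduction is conceptually cleaner and explains why the bound degrades linearly in $1+\mu$; the paper's route keeps the admissible range of $\mu$ explicit.

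Two small remarks. First, $\tilde h$ is monotone increasing only on $[\tfrac{\sqrt 3}{2},\infty)$ (it decreases below), so the blanket claim that $\sqrt 3\,\tilde w<\tilde h(\tilde\rho)$ forces a point outside $\widetilde K$ needs that restriction; your argument respects it, since you only use it where $\tilde\rho\ge b/0.41>\tfrac{\sqrt 3}{2}$ with $b=\tfrac{\sqrt 3}{2\cdot 1.55}$. Second, the one step you describe but do not execute does check out: $\psi$ attains its minimum near $\tilde w\approx 0.22$ with value roughly $0.08$, equivalently $\sup_{\widetilde K}(0.41\tilde\rho-\tilde w)\approx 0.48<0.559$, and this is of exactly the same nature as the decimal estimates in the paper's own proof, so it is not a gap. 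Note also that once the closed half-plane $\{0.41\tilde\rho-\tilde w\ge b\}$ is shown disjoint from $\widetilde K$, strictness $m<b$ already follows from compactness of $\widetilde K$ and attainment of the supremum, so the separate margin argument is not needed.
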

	\begin{proof}
		We have $K_{-1}\setminus \Delta = \varnothing$ since $K_{-1} = \{\mathbf{0}\}$, so the statement holds vacuously for $\mu = -1$.
		For the remainder of the proof we assume that $\mu >-1$.
		
		It follows from Lemma \ref{lem:rot-rate} that $d\theta(f_\mu) < -\epsilon$ whenever $\rho < \frac{1}{0.41}w +  \frac{\sqrt{3}}{0.82} - \frac{\epsilon}{0.41}\approx 2.439 w + 2.112 - 2.438 \epsilon$.
		Since $w\geq 0$ on $K_\mu$ and $\frac{\sqrt{3}}{0.82}> 2.1$ it follows that, for any sufficiently small $\epsilon > 0$, the $\rho, w$ coordinates of every point in $K_\mu$ satisfy either the preceding inequality or the inequality $\rho > 2.1$.
		Hence it suffices to find an $\epsilon > 0$ such that $d\theta(f_\mu) < -\epsilon$ whenever $\x \in K_\mu$, $\rho > 2.1$ and $\mu\in[-1,0.55]$.	
		
		We use the notation from the statement and proof of Theorem \ref{th:compact-container}.
		From the definition of $K_\mu$, we have
		\begin{equation}\label{eq:V-ineq-rot-rate}
		\begin{split}
		V&\geq r_\mu - \frac{3^{\frac{3}{4}}+3^{\frac{1}{4}}}{2}(1+\mu)\arctan\left(\frac{2 r_\mu}{(3^{1/4})(1+\mu)}\right)- \frac{\sqrt{3}}{2}(1+\mu) + \frac{3^{\frac{3}{4}}+3^{\frac{1}{4}}}{2}(1+\mu)\arctan\left(3^{1/4}\right)\\
		&\geq \rho - \frac{3^{\frac{3}{4}}+3^{\frac{1}{4}}}{2}(1+\mu)\arctan\left(\frac{2 \rho}{(3^{1/4})(1+\mu)}\right)- \frac{\sqrt{3}}{2}(1+\mu) + \frac{3^{\frac{3}{4}}+3^{\frac{1}{4}}}{2}(1+\mu)\arctan\left(3^{1/4}\right)\\
		&\geq \rho -1.8(1+\mu)\arctan\left(\frac{1.52 \rho}{1+\mu}\right) + 0.78(1+\mu)	
		\end{split}
		\end{equation}
		on $K_\mu$.
		The second inequality follows from the following three observations:  (i) we showed in the proof of Theorem \ref{th:compact-container} that the first two terms on the right side of \eqref{eq:V-ineq-rot-rate}  constitute an increasing function of $\rho$ if $\rho \geq \frac{\sqrt{3}}{2}(1+\mu)$, (ii) $2.1 > \frac{\sqrt{3}}{2}(1+\mu)$ if $\mu \leq 1.41$, and (iii) $\rho \leq r_\mu$ (the distance to the diagonal $\Delta$ is at most the distance to any individual point on $\Delta$).
		Since $V = \sqrt{3}w$, 
		it now follows that
		\begin{equation}
		\begin{split}
		-w &\leq -\frac{1}{\sqrt{3}}\left(\rho -1.8(1+\mu)\arctan\left(\frac{1.52 \rho}{1+\mu}\right) + 0.78(1+\mu)\right)\\
		&\leq -0.57 \rho + 1.04(1+\mu)\arctan\left(\frac{1.52 \rho}{1+\mu}\right) - 0.45(1+\mu).
		\end{split} 
		\end{equation}
		Substituting this into Lemma \ref{lem:rot-rate}, we find that, when $\x\in K_\mu$ and $\rho > 2.1$,
		\begin{equation}\label{eq:dth-fmu-Fmu}
		d\theta(f_\mu) \leq -0.15 \rho + 1.04(1+\mu)\arctan\left(\frac{1.52 \rho}{1+\mu}\right) - 0.45(1+\mu) - 0.86\eqqcolon F_{\mu}(\rho).
		\end{equation}
		We compute the derivative
		\begin{equation}
		F_\mu'(\rho) = -0.15 + \frac{(1.04)(1.52)}{1+\frac{(1.52)^2}{(1+\mu)^2}\rho^2}
		\end{equation}
		and see that $F_\mu'$ is positive for small $\rho$ and is strictly decreasing.
		Therefore, $F_\mu$ has a unique critical point $\rho^*(\mu)\in (0,\infty)$, $\rho^*(\mu)$ is a local maximum, and it satisfies
		\begin{equation*}
		2.0318(1+\mu)\leq \rho^*(\mu) = \frac{(1+\mu)}{1.52} \sqrt{-1 + \frac{(1.04)(1.52)}{0.15}} \leq 2.0319(1+\mu).
		\end{equation*}
		Substituting this expression into \eqref{eq:dth-fmu-Fmu} yields
		\begin{align*}
		F_\mu(\rho^*(\mu))&\leq  (1+\mu)[(-0.15)(2.0318)+1.04\arctan((1.52)(2.0319))-0.45] -0.86\\
		&\leq 0.5532(1+\mu) - 0.86,
		\end{align*}
		which is strictly less than $-\epsilon\coloneqq -0.0001$ whenever 
		\begin{equation*}
		\mu < \frac{0.8599}{0.5532}-1.
		\end{equation*}
		The quantity on the right is strictly larger than $0.55$, so 
		$$d\theta(f_\mu)\leq F_\mu(\rho^*(\mu)) <  - \epsilon$$ whenever $\x\in K_\mu$, $\rho > 2.1$, and $\mu \leq 0.55$.
		By the discussion in the second paragraph of the proof, this completes the proof.
	\end{proof}
	
	\begin{Co}\label{co:equilib-on-diag}
		For $\mu \in [-1,0.55]$, all equilibria of $f_\mu$ belong to the diagonal $\Delta$.
	\end{Co}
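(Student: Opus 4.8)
The plan is to deduce this immediately from the compact-container result Theorem~\ref{th:compact-container} together with the rotational estimate Theorem~\ref{th:rot-rate-Kmu}. First I would observe that any equilibrium $p$ of $f_\mu$ is a constant---hence bounded---trajectory of \eqref{eq:sprott}. Consequently Theorem~\ref{th:compact-container} forces $p\in K_\mu$ for every $\mu\in(-1,0.55]$, and for $\mu=-1$ it forces $p=\mathbf 0\in K_{-1}=\{\mathbf 0\}$. Since $\mathbf 0\in\Delta$, the case $\mu=-1$ is already finished, and I may assume $\mu\in(-1,0.55]$ from here on.

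Next I would argue by contradiction: suppose $p\notin\Delta$, so that $p\in K_\mu\setminus\Delta$. Because $p\notin\Delta$, the closed $1$-form $d\theta$ of \S\ref{sec:dtheta} is defined on a neighborhood of $p$, so the number $d\theta(f_\mu(p))$ is meaningful, and Theorem~\ref{th:rot-rate-Kmu} provides a constant $\epsilon>0$ (uniform in $\mu$) with $d\theta(f_\mu(p))<-\epsilon<0$. On the other hand, $p$ being an equilibrium means $f_\mu(p)=0$, so by linearity of the cotangent vector $d\theta|_p$ we get $d\theta(f_\mu(p))=0$. This contradicts $d\theta(f_\mu(p))<-\epsilon$, and hence $p\in\Delta$, which is the desired conclusion.

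I do not expect any genuine obstacle, as the statement is a direct corollary; the only subtleties worth flagging in the write-up are (i) confirming that $d\theta$ is honestly defined at the equilibrium under consideration---this is exactly what the contradiction hypothesis $p\notin\Delta$ supplies---and (ii) treating the degenerate endpoint $\mu=-1$ separately, which is handled above using $K_{-1}=\{\mathbf 0\}\subset\Delta$. Everything else is an immediate combination of the two preceding theorems.
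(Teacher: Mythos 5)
Your proposal is correct and follows essentially the same route as the paper: equilibria are bounded trajectories, hence lie in $K_\mu$ by Theorem~\ref{th:compact-container}, and the strict negativity of $d\theta(f_\mu)$ on $K_\mu\setminus\Delta$ from Theorem~\ref{th:rot-rate-Kmu} forces $f_\mu\neq 0$ there, so equilibria must lie on $\Delta$. Your explicit contradiction phrasing and separate treatment of $\mu=-1$ via $K_{-1}=\{\mathbf 0\}$ are just slightly more detailed versions of the paper's one-line argument.
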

	\begin{proof}
		For $\mu \in [-1,0.55]$, Theorem \ref{th:compact-container} implies that all equilibria lie in $K_\mu$, and Theorem \ref{th:rot-rate-Kmu} implies that $d\theta(f_\mu)< 0$ on $K_\mu \setminus \Delta$, so in particular $f_\mu \neq 0$ on $K_\mu \setminus \Delta$.
	\end{proof}
	
	\begin{Co}\label{co:sprott-per-orbits-contained-Kmu}
		For $\mu \in (-1,0.55]$, all periodic orbits of $f_\mu$ are contained in $K_\mu\setminus \Delta$, and the winding number $\frac{1}{2\pi}\int_\gamma d\theta$ of any nonstationary periodic orbit $\gamma$ around $\Delta$ satisfies $\frac{1}{2\pi}\int_\gamma d\theta \leq -1.$
		For the case $\mu = -1$, $f_{-1}$ has no nonstationary periodic orbits.
	\end{Co}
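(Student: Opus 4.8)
The plan is to assemble Theorems~\ref{th:compact-container} and~\ref{th:rot-rate-Kmu} with the $\sigma$-invariance of $\Delta$ established in \S\ref{sec:symmetry} and the integrality of the winding number. First I would dispose of the case $\mu=-1$: a nonstationary periodic orbit is in particular a bounded trajectory, and Theorem~\ref{th:compact-container} states that the only bounded trajectory of $f_{-1}$ is the equilibrium at the origin; hence $f_{-1}$ has no nonstationary periodic orbits.

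Next, fix $\mu\in(-1,0.55]$ and let $\gamma$ be a (nonstationary) periodic orbit with image $\Gamma$. Since $\Gamma$ is compact, $\gamma$ is a bounded trajectory, so Theorem~\ref{th:compact-container} gives $\Gamma\subset K_\mu$. I then claim $\Gamma\cap\Delta=\varnothing$. By \S\ref{sec:symmetry}, the diagonal $\Delta$ is invariant under the flow $\Phi_\mu$ of $f_\mu$, and $\Delta$ is a one-dimensional embedded submanifold diffeomorphic to $\R$; were $\Gamma$ to meet $\Delta$, invariance would force $\Gamma\subset\Delta$, which is impossible because a nonconstant periodic solution of a scalar ODE $\dot y=g(y)$ cannot exist (solutions are monotone). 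Hence $\Gamma\subset K_\mu\setminus\Delta$, which also establishes the first assertion of the corollary since the only periodic orbits that could fail to lie in $K_\mu\setminus\Delta$ are those meeting $\Delta$.

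It remains to bound the winding number, for which the two ingredients are: (i) $\frac{1}{2\pi}\int_\gamma d\theta\in\Z$, since on $\R^3\setminus\Delta$ the closed form $\frac{d\theta}{2\pi}$ is Poincar\'e dual to a codimension-one submanifold (\S\ref{sec:dtheta}), equivalently $\R^3\setminus\Delta$ deformation retracts onto a circle and $\frac{d\theta}{2\pi}$ generates $\Hdr^1$ with integral periods; and (ii) the integrand is strictly negative. For (ii), writing $\tau$ for the period of $\gamma$ and using $d\theta(f_\mu)=\dot\theta$ along trajectories together with Theorem~\ref{th:rot-rate-Kmu} (which supplies $\epsilon>0$ with $d\theta(f_\mu)<-\epsilon$ on $K_\mu\setminus\Delta\supseteq\Gamma$), we obtain
\[
\int_\gamma d\theta=\int_0^\tau d\theta\bigl(f_\mu(\gamma(t))\bigr)\,dt< -\epsilon\tau<0.
\]
A strictly negative integer is at most $-1$, so $\frac{1}{2\pi}\int_\gamma d\theta\le -1$, completing the proof.

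I do not expect a serious obstacle here, as this is a corollary of the preceding results; the only points requiring a moment of care are ruling out $\Gamma\cap\Delta\neq\varnothing$ via invariance of $\Delta$ and justifying that $\frac{1}{2\pi}\int_\gamma d\theta$ is an integer, both of which are standard.
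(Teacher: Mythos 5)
Your proposal is correct and follows essentially the same route as the paper: Theorem~\ref{th:compact-container} confines periodic orbits to $K_\mu$ (with $K_{-1}=\{\mathbf{0}\}$ disposing of $\mu=-1$), invariance of the one-dimensional diagonal $\Delta$ excludes nonstationary orbits from $\Delta$, and Theorem~\ref{th:rot-rate-Kmu} plus integrality of the winding number gives $\frac{1}{2\pi}\int_\gamma d\theta\le -1$. You merely spell out two points the paper leaves implicit (no nonconstant periodic solutions of a scalar ODE, and integral periods of $\frac{d\theta}{2\pi}$ on $\R^3\setminus\Delta$), both correctly.
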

\begin{proof}
	For $\mu \in [-1,0.55]$, Theorem \ref{th:compact-container} implies that all periodic orbits lie in $K_\mu$.
	Furthermore, nonstationary periodic orbits must lie in $K_\mu \setminus \Delta$ since $\Delta$ is a $1$-dimensional invariant manifold (\S \ref{sec:symmetry}) and thus cannot intersect nonstationary periodic orbits. 
	The condition $\frac{1}{2\pi}\int_\gamma d\theta \leq -1$ follows since $d\theta(f_\mu)< 0$ on $K_\mu \setminus \Delta$ by Theorem \ref{th:rot-rate-Kmu}.
	$f_{-1}$ has no nonstationary periodic orbits since $K_{-1}=\mathbf{0}$.
\end{proof}
	
	\subsubsection{Equilibria}\label{sec:equilibria}
	By \S \ref{sec:symmetry}, $\Delta$ is invariant and the dynamics restricted to $\Delta$ are given by
	\begin{equation}\label{eq:diag-dynamics}
	\dot{x} = x^2 - x - \mu x = x(x- 1 - \mu).
	\end{equation}
	
	\begin{Th}
		For all $\mu \in \R$, the vector field $f_\mu$ has the equilibria $\mathbf{0}$ and $(1+\mu)\mathbf{1}$.
		For $-1 \leq \mu \leq 0.55$, these are the only equilibria.
	\end{Th}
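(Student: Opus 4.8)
The plan is to verify the two claimed points are equilibria by direct substitution, and then to reduce the uniqueness statement on $[-1,0.55]$ to the already-established fact that on that parameter range all equilibria lie on the diagonal $\Delta$.

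First I would check that $\mathbf{0}$ and $(1+\mu)\mathbf{1}$ are equilibria for every $\mu\in\R$. Substituting $(x,y,z)=(0,0,0)$ into \eqref{eq:sprott} gives $\dot{x}=\dot{y}=\dot{z}=0$ immediately. Substituting $(x,y,z)=(1+\mu,1+\mu,1+\mu)$ gives, in the first component, $(1+\mu)^2-(1+\mu)-\mu(1+\mu)=(1+\mu)^2-(1+\mu)(1+\mu)=0$, and by the cyclic symmetry $(x,y,z)\mapsto(y,z,x)$ of $f_\mu$ (see \S\ref{sec:symmetry}) the other two components vanish as well; alternatively one simply repeats the identical computation. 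Hence $(1+\mu)\mathbf{1}$ is an equilibrium for all $\mu$.

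Next I would prove uniqueness on $-1\le\mu\le 0.55$. By Corollary \ref{co:equilib-on-diag}, for $\mu\in[-1,0.55]$ every equilibrium of $f_\mu$ lies on the diagonal $\Delta=\mathrm{span}\{\1\}$. By \S\ref{sec:equilibria}, $\Delta$ is invariant and the restricted dynamics are governed by \eqref{eq:diag-dynamics}, namely $\dot{x}=x(x-1-\mu)$. An equilibrium on $\Delta$ therefore corresponds to a zero of $x\mapsto x(x-1-\mu)$, i.e.\ $x=0$ or $x=1+\mu$; these are precisely the points $\mathbf{0}$ and $(1+\mu)\mathbf{1}$. This exhausts all equilibria for $\mu\in[-1,0.55]$, completing the proof.

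The ``hard part'' here is not in this theorem at all: all of the work has been pushed into Corollary \ref{co:equilib-on-diag}, which in turn rests on the rotation-rate estimate of Theorem \ref{th:rot-rate-Kmu} (via Theorem \ref{th:compact-container} confining equilibria to $K_\mu$ and the sign of $d\theta(f_\mu)$ ruling out equilibria off $\Delta$). Given those results, the present statement is a two-line reduction, and the only thing to be careful about is that the parameter restriction $-1\le\mu\le0.55$ is exactly the range on which Corollary \ref{co:equilib-on-diag} applies; outside that range one only gets the two explicit equilibria without a uniqueness claim, which is all the theorem asserts.
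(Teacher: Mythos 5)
Your proposal is correct and follows essentially the same route as the paper: the paper verifies the two equilibria via the invariant diagonal dynamics \eqref{eq:diag-dynamics} (your direct substitution is an equivalent check) and deduces uniqueness on $[-1,0.55]$ from Corollary \ref{co:equilib-on-diag}, exactly as you do. Your closing remark correctly identifies that all the substance lies in Corollary \ref{co:equilib-on-diag} and the results feeding into it, not in this theorem itself.
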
 
	\begin{proof}
		The first statement follows directly from \eqref{eq:diag-dynamics}. The second statement follows from Corollary \ref{co:equilib-on-diag}.
	\end{proof}
	
	We compute 
	\begin{equation}
	\D_{\mathbf{0}} f_\mu = \begin{bmatrix}
	-\mu & 0 & -1 \\
	-1 & -\mu & 0\\
	0 & -1 & -\mu
	\end{bmatrix}
	\end{equation}
	and
	\begin{equation}
	\D_{(1+\mu)\1} f_\mu = \begin{bmatrix}
	-\mu & 2(1+\mu) & -1 \\
	-1 & -\mu & 2(1+\mu)\\
	2(1+\mu) & -1 & -\mu
	\end{bmatrix}.
	\end{equation}
	A symbolic eigenvalue computation using SymPy shows that
	\begin{equation}\label{eq:spec-0}
	\textnormal{spec}(\D_{\mathbf{0}} f_\mu) = \left\{-\mu + \frac{1}{2} \pm i \frac{\sqrt{3}}{2}, -1 - \mu \right\}
	\end{equation}
	and 
	\begin{equation}\label{eq:spec-1-plus-mu}
	\textnormal{spec}(\D_{(1+\mu)\1} f_\mu) = \left\{-2 \mu - \frac{1}{2} \pm i \frac{\sqrt{3}}{2}(2\mu + 3), 1 + \mu \right\}.
	\end{equation}
	The quantity $(2\mu + 3)$ is nonzero except when $\mu = -\frac{3}{2}$.
	It follows in particular that $\D f_\mu$ evaluated at both of these equilibria is always invertible \textit{except} when $\mu = -1$, which is the value of $\mu$ at which these equilibria coalesce.
	Additionally, the eigenvalues $\pm(1+\mu)$ for the two zeros both correspond to the eigenvector $\1$.
	
	\subsubsection{Two Hopf bifurcations}\label{sec:sprott-hopf}
	Given an equilibrium $\textbf{x}$ for $f_\mu$ at a given value of $\mu$, define the matrix $A\coloneqq \D_{\x} f_\mu$ and the $(1,2)$ tensor $B\coloneqq \D^2_{\x}f_\mu$.
	Since $f_\mu$ is a quadratic vector field, all of its third partial derivatives vanish, and therefore the first Lyapunov coefficient $\ell_1(0)$ at an equilibrium $(\x,\mu)$ having a single pair of purely imaginary eigenvalues is given by \cite[Eq.~5.39]{kuznetsov2013elements}:
	\begin{equation}
	\ell_1(0) = \frac{1}{2 \omega_0} \textnormal{Re} \left[\left \langle p, B(\bar{q}, (2i\omega_0I_n - A)^{-1}B(q,q)) -2B(q,A^{-1}B(q,\bar{q}))\right \rangle \right],
	\end{equation}
	where $\pm i\omega_0$ with $\omega_0 > 0$ are the imaginary eigenvalues of $A$ and $p,q\in \C^n$ satisfy $A q = i\omega_0 q$, $A^T p = -i\omega_0p$, and $\langle p, q\rangle \coloneqq \bar{p} \cdot q = 1$.
	We numerically compute $\ell_1(0) \approx -0.808$ for the equilibrium $\textbf{0}$ at $\mu = 0.5$, and $\ell_1(0) \approx 0.514$ for the equilibrium $(1+\mu)\1$ at $\mu = -0.25$.\footnote{Note that the value of $\ell_1(0)$ depends on the normalization of the eigenvectors $q$ and $p$, but $\textnormal{sign}(\ell_1(0))$ (which is the only thing that matters for the Hopf bifurcation theorem \cite[pp.~97--98]{kuznetsov2013elements}) is invariant under scaling  of $q$, $p$ obeying the condition $\langle p, q \rangle = 1$ \cite[p.~98]{kuznetsov2013elements}.}
	Additionally, we see from \eqref{eq:spec-0} and \eqref{eq:spec-1-plus-mu} that the derivatives with respect to $\mu$ of the real part of the complex eigenvalues is negative for the origin at $\mu = 0.5$ and also negative for $(1+\mu)\1$ at $\mu = -0.25$. 
	From \cite[Thm~3.3]{kuznetsov2013elements} and the final displayed equation of \cite[p.~97]{kuznetsov2013elements}, a subcritical (resp. supercritical) Hopf bifurcation occurs when $\textnormal{sign}(\ell_1(0))$ is the same as (resp. different from) that of the derivative with respect to $\mu$ of the real part of the complex eigenvalues at the critical value of $\mu$.
	Therefore:	
	\begin{Th}\label{th:hopf-sprott}
		The equilibrium $\mathbf{0}$ undergoes a subcritical generic Hopf bifurcation at $\mu = 0.5$, and the equilibrium $(1+\mu)\mathbf{1}$ undergoes a supercritical generic Hopf bifurcation at $\mu = -0.25$.
		The first bifurcation produces an exponentially stable limit cycle near $\mathbf{0}$ for $0 < 0.5-\mu \ll 1$, and the second bifurcation produces an exponentially unstable limit cycle near $(1+\mu)\mathbf{1}$ for $0 < \mu - (-0.25) \ll 1$.
	\end{Th}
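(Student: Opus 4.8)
The plan is to verify that, at the two indicated parameter values, each equilibrium satisfies all hypotheses of the standard Hopf bifurcation theorem (as in \cite[Thm~3.4.2]{guckenheimer1983nonlinear}, \cite[Thm~3.3]{kuznetsov2013elements}) and then to read off the criticality, the stability, and the parameter side from the Poincar\'e normal form. Almost all ingredients are already in place: the spectra \eqref{eq:spec-0} and \eqref{eq:spec-1-plus-mu}, the transversality data recorded just before the statement, and the numerically evaluated first Lyapunov coefficients $\ell_1(0)\approx -0.808$ for $\mathbf{0}$ at $\mu=0.5$ and $\ell_1(0)\approx 0.514$ for $(1+\mu)\1$ at $\mu=-0.25$; also $f_\mu$ is polynomial, hence $C^\infty$, so smoothness is a non-issue.

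First I would check the eigenvalue condition. By \eqref{eq:spec-0}, at $\mu=\tfrac12$ the matrix $\D_{\mathbf{0}}f_\mu$ has spectrum $\{\pm i\tfrac{\sqrt3}{2},\,-\tfrac32\}$, and by \eqref{eq:spec-1-plus-mu}, at $\mu=-\tfrac14$ the matrix $\D_{(1+\mu)\1}f_\mu$ has spectrum $\{\pm i\tfrac{5\sqrt3}{4},\,\tfrac34\}$; in each case there is a single algebraically simple conjugate pair of purely imaginary eigenvalues $\pm i\omega_0$ with $\omega_0>0$ and no other eigenvalue on the imaginary axis, so a two-dimensional center manifold exists. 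Transversality follows by differentiating the real parts in \eqref{eq:spec-0} and \eqref{eq:spec-1-plus-mu}: $\tfrac{d}{d\mu}(-\mu+\tfrac12)=-1\ne 0$ and $\tfrac{d}{d\mu}(-2\mu-\tfrac12)=-2\ne 0$. Since both Lyapunov coefficients are nonzero, both bifurcations are generic; and because $\textnormal{sign}\,\ell_1(0)$ agrees with $\textnormal{sign}\big(\tfrac{d}{d\mu}\textnormal{Re}\,\lambda\big)$ for $\mathbf{0}$ (both negative) but disagrees for $(1+\mu)\1$ ($\ell_1(0)>0$, derivative $<0$), the criterion recalled just before the statement classifies the first bifurcation as subcritical and the second as supercritical.

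To get the stability and the side of the parameter, I would pass to the center manifold and its normal form $\dot r=\alpha(\mu)r+\ell_1(0)r^3+O(r^5)$, with $\alpha(\mu)=\textnormal{Re}\,\lambda(\mu)$ equal to $-\mu+\tfrac12$ and $-2\mu-\tfrac12$ respectively. The bifurcating cycle has $r_*^2=-\alpha(\mu)/\ell_1(0)$, which is positive exactly when $\alpha(\mu)$ and $\ell_1(0)$ have opposite signs: for $\mathbf{0}$ (with $\ell_1(0)<0$) this means $\alpha(\mu)>0$, i.e.\ $\mu<\tfrac12$, i.e.\ $0<0.5-\mu\ll1$; for $(1+\mu)\1$ (with $\ell_1(0)>0$) it means $\alpha(\mu)<0$, i.e.\ $\mu>-\tfrac14$, i.e.\ $0<\mu+0.25\ll1$. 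The radial exponent of the cycle is $\alpha(\mu)+3\ell_1(0)r_*^2=-2\alpha(\mu)$, which is negative for $\mathbf{0}$ (there $\alpha(\mu)>0$) and positive for $(1+\mu)\1$ (there $\alpha(\mu)<0$); combining this with the remaining real eigenvalue, $-1-\mu<0$ near $\mu=\tfrac12$ and $1+\mu>0$ near $\mu=-\tfrac14$, shows that the cycle near $\mathbf{0}$ is a hyperbolic attractor and the cycle near $(1+\mu)\1$ is a hyperbolic repeller, as claimed.

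The only delicate point is that $\ell_1(0)$ is evaluated numerically from \cite[Eq.~5.39]{kuznetsov2013elements}, so one must be certain the computed \emph{sign} is correct. This is harmless here: both magnitudes ($\approx 0.8$ and $\approx 0.5$) are well away from $0$, and since $f_\mu$ is quadratic its third derivatives vanish, so $\ell_1(0)$ is an explicit rational expression in the known entries of $A=\D_{\x}f_\mu$ and $B=\D^2_{\x}f_\mu$; evaluating that expression with exact (or rigorously interval-bounded) arithmetic confirms the signs and makes the argument fully rigorous. Everything else is an immediate consequence of the displayed eigenvalue formulas.
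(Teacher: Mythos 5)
Your proposal is correct and follows essentially the same route as the paper: it verifies the simple imaginary pair and transversality from \eqref{eq:spec-0} and \eqref{eq:spec-1-plus-mu}, uses the numerically computed first Lyapunov coefficients with the sign criterion from \cite[Thm~3.3]{kuznetsov2013elements} to classify the bifurcations, and reads off the side of the parameter and the cycle's stability from the standard normal form together with the transverse real eigenvalue. Your added remark about confirming the sign of $\ell_1(0)$ by exact or interval arithmetic is a sensible strengthening of the same argument, not a different one.
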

	
	\subsubsection{Existence of periodic orbits}\label{sec:sprott-existence}
	
    We now put together the preceding results to obtain a periodic orbit existence result for the Sprott vector field \eqref{eq:sprott}.
    To do this, we show that the restriction $f|_{(-\infty,0.5)}$ satisfies the hypotheses of Theorem \ref{th:hopf} after a (nonlinear) parameter rescaling.
	
   	\begin{Th}\label{th:sprott}
   		Let $f_{\mu}$ be the Sprott vector field \eqref{eq:sprott} and let $K_\mu$ be defined as in Theorem \ref{th:compact-container}.
   		For all $\mu \in (-0.25,0.5)$, $f_{\mu}$ has a periodic orbit contained in $K_\mu$.
   	\end{Th}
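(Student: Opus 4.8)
The strategy is to verify the hypotheses of Theorem~\ref{th:hopf} for the Sprott vector field, using the supercritical Hopf bifurcation at $(x_c,\mu_c) = ((1+\mu)\1|_{\mu=-0.25}, -0.25) = ((0.75,0.75,0.75),-0.25)$ established in Theorem~\ref{th:hopf-sprott}. We take $N\coloneqq\{\x\colon x=y \text{ and } z\leq x\}\times \R$, $M\coloneqq \Delta\times\R = \partial N$, and $\cf\coloneqq \pi_1^*(\tfrac{d\theta}{2\pi})$ where $\pi_1$ is projection onto the $\R^3$ factor, so that $\cf$ is Poincar\'e dual to $N\setminus M$ in $(\R^3\times\R)\setminus M$ (as in the repressilator proof). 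For the ``container'' set we take $\cc\coloneqq\{(\x,\mu)\colon \mu^*\leq \mu \text{ (suitably)}\text{ and }\x\in K_\mu\}$ with $K_\mu$ as in Theorem~\ref{th:compact-container}. Because Theorem~\ref{th:hopf} is stated for an interval unbounded to one side of $\mu^*$ while we only want the finite interval $(-0.25,0.5)$, the first move is a (nonlinear, orientation-preserving) reparametrization of $\mu$ mapping, say, $(-1,0.55)$ to $(-1,\infty)$ and fixing a neighborhood of $[-0.25,0.5]$; this is why the theorem statement mentions a parameter rescaling. After this reparametrization the relevant parameter interval becomes a half-line and the constraint ``$\mu\leq 0.55$'' in Theorems~\ref{th:rot-rate-Kmu}, \ref{th:compact-container} and Corollaries~\ref{co:equilib-on-diag}--\ref{co:sprott-per-orbits-contained-Kmu} is exactly what lets us work on the whole (rescaled) half-line $(-1,\infty)$.

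Now I would verify the seven hypotheses of Theorem~\ref{th:hopf} one by one, choosing $\mu^*$ slightly above $-1$ (where the only bounded trajectory is the origin by Theorem~\ref{th:compact-container}, so $f_{\mu^*}$ has no periodic orbits in $\cc_{\{\mu^*\}}$, giving \eqref{item:th-hopf-1}); hypothesis \eqref{item:th-hopf-2} (no periodic orbits meet $(\partial\cc)_{[\mu^*,\infty)}$) requires care since, unlike the repressilator, $K_\mu$ is \emph{not} a trapping region---but Theorem~\ref{th:compact-container} says every \emph{bounded} trajectory lies in $K_\mu$, and a periodic orbit is bounded, so no periodic orbit can touch $\partial K_\mu$ at all (its image would have to lie in $\interior K_\mu$, since otherwise the full orbit, being bounded, forces it inside $K_\mu$; one argues that a point of the orbit on $\partial K_\mu$ contradicts either the trajectory being bounded or the defining inequality of $K_\mu$). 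Hypothesis \eqref{item:th-hopf-3} (the rotational lower bound $\iota_\mu^*\cf(f_\mu)\geq\epsilon$) is supplied directly by Theorem~\ref{th:rot-rate-Kmu}: $d\theta(f_\mu)<-\epsilon$ on $K_\mu\setminus\Delta$ for $\mu\in[-1,0.55]$, so $|d\theta(f_\mu)|\geq\epsilon$ there, and dividing by $2\pi$ handles $\cf$ (the sign issue is irrelevant since Theorem~\ref{th:hopf} only needs a strictly positive lower bound after possibly replacing $\cf$ by $-\cf$, equivalently reversing the orientation of $N$). Hypothesis \eqref{item:th-hopf-4} (compactness of $\cc_{[\mu^*,\mu]}$) follows because each $K_\mu$ is compact and $K_\mu$ varies continuously, so $\cc_{[\mu^*,\mu]}$ is a closed subset of a compact product. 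Hypothesis \eqref{item:th-hopf-5} is Theorem~\ref{th:hopf-sprott} (supercritical generic Hopf at $(0.75\cdot\1,-0.25)$, and $f$ is $C^\infty$ hence $C^3$) together with the statement that there are no other generalized centers in $\cc_{[\mu^*,\infty)}$---here I would use Corollary~\ref{co:equilib-on-diag} (all equilibria lie on $\Delta$ for $\mu\in[-1,0.55]$) and the explicit spectra \eqref{eq:spec-0}, \eqref{eq:spec-1-plus-mu} to check that the origin's complex pair is purely imaginary only at $\mu=0.5$ and $(1+\mu)\1$'s only at $\mu=-0.25$, so within the relevant (rescaled) range $(-0.25,0.5)$ the only generalized center is the Hopf point itself, and we should also choose $\mu^*$ and the rescaling so that $\cc_{[\mu^*,\infty)}$ (rescaled) contains no generalized center other than $(x_c,\mu_c)$; this needs the range restriction $\mu<0.5$, which is why the conclusion is limited to $(-0.25,0.5)$ rather than $(-0.25,0.55)$. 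Hypothesis \eqref{item:th-hopf-6} (no nonstationary periodic orbits meet $(\cc\cap M)_{[\mu^*,\infty)}$) holds because $M=\Delta\times\R$ and $\Delta$ is a $1$-dimensional invariant manifold by the symmetry discussion in \S\ref{sec:symmetry}, so it cannot contain a nonstationary periodic orbit. Hypothesis \eqref{item:th-hopf-7} (transversality of the center subspace and $M$ at the bifurcation point) follows from Equation~\eqref{eq:Ec-eq-Delta-perp}: $E^c=\Delta^\perp$ at the equilibrium $(1+\mu_c)\1\in\Delta$, and $\T_{(x_c,\mu_c)}M = \Delta\oplus\R$ (the $\R$ being the parameter direction), so $(\D\iota_\mu E^c)\oplus \T M = (\Delta^\perp\oplus 0)\oplus(\Delta\oplus\R) = \R^3\oplus\R$ as required.

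With all seven hypotheses verified, Theorem~\ref{th:hopf} yields: for all (rescaled) $\mu>\mu_c=-0.25$, $f_\mu$ has a nonstationary periodic orbit contained in $(\cc\setminus M)_{\{\mu\}}\subset K_\mu\setminus\Delta$. Undoing the parameter reparametrization (which fixes a neighborhood of $[-0.25,0.5]$) gives a periodic orbit in $K_\mu$ for every $\mu\in(-0.25,0.5)$, which is the assertion of Theorem~\ref{th:sprott}. The main obstacle I anticipate is \emph{bookkeeping around hypothesis \eqref{item:th-hopf-5} and the parameter rescaling}: one must arrange the nonlinear reparametrization so that (a) it sends the bifurcation value $-0.25$ and the target interval into the correct place, (b) the resulting half-line of parameter values avoids the second Hopf point at $\mu=0.5$ and any other generalized center, and (c) the compactness and rotational estimates (Theorems~\ref{th:compact-container} and \ref{th:rot-rate-Kmu}), which are only proven for $\mu\leq 0.55$, still cover the full rescaled half-line used in the argument---this forces the existence conclusion to stop at $0.5$ rather than at $0.55$, and getting these constants to line up consistently is the delicate part. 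A secondary subtlety is checking \eqref{item:th-hopf-2} rigorously given that $K_\mu$ is not invariant: one must genuinely invoke boundedness of periodic orbits plus Theorem~\ref{th:compact-container} rather than a trapping-region argument. Everything else is a direct citation of the results already established in \S\ref{sec:compact-container}--\S\ref{sec:sprott-hopf}.
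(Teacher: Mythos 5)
Your overall route is the paper's: reparametrize $\mu$ so that Theorem~\ref{th:hopf} applies on a half-line, take $M=\Delta\times\R$, $\cf=\pm\pi_1^*\bigl(\tfrac{d\theta}{2\pi}\bigr)$, and feed in Theorems~\ref{th:compact-container}, \ref{th:rot-rate-Kmu} and \ref{th:hopf-sprott}. The genuine gap is in your choice of $\cc$. You take $\cc$ to be literally the union of the sets $K_{\mu}$ and then must argue, for hypothesis~\ref{item:th-hopf-2}, that no periodic orbit touches $\partial K_\mu$; but Theorem~\ref{th:compact-container} only places bounded trajectories in the \emph{closed} set $K_\mu$, and your parenthetical claim that a boundary point yields a contradiction is not substantiated (the curved face of $\partial K_\mu$ comes from comparison inequalities, and the equality case is not obviously excluded). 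More seriously, hypothesis~\ref{item:th-hopf-5} demands $(x_c,\mu_c)\in M\cap \interior(\cc)$, and the Hopf equilibrium $x_c=(1+\mu_c)\1$ satisfies $V(x_c)=3(1+\mu_c)$, i.e.\ it lies on the bounding plane $V^{-1}(3(1+\mu))\subset \partial K_{\mu_c}$; so with your $\cc$ the bifurcation point sits on $\partial\cc$, not in its interior, and you assert hypothesis~\ref{item:th-hopf-5} without checking this. The paper repairs both points with a single move you are missing: it first forms $\tilde{\cc}=\{(\x,\mu)\colon \mu\geq -1,\ \x\in K_{\varphi(\mu)}\}$, gets $-d\theta(g_\mu)>\tilde{\epsilon}$ there from Theorem~\ref{th:rot-rate-Kmu}, and then uses continuity to pass to a slightly larger $\cc$ with $\tilde{\cc}\subset\interior(\cc)$ on which the rotation bound and the compactness of the slices persist; then every periodic orbit (lying in $\tilde{\cc}$ by Theorem~\ref{th:compact-container}) automatically misses $\partial\cc$, and $(x_c,\mu_c)\in\interior(\cc)$, so hypotheses~\ref{item:th-hopf-2} and~\ref{item:th-hopf-5} hold without any boundary analysis of $K_\mu$.

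A second, smaller issue is the direction of your rescaling: mapping $(-1,0.55)$ onto $(-1,\infty)$ makes the rescaled family sweep through the original value $\mu=0.5$, and since $\mathbf{0}\in K_{0.5}$ the generalized center $(\mathbf{0},0.5)$ would then lie in $\cc_{[\mu^*,\infty)}$, violating hypothesis~\ref{item:th-hopf-5}. You flag this as bookkeeping, but the clean resolution (the paper's) is to rescale the other way: take an increasing diffeomorphism $\varphi\colon\R\to(-\infty,0.5)$ with $\varphi=\id$ on $(-\infty,0)$ and set $g_\mu\coloneqq f_{\varphi(\mu)}$, so the second Hopf value is never attained while the estimates of Theorems~\ref{th:compact-container} and \ref{th:rot-rate-Kmu} (valid for $\mu\leq 0.55$) cover the entire rescaled half-line; also note the paper simply takes $\mu^*=-1$ (where $K_{-1}=\{\mathbf{0}\}$ kills all nonstationary periodic orbits via Corollary~\ref{co:sprott-per-orbits-contained-Kmu}), whereas your ``$\mu^*$ slightly above $-1$'' has no supporting statement ruling out periodic orbits at that parameter. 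With these repairs your argument coincides with the paper's proof.
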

  \begin{Rem}\label{rem:classical-difficulty-sprott}
   	For the reasons explained in Remark~\ref{rem:classical-result-difficulties}, it seems very difficult to prove Theorem~\ref{th:sprott} directly using either of the classical continuation results Proposition~\ref{prop:p-global-cont} (\cite[Thm 4.2, Thm 2.2]{mallet1982snakes,alligood1983index}) or Proposition~\ref{prop:global-cont} (\cite[Thm~3.1]{alligood1984families}).
   \end{Rem}

   	\begin{proof}
   		Let $\varphi\colon \R\to (-\infty,0.5)$ be an increasing diffeomorphism satisfying $\varphi|_{(-\infty,0)} = \id_{(-\infty,0)}$ and $\lim_{s\to\infty}\varphi(s)  = 0.5$.
   		Letting $K_\mu$ be as in Theorem \ref{th:compact-container} and defining the family $g\coloneqq \R^3\times \R \to \R^3$ via $g_\mu \coloneqq f_{\varphi(\mu)}$, we will apply Theorem \ref{th:hopf} to show that $g_\mu$ has a periodic orbit contained in $K_{\varphi(\mu)}$ for all $\mu \in (-0.25,\infty)$.

   		Define $\tilde{\cc} \coloneqq \{(\x,\mu)\in \R^3\times \R\colon \mu \geq -1 \text{ and } \x \in K_{\varphi(\mu)} \}$.
   		Using the definition of $K_\mu$, it is easily seen that any set of the form $\tilde{\cc}_{[a,b]}\coloneqq \tilde{\cc}\cap (\R^3\times [a,b])\subset \R^4$ is closed and bounded, hence compact.
   		Letting the closed 1-form $d\theta$ be as defined in \S \ref{sec:dtheta}, Theorem \ref{th:rot-rate-Kmu} implies that there exists $\tilde{\epsilon} > 0$ such that $-d\theta(g_\mu) > \tilde{\epsilon}$ on $\tilde{\cc}\setminus (\Delta\times \R)$.
   		By continuity, there exists $\epsilon > 0$ and a set $\cc\subset \R^3\times \R$ slightly larger than $\tilde{\cc}$ satisfying $\tilde{\cc}\subset \interior(\cc)$, $-d\theta(g_\mu) > \epsilon$ on $\cc\setminus (\Delta\times \R)$, and with each set of the form $\cc_{[a,b]}\cap(\R^3\times [a,b])$ compact.
   		In particular, hypotheses \ref{item:th-hopf-3} and \ref{item:th-hopf-4} of Theorem \ref{th:hopf} are satisfied.\footnote{That $-\frac{d\theta}{2\pi}$ satisfies the relevant Poincar\'{e} duality hypotheses of Theorem \ref{th:hopf} follows exactly as in the proof of Theorem \ref{th:repress}, using the discussion in \S \ref{sec:dtheta} (after flipping the orientations of the submanifolds $M,N$ defined in the proof of Theorem \ref{th:repress} due to the minus sign).}
   		
		Corollary \ref{co:sprott-per-orbits-contained-Kmu} implies that $g_{-1}= f_{-1}$ has no nonstationary periodic orbits, so  hypothesis \ref{item:th-hopf-1} of Theorem \ref{th:hopf} is satisfied with $\mu^*\coloneqq -1$.
		It follows from Theorem \ref{th:compact-container} that every periodic orbit of $g|_{[-1,\infty)}$ is contained in $\tilde{\cc}\subset \interior(\cc)$, so in particular no periodic orbits of $g|_{[-1,\infty)}$ intersect $\partial \cc$; hence  hypothesis \ref{item:th-hopf-2} of Theorem \ref{th:hopf} is satisfied.
		We showed in \S \ref{sec:equilibria} and Theorem \ref{th:hopf-sprott} that $g$ has exactly one generalized center $(\x_c,\mu_c)\coloneqq (-0.75 \cdot \1,-0.25)\in \interior(\cc)$ at which $g$ undergoes a supercritical generic Hopf bifurcation.
   		Hence hypothesis \ref{item:th-hopf-5} of Theorem \ref{th:hopf} is satisfied.
   		Hypothesis \ref{item:th-hopf-6} is satisfied because $\Delta$ is an invariant manifold for each $g_{\mu}$ by symmetry (\S \ref{sec:symmetry}), and $\Delta$ is diffeomorphic to $\R$, so no periodic orbits can intersect $\Delta$.   		
   		Finally, the center subspace $E^c$ of $\D_{\x_c}g_{\mu_c}= \D_{\x_c}f_{\mu_c}$ is orthogonal to $\Delta$ by Equation \eqref{eq:Ec-eq-Delta-perp}, so hypothesis \ref{item:th-hopf-7} of Theorem \ref{th:hopf} is satisfied.
   		
   		Theorem \ref{th:hopf} now implies that $g_\mu$ has a periodic orbit contained in $K_{\varphi(\mu)}$ for all $\mu \in (-0.25,\infty)$.
   		Since $g_\mu = f_{\varphi(\mu)}$ by definition, it follows that $f_\mu$ has a periodic orbit contained in $K_\mu$ for all $\mu \in (-0.25,0.5)$.
   		This completes the proof.	
   	\end{proof}

    \subsection*{Acknowledgements}
    Kvalheim was supported by ARO award W911NF-14-1-0573 and by the ARO under the Multidisciplinary University Research Initiatives (MURI) Program, awards W911NF-17-1-0306 and W911NF-18-1-0327.
    Bloch was supported by NSF grant DMS-1613819 and AFOSR grant FA 0550-18-0028.  
    We would like to thank R. W. Brockett and H. L. Smith for valuable comments during the course of this work and J. Guckenheimer, E. Sander, and J. A. Yorke for useful discussions related to large-period phenomena. We would also like to thank S. Revzen for a suggestion regarding a calculation related to the repressilator and J. C. Sprott for information regarding the undamped version of his eponymous system.
    We thank the anonymous referee for useful suggestions about our exposition.

	\bibliographystyle{amsalpha}
	\bibliography{ref}
	
	\appendix
	\section{Closed $1$-forms}\label{app:closed-1-forms}
	For completeness, in this appendix we recall some standard results concerning closed $1$-forms.
	We follow portions of \cite[p.~35--37]{farber2004topology} nearly verbatim and refer the reader to \cite{deRham1984differentiable, bott1982differential, guillemin1974differential, lee2013smooth} in places for other details.
	The reader completely unfamiliar with differential forms may wish to consult \cite[Sec.~2.5,~2.7]{bloch2015nonholonomic} for a quick introduction to the basic definitions (of, e.g., $\wedge$, $d$, $\int$) with more details than we provide. 
	
	Let $M$ be a smooth manifold.
	A $C^{k\geq 0}$ \concept{differential $1$-form} (or simply \concept{$1$-form}) $\cf$ on $M$ is a $C^k$ section $M\to \T^* M$ of the cotangent bundle $\T^* M \to M$ \cite[Ch.~11]{lee2013smooth}; we will write $\cf_x$ instead of $\cf(x)$.
	In particular, given $x\in M$, the real-valued map $v\in \T_x M\mapsto \cf_x(v)\in \R$ is linear. 
	(Here $\T M\to M$ is the tangent bundle and $\T_x M$ is the tangent space to $M$ at $x$.)
	We will often simply write $\cf(v)$ instead of $\cf_x(v)$ for $v\in \T_x M$.
	Given a vector field $f$ on $M$, there is a map $x\in M \mapsto \cf(f(x))\in \R$ which we simply denote by $\cf(f)$.  
	In general, the context should make clear the precise meaning of any instance of $\cf(\slot)$.
	 
	In local coordinates $x_1,x_2,\ldots, x_n$ defined in an open subset $U\subset M$, any $C^k$ $1$-form $\cf$ is given by an expression of the form $\cf_x = a_1(x)dx_1 + a_2(x) dx_2 + \cdots + a_n(x) dx_n$, where $a_1(x), \ldots, a_n(x)$ are $C^k$ real-valued functions defined in $U$ and $dx_i(v) = v_i$ for a tangent vector $v = \sum_j v_j \frac{\partial}{\partial x_j}$.
	A $C^1$ $1$-form $\cf$ is called \concept{closed} if $d\cf = 0$, where $d$ is the \concept{exterior derivative} \cite[p.~365]{lee2013smooth}.
	In local coordinates, if $\cf  = \sum_{i=1}^n a_i dx_i$, then 
	$$d\cf = \sum_{i=1}^n da_i\wedge dx_i = \sum_{i,j=1}^n \frac{\partial a_i}{\partial x_j}dx_j \wedge dx_i = \sum_{i < j} \left(\frac{\partial a_j}{\partial x_i} - \frac{\partial a_i}{\partial x_j} \right) dx_i \wedge dx_j $$
	where $\wedge$ is the \concept{wedge product} \cite[p.~360]{lee2013smooth}.
	Hence the condition that $\cf$ is closed is equivalent to the equations
	$$\frac{\partial a_j}{\partial x_i} = \frac{\partial a_i}{\partial x_j}, \quad \textnormal{for all } i,j.$$
	A $C^k$ \concept{exact} $1$-form $\cf$ is one which can be represented globally as the differential $df$ of a $C^{k+1}$ function $f\colon M\to \R$. 
	If $\cf$ is a $C^1$ closed $1$-form, then for any simply connected open set $U\subset M$ the restriction $\cf|_U$ is an exact $1$-form.\footnote{This follows from (i) the Poincar\'{e} Lemma \cite[Thm~17.14]{lee2013smooth}, (ii) the De Rham Theorem \cite[Thm~18.14]{lee2013smooth}, and (iii) the fact that every $C^k$ closed $1$-form can be represented as the sum of a $C^\infty$ closed $1$-form with a $C^k$ exact $1$-form  \cite[pp.~61--70]{deRham1984differentiable}.} 
	If $U$ is connected, then the function $f_U$ is determined by $\cf|_U$ uniquely up to the addition of a constant.
	Thus, viewed locally, a closed $1$-form is the same thing as a real-valued function determined up to the addition of a constant.
	
	Given smooth manifolds $M_1, M_2$, a $C^1$ map $F\colon M_1\to M_2$, and a $1$-form $\cf$ on $M_2$, the \concept{pullback} $F^*\cf$ is the $1$-form on $M_1$ defined by the rule $F^*\cf(v) = \cf(\D_x F v)$ for $x\in M_1$ and $v\in \T_x M_1$, where $\D_x F\colon \T_x M_1 \to \T_{F(x)}M_2$ is the \concept{derivative} or \concept{tangent map} of $F$ at $x$ \cite[p.~360]{lee2013smooth}.  
	
	Given a $C^1$ $1$-form $\cf$ and piecewise-$C^1$ path $\gamma\colon [a,b]\to M$, the line integral $\int_{\gamma}\cf$ is well-defined.
	By Stokes's Theorem \cite[Thm~16.25]{lee2013smooth}, the condition that $\cf$ is closed is equivalent to the property that the integral $\int_\gamma \cf$ remains unchanged under any continuous homotopy of the path $\gamma$ with fixed end points.  
	
	The statement of Theorem~\ref{th:hopf} involves special cases of \concept{de Rham cohomology} and \concept{Poincar\'{e} duality}. 
	The \concept{first de Rham cohomology} of $M$ \cite[Ch.~17]{lee2013smooth} is the real quotient vector space $$\Hdr^1(M)\coloneqq \{\textnormal{$C^\infty$ closed $1$-forms}\}/\{\textnormal{$C^\infty$ exact  $1$-forms}\}.$$  
	The representative of a $C^\infty$ closed $1$-form $\cf$ in $\Hdr^1(M)$ is written $[\cf]\in \Hdr^1(M)$ and  is called the \concept{cohomology class} of $\cf$.
	Two $C^1$ closed $1$-forms $\cf_1$, $\cf_2$ are \concept{cohomologous} if $\cf_1 - \cf_2 = df$ for some $C^2$ function $f\colon M\to \R$. 
	Given any $C^{k\geq 1}$ closed $1$-form $\cf_1$, there exists a $C^\infty$ closed $1$-form $\cf_2$ which is cohomologous to $\cf_1$ \cite[pp.~61--70]{deRham1984differentiable}, so we may also define the cohomology class $[\cf_1]\in \Hdr^1(M)$ of $\cf_1$ to be the cohomology class $[\cf_2]\in \Hdr^1(M)$.
	
	Assume now that $M$ is oriented.
	Associated to any properly embedded, smooth, oriented, codimension-1 submanifold $N\subset M$, there is a cohomology class $[\cf]\in \Hdr^1(M)$ called the \concept{(closed) Poincar\'{e} dual} of $N$ \cite[pp.~50--53]{bott1982differential} satisfying the following property \cite[p.~69]{bott1982differential}: for any $\cf\in [\cf]$ and any $C^1$ embedding $\gamma\colon S^1\to M$ from the circle $S^1$ into $M$ which is transverse to $N$ (i.e., $\dot{\gamma}(t)\not \in \T_{\gamma(t)}N$ for all $t\in S^1$), $$\int_{\gamma}\cf = I(N,\gamma)\coloneqq \sum_{t\in \gamma^{-1}(N)} \epsilon_t. $$
	Here $\epsilon_t = +1$ (resp. $\epsilon_t = -1$) if a positively oriented basis of $\T_{\gamma(t)}N$ followed by the tangent vector $\dot{\gamma}(t)$ yields a positively (resp. negatively) oriented basis of $\T_{\gamma(t)}M$.
	$I(N,\gamma)$ is called the \concept{oriented intersection number} of $N$ with $\gamma$ \cite[p.~107]{guillemin1974differential}.
		
\end{document}